\newcommand{\neigh}{\mathcal{N}}
\newcommand{\coord}{{J}}
\newcommand{\rset}{\mathbb{R}}
\newcommand{\average}{\mathbb{E}}
\newcommand{\bo}{\mathbf}
\newcommand{\prox}{\text{prox}_{\Psi}}
\newcommand{\bl}{\color{black}}
\newtheorem{assumption}{Assumption}
\newtheorem{remark}{Remark}
\newenvironment{example}[1][{\normalfont\textit{Example.\theproposition.}}]
{\begin{trivlist}\item[\hskip \labelsep {\bfseries #1}]}{\hfill$\square$\end{trivlist}}
\title{ Parallel random coordinate descent method for
composite minimization: convergence analysis and  error bounds}
\author{ Ion Necoara, Dragos  Clipici
\thanks{I. Necoara and  D. Clipici are with  University  Politehnica Bucharest,
Automatic Control and Systems Engineering Department, 060042
Bucharest, Romania. {\tt\small \{ion.necoara,
dragos.clipici\}@acse.pub.ro}.}}
\date{\normalsize \textbf{November 2013}}
\begin{document}
\maketitle

\begin{abstract}
In this paper we {\bl consider} a parallel version of  a randomized
 (block) coordinate descent method for minimizing the sum of a
partially separable smooth convex  function and a fully separable
non-smooth convex function. Under the assumption of  Lipschitz
continuity of the gradient of the  smooth function, {\bl this method
has} a sublinear convergence rate. Linear convergence rate of the
method is obtained for the newly introduced class of
\textit{generalized error bound functions}. We prove that the new
class of generalized error bound functions encompasses both
global/local error bound functions and smooth strongly convex
functions.  We also show that the theoretical estimates on the
convergence rate depend on the number of blocks chosen randomly and
a natural measure  of  separability of the objective function.
Numerical simulations are also provided to confirm our theory.
\end{abstract}

\begin{keywords}
Random coordinate descent method,  parallel algorithm, partially
separable objective function, convergence rate, Lipschitz gradient,
error bound property.
\end{keywords}

\pagestyle{myheadings} \thispagestyle{plain} \markboth{I. Necoara
and   D. Clipici}{Parallel coordinate descent methods for composite
minimization}


\section{Introduction}
In recent years there has been an ever-increasing interest  in the
optimization community for algorithms suitable for solving convex
optimization problems with a very large number of variables. These
problems, known as big data problems, have arisen from more recent
fields such as network control \cite{NecClip:13a,NecSuy:09}, machine
learning \cite{Bis:06} and data mining \cite{WitFra:06}. An
important property of these problems  is that they are
\textit{partially separable}, which permits  parallel and/or
distributed computations in the optimization algorithms that are to
be designed for them {\bl \cite{NecClip:13a,RicTak:13}}. This,
together with the surge of multi-core machines or clustered parallel
computing technology in the past decade, has led to the widespread
focus on coordinate descent~methods.

\noindent   \textbf{State of the art}:  Coordinate descent methods
are  methods in which a number of (block) coordinates updates of
vector of variables are conducted at each iteration. The reasoning
behind this is that coordinate updates for problems with a large
number of variables are much simpler than computing a full update,
requiring less memory and computational power,  and that they can be
done independently, making coordinate descent methods more scalable
and suitable for distributed and parallel computing hardware.
Coordinate descent methods can be divided into  two main categories:
deterministic and random. In deterministic coordinate descent
methods, the (block) coordinates which are to be updated at each
iteration are chosen in a  cyclic fashion or based on some greedy
strategy. For cyclic coordinate search,  estimates on the rate of
convergence were given recently in \cite{BecTet:13, HonWan:13},
while for the greedy coordinate search the convergence rate is given
e.g. in \cite{TseYun:09, Tse:01}. On the other hand, in random
coordinate descent methods, the (block) coordinates which are to be
updated are chosen randomly based on some probability distribution.
In \cite{Nes:12}, Nesterov presents a random coordinate descent
method for smooth convex problems, in which only one coordinate is
updated at each iteration. Under some assumption  of Lipschitz
gradient and strong convexity of the objective function, the
algorithm in \cite{Nes:12} was proved to have linear convergence in
the expected values of the objective function. In
\cite{Nec:13,NecNes:11} a 2-block random coordinate descent method
is proposed to solve linearly constrained smooth  convex problems.
The algorithm from  \cite{Nec:13,NecNes:11} was extended to linearly
constrained composite convex minimization in \cite{NecPat:12}.   The
results in \cite{Nes:12} and \cite{Nes:13} were combined  in
\cite{RicTak:12,LuXia:13}, in which the authors propose a randomized
coordinate descent method to solve  composite convex problems. To
our knowledge, the first results on the  linear convergence of
coordinate descent methods under more relaxed assumptions than
smoothness and strong convexity were obtained e.g. in
\cite{TseYun:09, LuoTse:93}. In particular, linear convergence of
these methods is proved under some local error bound property, which
is more general than the assumption of Lipschitz gradient and strong
convexity as required in
\cite{Nes:12,Nec:13,NecNes:11,NecPat:12,RicTak:12}. However, the
authors in \cite{TseYun:09, LuoTse:93} were able to show linear
convergence only locally. Finally, very few results were known in
the literature on distributed and parallel implementations of
coordinate descent methods. {\bl Recently, a more thorough
investigation regarding the separability of the objective function
and  ways in which the convergence can be accelerated through
parallelization was undertaken   in
\cite{NecNes:11,NecClip:13a,RicTak:12a,PenYan:13}, where it is shown
that speedup can be achieved through this approach}. Several other
papers on parallel coordinate descent methods have appeared around
the time this paper was finalized
\cite{BraKyr:11,Nec:13,RicTak:13,TakBij:13}. 

\noindent \textbf{Motivation}:  Despite widespread use of coordinate
descent  methods for solving large convex problems, there are some
aspects that have not been fully studied. In particular, in
practical applications, the assumption  of Lipschitz gradient and
strong convexity is  restrictive and the main interest is in finding
larger classes of functions for which we can still prove linear
convergence. We are also interested in providing schemes based on
parallel and/or distributed computations. Finally, the convergence
analysis has been almost exclusively limited to centralized stepsize
rules and local convergence results.  These represent the main
issues that we pursue here.

\noindent  \textbf{Contribution}: In this paper {\bl we consider a
parallel version of  random (block) coordinate gradient descent
method \cite{Nes:12,NecPat:12,RicTak:12}} for solving large
optimization problems with a convex separable composite objective
function, i.e. consisting of the sum of a partially separable smooth
function and a fully separable non-smooth function. Our approach
allows us to analyze in the same framework several methods: full
gradient, serial random coordinate descent and any parallel random
coordinate descent method in between. {\bl Analysis of coordinate
descent methods based on updating in parallel  more than one (block)
component per iteration was given first in
\cite{NecNes:11,BraKyr:11,NecClip:13a} and then further studied e.g.
in  \cite{RicTak:12a,PenYan:13}.  We provide a  detailed rate
analysis for our parallel coordinate  descent algorithm under
general assumptions on the objective function, e.g. error bound type
property,  and under more knowledge on the structure of the problem
and we prove substantial improvement on the convergence rate w.r.t.
the existing results from the literature.}. In particular, we show
that this algorithm attains linear convergence for problems
belonging to a general class, named {\em generalized error bound
problems}. We establish that our class includes  problems with
global/local error bound objective functions and implicitly strongly
convex functions with some Lipschitz continuity property on the
gradient. We also show that the new class of problems that we define
in this paper covers many  applications in networks. Finally, we
establish that the theoretical estimates on the convergence rate
depend on the number of blocks chosen randomly and a natural measure
of separability of the objective function. In summary, the
contributions of this paper~include:


\noindent \textit{(i)} {\bl We employ a parallel version of
coordinate descent method   and show that the algorithm  has
sublinear convergence rate in the expected values of the objective
function}. For this algorithm, the iterate updates can be done
independently and thus it is suitable for  parallel and/or
distributed computing~architectures.

\noindent \textit{(ii)} We introduce a new class of  generalized
error bound problems  for which we show that it encompasses both,
problems with global/local error bound functions and smooth strongly
convex functions and that it covers many practical applications.

\noindent \textit{(iii)} Under the generalized error bound property,
we prove that our parallel random coordinate descent algorithm has a
global linear convergence rate.

\noindent \textit{(iv)} We also perform a theoretical
identification of which categories of problems and objective
functions satisfy the generalized error bound property.

\noindent \textbf{Paper Outline}: In Section \ref{sec_probdef} we
present our optimization model and discuss  practical applications
which can be posed in this framework. {\bl In Sections
\ref{sec_rcdm} and \ref{sec_smooth} we analyze the properties of a
parallel random coordinate descent algorithm, in particular  we
establish sublinear convergence rate, under some Lipschitz
continuity assumptions}. In Section \ref{sec_gebf} we introduce the
class of generalized error bound problems  and  we prove that {\bl
the random coordinate descent algorithm} has  global linear
convergence rate under this property. In Section
\ref{sec_gebfproperty} we investigate which classes of optimization
problems have an objective function that satisfies the generalized
error bound property. In Section \ref{sec_comparison} we discuss
implementation details of the algorithm  and compare it with other
existing methods. Finally, in Section 8 we present some preliminary
numerical tests on constrained lasso problem.


\section{Problem formulation}
\label{sec_probdef} In many big data applications arising from e.g.
networks,   control and data ranking, we have a system formed from
several entities, with a communication
 graph which indicates the interconnections between entities
(e.g.  sources and links in network optimization \cite{RamNed:09},
website pages in data ranking \cite{Bis:06} or subsystems in control
\cite{NecSuy:09}). We denote this \textit{bipartite graph} as
$G=([N] \times [\bar{N}],E)$, where $[N] =\left\{1,\dots,N\right\}$,
$[\bar{N}] =\left\{1,\dots,\bar{N}\right\}$ and $E \in
\left\{0,1\right\}^{{N} \times \bar{N}}$ is an incidence matrix. We
also introduce two sets of neighbors $\neigh_j$ and $\bar{\neigh}_i$
associated to the graph, defined as:
 \[ \neigh_j=\{ i \in [N]: \;  E_{ij} =1  \} \quad \forall j \in [\bar{N}]
~~ \text{and} ~~ \bar{\neigh}_i = \{j \in [\bar{N}]: \; E_{ij} =1 \}
\quad  \forall i \in [N]. \]

\noindent The index sets $\neigh_j$ and $\bar{\neigh}_i$, which e.g.
in the context of  network optimization  may represent  the set of
sources which share the link $j \in [\bar{N}]$ and  the set of links
which are used by the source $i \in [N]$,  respectively, describe
the local information flow in the graph. We denote the entire vector
of variables for the graph as $x \in \rset^n$. The vector $x$ can be
partitioned accordingly in block components $x_i \in \rset^{n_i}$,
with $n=\sum_{i=1}^N n_i$. In order to easily extract subcomponents
from the vector $x$, we consider a partition of the identity matrix
$I_n=\left [U_1 \dots U_N \right ]$, with $U_i \in \rset^{n \times
n_i}$, such that $x_i=U_i^T x$ and matrices $U_{\neigh_i}\in
\rset^{n \times n_{\neigh_i}}$, such that
$x_{\neigh_i}=U_{\neigh_i}^T x$, with $x_{\neigh_i}$ being the
vector containing all the components $x_j$ with $j \in \neigh_i$. In
this paper we address problems arising from such systems, where the
objective function can be written in a general form as (for similar
models  and settings, see {\bl
\cite{NecClip:13a,RicTak:12a,NiuRec:12,RamNed:09,TakBij:13,RicTak:13})}:

\begin{align}
 F^*=\min_{x \in \rset^n} F(x)  \quad  \left (= \sum_{j=1}^{\bar{N}} f_j
 (x_{\neigh_j}) + \sum_{i=1}^{N} \Psi_i(x_i) \right ), \label{gen_form}
\end{align}
where $f_j:\rset^{n_{\neigh_j}} \rightarrow \rset^{}$ and $\psi_i:
\rset^{n_i} \rightarrow \rset^{}$. We denote $f(x)=\sum_{j=1}^{\bar{N}} f_j
(x_{\neigh_j})$ and $\Psi(x)=\sum_{i=1}^N \Psi_i(x_i)$. The function
$f(x)$ is a smooth \textit{partially separable} convex function,
while $\Psi(x)$ is \textit{fully separable} convex  non-smooth function.
The local information structure imposed by the graph $G$ should be
considered as part of the problem formulation. We {\bl consider} the
following natural measure of separability of the objective function $F$:
\[ (\omega, \; \bar{\omega})
= (\max_{j \in [\bar{N}]} |\neigh_j |, \; \max_{i \in [N]} | \bar{\neigh}_i|).
  \]
Note that $1 \leq \omega \leq N$, $1 \leq  \bar{\omega} \leq
\bar{N}$ and the definition of the measure of separability $(\omega,
\; \bar{\omega})$ is more general than the one considered  in
\cite{RicTak:12a}  that is defined only in terms of $\omega$.  It is
important to note that coordinate gradient descent type methods for
solving problem \eqref{gen_form} are appropriate only in the case
when $ \bar{\omega}$ is relatively small, otherwise incremental type
methods \cite{RamNed:09,WanBer:13} should be considered for solving
\eqref{gen_form}. Indeed, difficulties may arise when $f$ is the sum
of a large number of component functions
 and $\bar{\omega}$ is  large, since in that case exact computation
of the components of   gradient (i.e. $\nabla_{i} f(x) = \sum_{j \in
\bar{\neigh}_i}  \nabla_i f_j(x_{\neigh_j})$) can be either very
expensive or impossible due to noise. In conclusion, {\bl we assume that the algorithm is employed for problems \eqref{gen_form},
 with $\bar{\omega}$ relatively small, i.e. $\bar{\omega}, \omega \ll n$}.

\noindent Throughout this paper, by $x^*$ we denote an optimal
solution of problem \eqref{gen_form} and by $X^*$ the set of optimal
solutions. We  define the index indicator function as:
$$\bo{1}_{\neigh_j}(i)=\begin{cases} 1,& \text{if} \; i \in \neigh_j \\ 0,
 &\text{otherwise}, \end{cases}$$
and the set indicator function as:
$$\bo{I}_X(x)=\begin{cases} 0,& \text{if} \; x \in X \\ +\infty,
 &\text{otherwise}. \end{cases}$$
 Also, by $\|\cdot\|$ we denote the standard Euclidean norm and
we introduce an additional norm $\|x\|_W^2=x^T Wx$, where $W \in \rset^{n \times n}$
 is a positive diagonal matrix.
  Considering these, we denote by $\Pi_X^W(x)$  the projection
of a point $x$ onto a set $X$ in the norm $\|\cdot\|_W$:
\begin{equation*}
\Pi_X^W(x)=\arg \min_{y \in X}  \| y-x\|_W^2.
\end{equation*}
Furthermore, for simplicity of exposition,  we denote by $\bar{x}$ the
projection of a point $x$ on the optimal set $X^*$, i.e.
$\bar{x}=\Pi_{X^*}^W(x)$. In this paper we consider that the smooth
component $f(x)$ of \eqref{gen_form} satisfies the following
assumption:

\begin{assumption}
\label{lip_fi}
We assume that the functions $f_j(x_{\neigh_j})$ have Lipschitz
continuous gradient with a constant $L_{\neigh_j}>0$:
\begin{equation}
\|\nabla f_j(x_{\neigh_j}) - \nabla f_j (y_{\neigh_j})\|  \leq
L_{\neigh_j} \|x_{\neigh_j} - y_{\neigh_jj}\| \quad \forall
x_{\neigh_j},y_{\neigh_j} \in \rset^{n_{\neigh_j}}.
\label{lipschitz_comp}
\end{equation}
\end{assumption}

\noindent Note that our assumption {\bl is  different from
the ones in} \cite{Nes:12,NecClip:13a,NecPat:12,RicTak:12a}, where the
authors consider that the gradient of the function $f$ is
coordinate-wise Lipschitz continuous, which states the following: if
we define the partial gradient $\nabla_i f(x)=U_i^T \nabla f(x)$,
then there exists some constants $L_i>0$ such that:
\begin{equation}
 \|\nabla_i f(x+U_i y_i) - \nabla_i f(x)\| \leq L_i
 \|y_i\| \quad \forall x \in \rset^n, \; y_i \in \rset^{n_i}.
 \label{lipschitz_grad}
\end{equation}
As a consequence of Assumption \ref{lip_fi} we have that
\cite{Nes:04}:
\begin{equation}\label{desc_fi}
 f_j(x_{\neigh_j} + y_{\neigh_j}) \leq f_j(x_{\neigh_j}) +
 \langle \nabla f_j(x_{\neigh_j}), y_{\neigh_j} \rangle +
 \frac{L_{\neigh_j}}{2} \|y_{\neigh_j}\|^2.
\end{equation}
Based on Assumption \ref{lip_fi} we can show  the following
distributed variant of the descent lemma, which is central in our
derivation of a parallel coordinate descent method and  in proving
the convergence rate for it.
\begin{lemma}
\label{lema_desc} Under Assumption \ref{lip_fi}  the following
inequality holds for the smooth part of the  objective function
$f(x)=\sum_{j=1}^N f_j(x_{\neigh_j})$:
\begin{equation}
 f(x+y)\leq f(x)+\langle \nabla f(x), y \rangle+
 \frac{1}{2}\|y\|_W^2\quad \forall x,y \in \rset^n
 \label{desc_lemma},
\end{equation}
where {\bl $W \succ 0$ is diagonal with its  blocks $W_{ii} \in
\rset^{n_i \times n_i}$, $W_{ii}= \sum\limits_{j \in
\bar{\neigh}_i}L_{\neigh_j} I_{n_i}, \;  i \in [N]$, and the
remaining blocks are zero}.
\end{lemma}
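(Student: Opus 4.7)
The plan is to derive the claimed inequality by applying the per-component descent inequality \eqref{desc_fi} to each $f_j$ at the point $x_{\neigh_j}$ with perturbation $y_{\neigh_j}$, then summing over $j \in [\bar{N}]$ and reorganizing both the linear and quadratic terms to express them in terms of the full vectors $y$ and $\nabla f(x)$.

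First I would write, for each $j \in [\bar{N}]$, the bound
\[
 f_j(x_{\neigh_j}+y_{\neigh_j}) \leq f_j(x_{\neigh_j}) + \langle \nabla f_j(x_{\neigh_j}), y_{\neigh_j}\rangle + \frac{L_{\neigh_j}}{2}\|y_{\neigh_j}\|^2,
\]
which is guaranteed by Assumption \ref{lip_fi} via \eqref{desc_fi}, and sum over $j$. The left-hand side is $f(x+y)$ and the constant part of the right-hand side is $f(x)$, by the partial-separability definition of $f$.

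Next I would identify $\sum_{j} \langle \nabla f_j(x_{\neigh_j}), y_{\neigh_j}\rangle$ with $\langle \nabla f(x), y\rangle$. The cleanest way is to expand $\|y_{\neigh_j}\|^2 = \sum_{i \in \neigh_j} \|y_i\|^2$ and $\langle \nabla f_j(x_{\neigh_j}), y_{\neigh_j}\rangle = \sum_{i \in \neigh_j} \langle \nabla_i f_j(x_{\neigh_j}), y_i\rangle$, then swap the order of summation using the equivalence $i \in \neigh_j \iff j \in \bar{\neigh}_i$, which is the only nontrivial combinatorial step. This gives
\[
 \sum_{j\in[\bar{N}]} \langle \nabla f_j(x_{\neigh_j}), y_{\neigh_j}\rangle = \sum_{i\in[N]} \Big\langle \sum_{j \in \bar{\neigh}_i} \nabla_i f_j(x_{\neigh_j}),\, y_i \Big\rangle = \langle \nabla f(x), y\rangle,
\]
where in the last equality I invoke the chain-rule identity $\nabla_i f(x) = \sum_{j \in \bar{\neigh}_i} \nabla_i f_j(x_{\neigh_j})$ implied by the partial separability of $f$.

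Finally, applying the same swap to the quadratic terms yields
\[
 \sum_{j\in[\bar{N}]} L_{\neigh_j}\|y_{\neigh_j}\|^2 = \sum_{i\in[N]} \Big(\sum_{j \in \bar{\neigh}_i} L_{\neigh_j}\Big)\|y_i\|^2 = \sum_{i\in[N]} y_i^T W_{ii}\, y_i = \|y\|_W^2,
\]
which is exactly the definition of $W$ in the lemma. Combining these three identifications produces the desired inequality \eqref{desc_lemma}. The only real obstacle is the double-sum reindexing based on the bipartite incidence relation $i \in \neigh_j \iff j \in \bar{\neigh}_i$; everything else is a direct assembly of \eqref{desc_fi}, the chain rule, and the definition of the weighted norm.
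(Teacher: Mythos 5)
Your proposal is correct and follows essentially the same route as the paper's proof: sum the per-component descent inequality \eqref{desc_fi} over $j$, identify the linear term with $\langle \nabla f(x), y\rangle$, and reindex the double sum over $(i,j)$ via $i \in \neigh_j \iff j \in \bar{\neigh}_i$ to recover $\frac{1}{2}\|y\|_W^2$. The only cosmetic difference is that the paper handles the linear term by lifting each $\nabla f_j(x_{\neigh_j})$ into $\rset^n$ with the embedding matrices $U_{\neigh_j}$, whereas you expand componentwise and invoke the identity $\nabla_i f(x) = \sum_{j \in \bar{\neigh}_i}\nabla_i f_j(x_{\neigh_j})$; these are equivalent bookkeeping for the same fact.
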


\begin{proof}
If we sum up \eqref{desc_fi} for $j \in [\bar{N}]$ and by the definition
of $f$ we have that:
\begin{equation}
f(x+y)\leq f(x)+\sum_{j=1}^{\bar{N}} \left [ \langle  \nabla
f_j(x_{\neigh_j}), y_{\neigh_j} \rangle + \frac{L_{\neigh_j}}{2}
\|y_{\neigh_j}\|^2 \right ]. \label{lip_basic}
\end{equation}
Given matrices $U_{\neigh_j}$, note that we can express the first
term in the right hand side as:
\begin{align*}
\sum_{j=1}^{\bar{N}} \left  \langle \nabla f_j(x_{\neigh_j}), y_{\neigh_j}
\right \rangle &\!=\!\sum_{j=1}^{\bar{N}}  \left \langle \nabla
f_j(x_{\neigh_j}), U_{\neigh_j}^T y \right \rangle=\sum_{j=1}^{\bar{N}}
\left \langle  U_{\neigh_j} \nabla f_j(x_{\neigh_j}), y \right
\rangle\!=\!\langle \nabla f(x), y\rangle.
\end{align*}
Note that since $W$ is a diagonal matrix we can express  the  norm
$\|\cdot\|_W$  as:
\begin{equation*}
\|y\|_W^2= \sum_{i=1}^N  \left (\sum_{j \in \bar{\neigh_i}}
L_{\neigh_j} \right ) \|y_i\|^2.
\end{equation*}

\noindent From the definition of $\neigh_j$ and $\bar{\neigh_i}$,
note that $\bo{1}_{{\neigh}_j}(i)$ is equivalent to
$\bo{1}_{\bar{\neigh}_i}(j)$. Thus, for the final term of the right
hand side of \eqref{lip_basic} we have that:
\begin{align*}
\frac{1}{2} \sum_{j=1}^{\bar{N}}  L_{\neigh_j} \|y_{\neigh_j}\|^2 &
=\frac{1}{2} \sum_{j=1}^{\bar{N}}  L_{\neigh_j} \sum_{i \in \neigh_j} \|y_i\|^2
= \frac{1}{2}  \sum_{j=1}^{\bar{N}} L_{\neigh_j} \sum_{i=1}^N \|y_i\|^2
\bo{1}_{\neigh_j}(i) \\
&=\frac{1}{2}  \sum_{i=1}^N \|y_i\|^2 \sum_{j=1}^{\bar{N}}
 L_{\neigh_j} \bo{1}_{\bar{\neigh}_i}(j) = \frac{1}{2} \sum_{i=1}^N \|y_i\|^2
\sum_{j \in \bar{\neigh_i}} L_{\neigh_j}=\frac{1}{2} \|y\|_W^2,
\end{align*}
and the proof is complete.
\end{proof}

\noindent {\bl Note that   the convergence results of this paper
hold for any descent lemma in the form   \eqref{desc_lemma} and thus
the expression of the  matrix $W$ above can be replaced with any
other   block-diagonal matrix $W \succ 0$ for which
\eqref{desc_lemma}  is valid.  Based on \eqref{lipschitz_grad} a
similar inequality as in \eqref{desc_lemma} can be derived, but the
matrix $W$ is replaced in this case with the matrix $\omega
W'=\omega \text{diag}(L_i I_{n_i}; i \in [N])$ \cite{RicTak:12a}.
These differences in the matrices will lead to different step sizes
in the algorithms of our paper and of e.g.
\cite{NecClip:13a,RicTak:12a}}.  Moreover, from the generalized
descent lemma through the norm $\|\cdot\|_W$, the sparsity induced
by the graph $G$ via the sets $\neigh_j$ and $\bar{\neigh}_i$ and
implicitly via the measure of separability $(\omega, \bar{\omega})$
will intervene in the estimates for the convergence rates of the
algorithm. A detailed discussion on this issue can be found in
Section \ref{sec_comparison}.   The following lemma establishes
Lipschitz continuity for $\nabla f$ but in the norm $\|\cdot\|_W$,
whose proof can be derived using similar arguments as in
\cite{Nes:04}:

\begin{lemma}
For a function  $f$ satisfying  Assumption \ref{lip_fi} the
following  holds:
\begin{equation}
 \|\nabla f(x)-\nabla f(y)\|_{W^{-1}} \leq \|x-y\|_W
 \quad  \forall x,y \in \rset^n. \label{gradient_lip}
\end{equation}
\end{lemma}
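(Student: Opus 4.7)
The plan is to follow the standard Nesterov-style argument that turns a descent lemma into a co-coercivity estimate for the gradient and then, via Cauchy--Schwarz, into a Lipschitz bound in the dual norm. The two ingredients we have in hand are that $f$ is convex (as a sum of convex functions $f_j$) and that Lemma \ref{lema_desc} gives
\[
f(u+z) \leq f(u) + \langle \nabla f(u), z\rangle + \tfrac{1}{2}\|z\|_W^2 \qquad \forall\, u, z \in \rset^n,
\]
with $W\succ 0$ diagonal, hence $W^{-1}$ is well defined and $\|\cdot\|_{W^{-1}}$ is the dual norm to $\|\cdot\|_W$.

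First I would fix $x\in\rset^n$ and introduce the auxiliary function $\phi(u)=f(u)-\langle \nabla f(x), u\rangle$, which is convex, has the same Hessian-type behavior as $f$ (so the descent lemma applies to $\phi$ with the same $W$), and attains its global minimum at $u=x$ since $\nabla\phi(x)=0$. Next I would apply the descent lemma to $\phi$ at an arbitrary $y$: for every $z$,
\[
\phi(y+z) \leq \phi(y) + \langle \nabla \phi(y), z\rangle + \tfrac{1}{2}\|z\|_W^2.
\]
Since the right-hand side is a strictly convex quadratic in $z$ (because $W\succ 0$), I would minimize it in closed form by taking $z^\star = -W^{-1}\nabla\phi(y)$, obtaining
\[
\phi(x) \leq \min_{z}\phi(y+z) \leq \phi(y) - \tfrac{1}{2}\|\nabla\phi(y)\|_{W^{-1}}^2.
\]
Substituting the definition of $\phi$ and rearranging gives
\[
\tfrac{1}{2}\|\nabla f(y)-\nabla f(x)\|_{W^{-1}}^2 \leq f(y)-f(x) - \langle \nabla f(x), y-x\rangle.
\]

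Then I would swap the roles of $x$ and $y$ and add the two inequalities. The function values cancel and one gets the co-coercivity estimate
\[
\|\nabla f(x)-\nabla f(y)\|_{W^{-1}}^2 \leq \langle \nabla f(x)-\nabla f(y),\, x-y\rangle.
\]
Finally, applying the generalized Cauchy--Schwarz inequality $\langle u,v\rangle \leq \|u\|_{W^{-1}}\|v\|_W$ to the right-hand side and dividing through by $\|\nabla f(x)-\nabla f(y)\|_{W^{-1}}$ (the case when this quantity is zero being trivial) yields the claim.

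The main obstacle is really just the bookkeeping in step one: checking that $\phi$ inherits the descent lemma with the same matrix $W$ (immediate since $\phi$ differs from $f$ only by a linear term, which has no effect on the quadratic upper model) and that $x$ is actually the minimizer of $\phi$ (which needs convexity of $f$, not just Lipschitz gradient). Once these two convexity-based facts are in place, the remaining steps are routine and do not use anything beyond $W\succ 0$ diagonal and Cauchy--Schwarz in the pair of dual norms $\|\cdot\|_W,\|\cdot\|_{W^{-1}}$.
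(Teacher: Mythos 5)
Your proof is correct and is exactly the standard Nesterov-style argument the paper has in mind: the paper omits the proof, remarking only that it ``can be derived using similar arguments as in \cite{Nes:04}'', and your chain (auxiliary function $\phi$, descent lemma minimized in $z$, symmetrization to co-coercivity, Cauchy--Schwarz in the dual pair $\|\cdot\|_W$, $\|\cdot\|_{W^{-1}}$) is precisely that derivation adapted to the weighted norm. You also correctly flag the one place where convexity of $f$ (and not just Lemma \ref{lema_desc}) is genuinely needed, namely that $x$ minimizes $\phi$.
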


\subsection{Motivating practical applications}
\label{sub_sec_motiv} We now present  important applications from
which the interest for problems of type \eqref{gen_form} stems.

\noindent \textit{Application I}: One specific example is the sparse
logistic regression problem. This type of problem is often found in
data mining or machine learning, see e.g.
\cite{RyaSup:10,YuaCha:10}. In a training set $\left \{
a^j, b^j \right \}$, with $j \in [\bar{N}]$, the vectors $a^j \in
\rset^{n}$ represent $\bar{N}$ samples, and $b^j$ represent the
binary class labels with $b^j \in \left \{-1,+1 \right \}$. The
likelihood function for these $\bar{N}$ samples is:
\begin{equation*}
\sum_{j=1}^{\bar{N}} \mathbb{P} (b^j|a^j) ,
\end{equation*}
where $ \mathbb{P}(b|a)$ is the conditional probability and is expressed as:
\begin{equation*}
 \mathbb{P}(b|a)=\frac{1}{1+\exp(-b \langle a,x \rangle)},
\end{equation*}
with $x \in \rset^{n}$ being the weight vector. In some applications
(see e.g. \cite{YuaCha:10}), we require a bias term $c$ (also called
as an intercept) in the loss function; therefore, $\langle a,x
\rangle$ is replaced with $\langle a,x \rangle + c$. The equality
$\langle a, x\rangle=0$ defines a hyperplane in the feature space on
which $\mathbb{P}(b|a)=0.5$. Also, $\mathbb{P}(b|a)>0.5$ if $\langle
a,x \rangle>0$ and $\mathbb{P}(b|a)<0.5$ otherwise. Then, the sparse
logistic regression can be formulated as the following convex problem:
\begin{equation*}
 \min_{x \in \rset^{n}} f(x)+\lambda \left \|x\right \|_1,
\end{equation*}
where $\lambda>0$ is some constant and
$f(x)$ is the average logistic loss function:
\begin{equation*}
 f(x) =  - \frac{1}{\bar{N}} \sum_{i=1}^{\bar{N}}  \log \left (\mathbb{P}(b^j
 | a^j) \right) =  \frac{1}{\bar{N}} \sum_{j=1}^{\bar{N}} \log \left ( 1+\exp
 \left (-b^j \langle {a}^j, x \rangle  \right ) \right ).
\end{equation*}

\noindent Note that $\Psi(x)=\lambda \|x\|_1$, where $\|\cdot\|_1$ denotes the 1-norm, is the separable
non-smooth component which promotes the sparsity of the decision
variable $x$. If we associate to this problem a bipartite graph $G$
where the incidence matrix $E$ is defined such that $E_{ij} =1$
provided that ${a}^j_i \not=0$, then the vectors ${a}^j$ have a
certain sparsity according to this graph, i.e. they only have
nonzero components in ${a}^j_{\neigh_j}$. Therefore,  $f(x)$ can be
written as $f(x)=\sum_{j=1}^{\bar{N}} f_j(x_{\neigh_j})$, where each function $f_j$ is defined as:
\[   f_j(x_{\neigh_j}) =  \frac{1}{\bar{N}} \log \left ( 1+\exp
 \left (-b^j \langle {a}^j_{\neigh_j}, x_{\neigh_j} \rangle  \right ) \right ). \]
It can be easily proven that the objective function $f$ in this case
satisfies \eqref{lipschitz_comp} with $L_{\neigh_j}= \sum_{l \in
{\neigh}_j} \|{a}^j_l\|^2/4$ and \eqref{lipschitz_grad} with
$L_i=\sum_{j \in \bar{\neigh}_i} \|{a}^j_i\|^2/4$. Furthermore, we have that $f$ satisfies \eqref{desc_lemma} with
matrix $W=\text{diag}\left (\sum_{j \in
\bar{\neigh}_i} \|{a}^j_{\neigh_j}\|^2/4; \; i \in [n] \right )$.

\noindent \textit{Application II}: Another classic problem  which
implies functions $f_j$ with Lipschitz continuous gradient of type
\eqref{lipschitz_comp} is:
\begin{equation}
 \min_{x_i \in X_i \subseteq \rset^{n_i}} F(x) \quad
 \left (= \frac{1}{2} \| Ax-b\|^2 +\sum_{i=1}^N  \lambda_i \|x_i\|_1 \right),
 \label{sol_norm}
\end{equation}
where $A \in \rset^{\bar{N} \times n}$, the sets $X_i$  are convex,
$n= \sum_{i=1}^N n_i$ and $\lambda>0$. This problem is also known as
the constrained lasso problem \cite{JamPau:13} and is widely used
e.g. in signal processing, fused  or generalized lasso and monotone
curve estimation  \cite{JamPau:13,CheNg:12} or distributed model
predictive control \cite{NecClip:13a}. For example, in image
restoration, incorporating a priori information (such as box
constraints on $x$) can lead to substantial improvements in the
restoration and reconstruction process (see \cite{CheNg:12} for more
details).
Note that this problem is a special case of problem
\eqref{gen_form}, with $\Psi(x)= \sum_{i=1}^N [\lambda_i \|x_i\|_1 +
\bo{I}_{X_i}(x_i)] $ being block separable and with the functions $f_j$
defined as:
\begin{equation*}
 f_j(x_{\neigh_j})=\frac{1}{2} (a_{\neigh_j}^T x_{\neigh_j} - b_j)^2,
\end{equation*}
where $a_{\neigh_j}$ are the nonzero components of row  $j$ of $A$,
corresponding to $\neigh_j$. In this case, functions
$f_j$ satisfy \eqref{lipschitz_comp} with
$L_{\neigh_j} = \|a_{\neigh_j}\|^2$.  Given these constants, we find
that $f$ in this case satisfies \eqref{desc_lemma} with the matrix
$W=\text{diag}\left ( \sum_{j \in \bar{\neigh}_i} \|a_{\neigh_j}\|^2
I_{n_i}; \; i \in [N] \right)$. Also, note that functions of type
\eqref{sol_norm} satisfy Lipschitz continuity \eqref{lipschitz_grad}
with $L_i=\|A_i\|^2$, where $A_i \in \rset^{\bar{N} \times n_i}$
denotes block column $i$ of the matrix~$A$.

\noindent \textit{Application III}:  A third type of problem which
falls under the  same category is derived from the following  primal
formulation:
\begin{align}
 f^* =&\min_{u \in \rset^m} \; \sum_{j=1}^{\bar{N}} g_j(u_j), \label{prob_sum} \\
&\; \text{s.t:} \quad A u \leq b, \nonumber
\end{align}
where $A \in \rset^{n \times m},  u_j \in \rset^{m_j}$ and the
functions $g_j$ are strongly convex with convexity parameters
$\sigma_j$. This type of problem is often found in network control
\cite{NecSuy:09}, network optimization or  utility maximization
\cite{RamNed:09}. In all these applications matrix $A$ is very
sparse, i.e. both $(\omega, \bar \omega)$ are small. We formulate
the dual problem of \eqref{prob_sum} as:
\begin{equation*}
 \max_{x \in \rset^n} \left[ \min_{u \in \rset^m}
 \sum_{j=1}^{\bar{N}} g_j(u_j) + \langle x, Au - b \rangle \right] - \Psi(x),
\end{equation*}
where $x$ denotes the Lagrange multiplier and  $\Psi(x)=\bo{I}_{\rset^n_+}(x)$ is the indicator function for the nonnegative orthant $\rset^n_+$. By denoting by $g_j^*(z)$ the convex conjugate of the function  $g_j(u_j)$, the previous problem can be rewritten as:
\begin{align}
 f^*=& \max_{x \in \rset^n}  \left[   \sum_{j=1}^{\bar{N}}   \min_{u_j \in \rset^{m_j}}
  g_j(u_j) - \left \langle -A_j^T x , u_j \right \rangle \right] - \langle x, b \rangle - \Psi(x)
   \nonumber \\
=& \max_{x \in \rset^n} \sum_{j=1}^{\bar{N}}
  -g_j^*(-A_j^T x) -\langle x, b \rangle - \Psi(x), \label{conv_conjug}
\end{align}
where $A_j \in \rset^{n \times m_j}$ is the  $j$th block  column of
$A$. Note that, given the strong convexity of $g_j(u_j)$, then
the functions $g_j^*(z)$ have Lipschitz continuous  gradient  in $z$ of type
\eqref{lipschitz_comp} with constants $\frac{1}{\sigma_j}$
\cite{Nes:04}. Now, if the matrix $A$ has some sparsity induced by a
graph, i.e. the blocks $A_{ij}=0$ if the corresponding incidence matrix has $E_{ij}=0$, which in turn implies that the block columns $A_j$ are sparse
according to some index set $\neigh_j$, then the matrix-vector
products $A_j^T x$ depend only on $x_{\neigh_j}$, such that
$f_j(x_{\neigh_j}) = -g_j^*\left ( -A_{\neigh_j}^T x_{\neigh_j}
\right ) - \langle x_{\neigh_j}, \bar{b}_{\neigh_j} \rangle$, with
$\sum_j \langle x_{\neigh_j}, \bar{b}_{\neigh_j} \rangle  = \langle
x, b \rangle$. Then, $f_j$ has Lipschitz continuous gradient  of type
\eqref{lipschitz_comp} with $L_{\neigh_j} =
\frac{\|A_{\neigh_j}\|^2}{\sigma_j}$. For this problem we also have componentwise
Lipschitz continuous  gradient of type \eqref{lipschitz_grad} with
$L_i=\sum_{j \in \bar{\neigh}_i} \frac{\|A_{ij}\|^2}{\sigma_j}$. Furthermore, we
 find that  in this case $f$ satisfies \eqref{desc_lemma} with
$W = \text{diag}\left ( \sum_{j \in \bar{\neigh}_i}
\frac{\|A_{\neigh_j}\|^2}{\sigma_j}; \; i \in [n] \right)$. Note
that there are many applications  in distributed control or network
optimization where  $\omega$ or $ \bar \omega$ or both are small.
E.g., one particular application that appears in the area of network
optimization has the structure \eqref{prob_sum}, where the matrix
$A$  has column linked block angular form, i.e. the matrix $A$ has a
block  structure of the following form:
\begin{equation*}
 A=\left [ \begin{matrix}
   A_{11} & \\ A_{21} & A_{22} \\ A_{31} & & A_{33} \\ A_{n-1,n} &  & & A_{n-1,n-1} \\  A_{n1} &  & & & A_{nn}
 \end{matrix} \right ].
\end{equation*}
One of the standard distributed algorithms to solve network problems
is  based on a dual decomposition as explained above. In this case,
by denoting $g_j^*(z)$ the convex conjugate of the function
$g_j(u_j)$, the corresponding  problem can be rewritten as:
\begin{align}
 f^*= & \max_{x \in \rset^n}  \left[-g_1^*(-A_1^T x) - \sum_{j=2}^{\bar{N}}
  -g_j^*(-A_{jj}^T x_j) -\langle x, b \rangle - \Psi(x) \right].
\end{align}
If we consider the block columns of dimension $1$, then
$g_1^*(-A_1^T x)$ depends on all the coordinates of $x$, i.e.
$\omega=\bar{N}$. On the other hand, note that given the structure
of $A$, we have that $\bar{\omega}=2 \ll \omega = \bar N$. The
reader can easily find many other examples of objective functions
where $\bar{\omega} \ll \omega$.


\section{Parallel  random coordinate descent method}
\label{sec_rcdm} In this section we {\bl consider} a  parallel
version of the random coordinate descent method
\cite{Nes:12,NecPat:12,RicTak:12}, which we call \textbf{P-RCD}.
{\bl Analysis of coordinate descent methods based on updating in
parallel  more than one (block) component per iteration was given
first in \cite{NecNes:11,BraKyr:11,NecClip:13a} and then further
studied  e.g. in  \cite{RicTak:12a,PenYan:13}. In particular, such a
method and its convergence properties has  been analyzed in
\cite{NecClip:13a,RicTak:12a}, but under the coordinate-wise
Lipschitz assumption \eqref{lipschitz_grad}.} Before we {\bl
discuss} the method however, we first need to introduce some
concepts. For a function $F(x)$ as defined in \eqref{gen_form}, we
introduce the following mapping in norm $\|\cdot\|_W$:
\begin{equation}
 t_{[N]}(x,y)=f(x)+\langle \nabla f(x),y-x \rangle+\frac{1}{2}\|y-x\|^2_W+\Psi (y). \label{comp_grad}
\end{equation}
Note that the mapping $t_{[N]}(x,y)$ is a fully separable and strongly
convex in $y$ w.r.t. to the norm $\|\cdot\|_W$  with the constant 1.
We denote  by $T_{[N]}(x)$ the proximal step for function $F(x)$,
which is the optimal point of the mapping  $t_{[N]}(x,y)$, i.e.:
\begin{equation}\label{prox_optim}
T_{[N]}(x)=\arg \min_{y \in \rset^n} t_{[N]}(x,y).
\end{equation}
The proximal step $T_{[N]}(x)$ can also be defined in another way.
We define the proximal operator of function $\Psi$ as:
\begin{equation*}
 \prox(x)=\arg \min_{u \in \rset^n} \Psi(u)+\frac{1}{2} \|u-x\|_W^2.
\end{equation*}

\noindent We recall an important property of the proximal operator
\cite{RocWet:98}:
\begin{equation}
 \|\prox(x)-\prox(y)\|_W \leq \|x-y\|_W. \label{prox_property1}
\end{equation}

\noindent Based on this operator, note that we can write:
\begin{equation}
 T_{[N]}(x)=\prox(x-W^{-1} \nabla f(x)).  \label{T_prox}
\end{equation}
Given that $\Psi(x)$ is generally not differentiable, we  denote by
$\partial_\Psi(x)$ a vector belonging to the set of subgradients of
$\Psi(x)$. Evidently, in both definitions, the optimality conditions
of the resulting problem from which we obtain $ T_{[N]}(x)$ are the
same, i.e.:
\begin{equation}
0 \in \! \nabla f(x)\!+\!W(T_{[N]}(x)\!-\!x)\!+\!\partial_{\Psi}
(T_{[N]}(x)). \label{prox_cond}
\end{equation}

\noindent It will become evident  further on that the optimal
solution $T_{[N]}(x)$ will play a crucial role in the parallel
random coordinate descent method. We now establish some properties
which involve the function $F(x)$, the mapping $t_{[N]}(x,y)$ and
the proximal step  $T_{[N]}(x)$. {\bl Given that $t_{[N]}(x,y)$ is
strongly convex in $y$ and that $T_{[N]}(x)$ is an optimal point
when minimizing over $y$, we have the following inequality:}
\begin{align}
F(x)-t_{[N]}(x,T_{[N]}(x))&=t_{[N]}(x,x)-t_{[N]}(x,T_{[N]}(x)) \nonumber \\
&{\geq} \frac{1}{2} \|x-T_{[N]}(x)\|_W^2. \label{y_desc}
\end{align}
Further, since $f$ is convex and differentiable and by
definition of $t_{[N]}(x,y)$ we get:
\begin{align}
 t_{[N]}(x,T_{[N]}(x))&\leq \min_{y \in \rset^n} f(y)+\Psi (y)+\frac{1}{2}\|y-x\|_W^2 \nonumber \\
&=\min_{y \in \rset^n} F(y)+\frac{1}{2}\|y-x\|_W^2. \label{prox_h}
\end{align}

\noindent In the algorithm that we discuss, at a step $k$, the
components of the iterate $x^k$ which are to be updated are dictated
by a set of indices $\coord^k \subseteq [N]$ which is randomly
chosen.  Let us denote by $x_{\coord} \in \rset^n$  the vector whose
blocks $x_i$, with $i \in \coord \subseteq [N]$, are identical to
those of $x$, while the remaining blocks are zeroed out, i.e.
$x_{\coord} = \sum_{i \in \coord} U_i x_i$ or:
\begin{equation}
x_{\coord}=
 \begin{cases}
 x_i, & \; i \in \coord \\
 0, & \; \text{otherwise.}
 \end{cases}
\end{equation}
Also, for the separable function $\Psi(x)$, we denote the partial sum $\Psi_{\coord}(x) \!=\! \sum_{i \in \coord} \Psi_i(x_i)$ and the vector $\partial_{\coord} \Psi(x)=[\partial \Psi(x)]_{\coord} \in \rset^{n}$. A random variable $\coord$ is uniquely characterized by the probability density function:
\begin{equation*}
 P_{\hat{\coord}}=P(\coord=\hat{\coord}) ~~ \text{where} ~~ \hat{\coord} \subseteq [N].
\end{equation*}
For the random variable  $\coord$, we also define the probability with which a subcomponent $i \in [N]$ can be found in $\coord$ as:
\begin{equation*}
 p_i=\mathbb{P}(i \in \coord).
\end{equation*}
In our algorithm, we consider a uniform sampling of $\tau$ unique coordinates $i$, $1\leq \tau \leq N$ that make up $\coord$, i.e. $|\coord|=\tau$.
For a random variable $\coord$ with $|\coord|=\tau$, we observe that we have a total number of $\binom{N}{\tau}$ possible values that $\coord$ can take, and with the uniform sampling we have that $P_{\coord}=\frac{1}{\binom{N}{\tau}}$.
Given that $\coord$ is random, we can express $p_i$ as:
\begin{equation*}
 p_i=\sum_{\coord: \; i \in \coord } P_{\coord}.
\end{equation*}
For a single index $i$, note that we have a total number of $\binom{N-1}{\tau-1}$ possible sets that $\coord$ can take which will include $i$ and therefore the probability that this index is in $\coord$ is:
\begin{equation}
p_i=\frac{\binom{N-1}{\tau-1}}{\binom{N}{\tau}}=\frac{\tau}{N}. \label{prob_i}
\end{equation}

\begin{remark}
\label{remark_probabilities}
We can  also consider other ways in which $\coord$ can be chosen.
For example,  we can have partition sets $\coord^1, \dots,
\coord^{q}$ of $[N]$, i.e. $[N]=\bigcup_{i=1}^q \coord^i$, that are
randomly shuffled. We can  choose $\coord$ in a nearly
independent manner, i.e. $\coord$ is chosen with a sufficient
probability, or we can choose $\coord$ according to an irreducible
and aperiodic Markov chain, see e.g. \cite{TseYun:09,WanBer:13}.
However, if we  employ these strategies for choosing $\coord$,  the
proofs for the convergence rate of our algorithm follow similar~lines.
\end{remark}

\vspace{5pt}

\noindent Having defined the proximal step as $T_{[N]}(x^k)$ in
\eqref{prox_optim}, in the algorithm that follows we generate
randomly at step $k$  an index set $\coord^k$  of cardinality $1\leq
\tau \leq N$. We denote the vector $T_{\coord^k}(x^k)=[T_{[N]}
(x^k)]_{\coord^k}$ which will be used to update $x^{k+1}$, i.e. in
the sense that $[x^{k+1}]_{\coord^k}=T_{\coord^k}(x^k)$. Also, by
$\bar{\coord}^k$ we denote the complement set of $\coord^k$, i.e.
$\bar{\coord}^k=\{i \in [N]: i \notin \coord^k \}$. Thus, the
algorithm consists of the following steps:
\begin{center}
\framebox[\textwidth]{
\parbox{\textwidth}{
\begin{center}
\textbf{ Parallel random  coordinate descent method
(P-RCD)}
\end{center}
\begin{enumerate}
\item Consider an initial point $x^0 \in \rset^n$ and $1 \leq \tau \leq N$
\item For $k\geq 0$:
\begin{itemize}
\item[2.1] Generate  with uniform probability a random set of indices $\coord^k \subseteq [N]$, \\ with $|\coord^k|=\tau$
\item[2.2] Compute:
\end{itemize}
\begin{equation*}
x^{k+1}_{\coord^k}=T_{\coord^k}(x^k) \; \text{and} \; x^{k+1}_{\bar{\coord}^k}=x^k_{\bar{\coord}^k}.
\end{equation*}
\end{enumerate}
}}
\end{center}
Note that the iterate update of \textbf{(P-RCD}) method can also be expressed as:
\begin{align}
\begin{cases}
x^{k+1}=x^k+T_{\coord^k}(x^k)-x^k_{\coord^k} \\
 x^{k+1}=\arg \min_{y \in \rset^n} \langle \nabla_{\coord^k} f(x^k), y-x^k \rangle +\frac{1}{2} \|y-x^k\|_W^2+\Psi_{\coord^k} (y) \\
 x^{k+1}=\text{prox}_{\Psi_{\coord^k}}\left (x^k-W^{-1} \nabla_{\coord^k} f(x^k) \right ).
\end{cases} \label{iter_pcdm2}
\end{align}
Note that the right hand sides of the last two equalities contain the  same optimization problem whose optimality conditions are:
\begin{equation}
 \begin{cases}
W[x^k-x^{k+1}]_{\coord^k} \in \nabla_{\coord^k} f(x^k)+\partial \Psi_{\coord^k} (x^{k+1}) \\
[x^{k+1}]_{\bar{\coord}^k}=[x^k]_{\bar{\coord}^k}.
 \end{cases} \label{opt_prox_3}
\end{equation}

\noindent Clearly, the optimization  problem from which we compute
the iterate of \textbf{(P-RCD)} is fully separable. Then, it follows
that for updating component $i \in \coord^k$ of $x^{k+1}$ we need
the following data: $\Psi_i(x_i^k), W_{ii}$ and $\nabla_i f(x^k)$.
However, the $i$th diagonal entry $W_{ii} = \sum_{j \in
\bar{\neigh}_i} L_{\neigh_j}$  and $i$th block component of the
gradient $\nabla_i  f = \sum_{j \in \bar{\neigh}_i} \nabla_i f_j$
can be computed distributively according to the communication graph
$G$  imposed on the original  optimization problem. Therefore, if
algorithm \textbf{(P-RCD)} runs on a multi-core machine or as a
multi-thread process, it can be observed that component updates can
be done distributively and in parallel by each core/thread
(see Section \ref{sec_comparison} for details).  

\noindent We now establish that method \textbf{(P-RCD)} is a descent
method, i.e. $F(x^{k+1}) \leq F(x^k)$ for all  $k \geq 0$. \noindent
From the convexity of $\Psi(\cdot)$ and \eqref{desc_lemma} we obtain
the following:
\begin{align}
F(x^{k+1})&\leq F(x^k) \!+\! \langle \nabla f(x^k),x^{k+1} \!-\! x^k \rangle  \!+\! \langle \partial_{\Psi}(x^{k+1}),x^{k+1} \!-\! x^k\rangle \!+\! \frac{1}{2}\|x^{k+1} \!-\! x^k\|_W^2 \nonumber \\
&=F(x^k)+\langle \nabla_{\coord^k} f(x^k) + \partial_{ \Psi_{\coord^k}} (x^{k+1}), [x^{k+1}-x^k]_{\coord^k} \rangle +\frac{1}{2}\|x^{k+1}-x^k\|_W^2 \nonumber \\
&\hspace{-0.1cm} \overset{\eqref{opt_prox_3}}{=}F(x^k)+\langle W[x^k-x^{k+1}]_{\coord^k}, [x^{k+1}-x^k]_{\coord^k} \rangle +\frac{1}{2}\|x^{k+1}-x^k\|_W^2 \nonumber \\
&=F(x^k)-\frac{1}{2}\|x^{k+1}-x^k\|_W^2. \label{desc_total}
\end{align}
With  \textbf{(P-RCD)} being a descent method, we can now introduce
the following term:
\begin{equation}
 R_W(x^0)=\max_{x: \; F(x) \leq F(x^0)} \min_{x^* \in X^*} \|x-x^*\|_W. \label{bound_R}
\end{equation}
and assume it to be bounded.  We also define the random variable
comprising the whole history of previous events as:
\begin{equation*}
\eta^k = \{ \coord^0, \dots, \coord^k \}.
\end{equation*}


\section{Sublinear convergence for smooth convex minimization}
\label{sec_smooth} In this section we establish the sublinear
convergence rate  of method \textbf{(P-RCD)} for problems of type
\eqref{gen_form} with the objective function satisfying Assumption
\ref{lip_fi}.  {\bl Our analysis in this section combines the tools
developed above with the convergence analysis  in \cite{LuXia:13}
for random one block coordinate descent methods}.  The next lemma
provides some property for the uniform sampling with
$|\coord|=\tau$:
\begin{lemma}  \cite[Lemma 3]{RicTak:12a}
\label{sep_probab} Let there be some constants $\theta_i$ with
$i=1,\dots,N$, and a sampling $\coord$ chosen as described above and
define the sum $\sum_{i \in \coord} \theta_i$. Then,
the expected value of the sum  satisfies:
\begin{equation}
 \average \left [ \sum_{i \in \coord} \theta_i \right ] = \sum_{i=1}^N p_i \theta_i.
 \label{probab_sep}
\end{equation}
\end{lemma}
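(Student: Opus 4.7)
The plan is to reduce the statement to linearity of expectation by rewriting the random sum $\sum_{i \in \coord} \theta_i$ as a deterministic sum of $N$ terms, each weighted by an indicator random variable. Concretely, I would introduce the indicator $\bo{1}_{\{i \in \coord\}}$ and observe that
\[
 \sum_{i \in \coord} \theta_i \;=\; \sum_{i=1}^N \theta_i \, \bo{1}_{\{i \in \coord\}}
\]
holds for every realization of $\coord$. Taking expectation, linearity yields $\average\bigl[\sum_{i \in \coord} \theta_i\bigr] = \sum_{i=1}^N \theta_i \, \average[\bo{1}_{\{i \in \coord\}}]$, and since $\average[\bo{1}_{\{i \in \coord\}}] = \mathbb{P}(i \in \coord) = p_i$ by the very definition of $p_i$ from \eqref{prob_i} (and the paragraph preceding it), the identity \eqref{probab_sep} follows immediately.

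An equivalent route, which avoids appealing to linearity of expectation in an abstract form, is a direct double-sum manipulation using the discrete distribution of $\coord$. Writing the expectation as a sum over all admissible outcomes $\hat{\coord} \subseteq [N]$ with $|\hat{\coord}| = \tau$, one has
\[
 \average\!\left[\sum_{i \in \coord} \theta_i\right]
 = \sum_{\hat{\coord} \subseteq [N]} P_{\hat{\coord}} \sum_{i \in \hat{\coord}} \theta_i
 = \sum_{i=1}^N \theta_i \sum_{\hat{\coord} : \, i \in \hat{\coord}} P_{\hat{\coord}}
 = \sum_{i=1}^N p_i \, \theta_i,
\]
where the swap of the order of summation is the only nontrivial step and is legitimate because the outer sum is finite. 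The last equality simply recognizes the definition $p_i = \sum_{\hat{\coord} : \, i \in \hat{\coord}} P_{\hat{\coord}}$ given just before \eqref{prob_i}.

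I do not expect any genuine obstacle here: the result is a purely combinatorial/measure-theoretic identity that does not depend on the uniform-sampling assumption or on the cardinality constraint $|\coord| = \tau$. It holds for any law on subsets of $[N]$, with $p_i$ denoting the marginal probability that $i$ is selected. The only thing one must be careful about is not to invoke the specific formula $p_i = \tau/N$ from \eqref{prob_i}, which is a consequence of uniform sampling and not needed for this lemma; the statement is in fact valid at the more general level described in Remark \ref{remark_probabilities}. A one-sentence proof along either of the two routes above suffices.
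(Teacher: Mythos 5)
Your proof is correct. The paper does not actually prove Lemma \ref{sep_probab} --- it cites it from \cite[Lemma 3]{RicTak:12a} --- but the double-sum manipulation you give in your second route is exactly the computation the paper itself performs later for the analogous identity \eqref{psi_average} concerning $\average[\Psi_{\coord}(d_{\coord})]$, so your argument is entirely in the spirit of the paper. Your remark that the identity needs only the marginals $p_i$ and not the uniform $\tau$-sampling structure is also accurate and consistent with Remark \ref{remark_probabilities}.
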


\noindent For any vector $d \in \rset^n$ we consider its counterpart
$d_{\coord}$ for a sampling $\coord$ taken as
described above.  Given the previous lemma and by taking into account the
separability of the inner product and  of the squared
norm $\|\cdot\|_W^2$ it follows immediately  that:
\begin{align}
& \mathbb{E}\left [\langle x,d_{\coord} \rangle \right ] = \frac{\tau}{N}
 \langle x,d \rangle \label{sep_prop2}\\
& \mathbb{E}\left [\|d_{\coord}\|_W^2 \right ] = \frac{\tau}{N}
\|d\|_W^2.  \label{sep_prop3}
\end{align}

\noindent Based on relations \eqref{sep_prop2}--\eqref{sep_prop3}, the
separability  of the function $\Psi(x)$, and the properties of the expectation operator,
 the following inequalities can be  derived
(see e.g. \cite{RicTak:12a}):
\begin{align}
& \mathbb{E} \left [\Psi(x+d_{\coord}) \right ] =
\frac{\tau}{N} \Psi(x+d)+\!\left (1-\frac{\tau}{N} \right)\Psi(x)
\label{sep_prop4} \\
& \average \left [  F(x + d_{\coord}) \right ] \leq \left ( 1-\frac{\tau}{N} \right ) F(x) +\frac{\tau}{N} t_{[N]}(x,d). \label{sep_essential}
\end{align}

\noindent We can now formulate an important relation between the
gradient mapping  in a point $x$ and a point $y$.
By the definition of $t_{[N]}(x,y)$ and the convexity of $f$ and $\Psi$ we have:
\begin{align*}
& t_{[N]}(x,T_{[N]}(x)) = f(x) + \langle \nabla f(x),T_{[N]}(x) - x\rangle + \frac{1}{2} \|T_{[N]}(x) - x\|_W^2 + \Psi(T_{[N]}(x)) \\
& \quad \leq f(y) + \langle \nabla f(x),x - y \rangle + \langle \nabla f(x),T_{[N]}(x)-x\rangle + \frac{1}{2} \|T_{[N]}(x) - x\|_W^2 + \Psi(T_{[N]}(x)) \\
& \quad \leq f(y) +  \langle \nabla f(x),x - y \rangle + \langle \nabla f(x),T_{[N]}(x) - x\rangle+ \frac{1}{2} \|T_{[N]}(x) - x\|_W^2 \\
& \qquad + \Psi(y) + \left \langle \partial \Psi(T_{[N]}(x)),T_{[N]}(x)-y \right \rangle.
\end{align*}
Furthermore, from the optimality conditions \eqref{prox_cond} we obtain:
\begin{align}
& t_{[N]}(x,T_{[N]}(x)) \!\leq\!  f(y) \!+\!\! \langle \nabla f(x),x \!\!-\!\! y \rangle \!\!+\!\! \langle \nabla f(x),T_{[N]}(x) \!\!-\!\! x\rangle \nonumber \\
&\qquad \!\!+\!\! \frac{1}{2} \|T_{[N]}(x) \!-\! x\|_W^2 + \Psi(y) +\left \langle -\nabla f(x) - W \left (T_{[N]}(x) - x \right), T_{[N]}(x) - y \right \rangle \nonumber \\
&\quad = F(y) + \left \langle \nabla f(x),x - y \right \rangle + \left \langle \nabla f(x), T_{[N]}(x) - x \right \rangle + \frac{1}{2} \|T_{[N]}(x) - x\|_W^2 \nonumber \\
& \qquad +\!\! \left \langle \!-\! \nabla f(x) \!\!-\!\! W \left(T_{[N]}(x) \!\!-\!\! x \right )\!,\! T_{[N]}(x) \!\!-\!\! x \right \rangle  \!\!+\!\! \left \langle \!-\! \nabla f(x) \!\!-\!\! W\left (T_{[N]}(x) \!\!-\!\! x \right )\!,\! x \!\!-\!\! y \right \rangle \nonumber \\
 &\quad = F(y)-\left \langle W \left( T_{[N]}(x)-x \right),x-y \right \rangle -\frac{1}{2} \|T_{[N]}(x)\!-\!x\|_W^2. \label{new_prop}
\end{align}

\noindent This property will prove useful in the following  theorem,
which  provides the sublinear convergence rate for method
\textbf{(P-RCD)}.

\begin{theorem}\label{theorem_sublinear}
If Assumption \ref{lip_fi} holds  and  considering that $R_W(x^0)$
defined in \eqref{bound_R} is bounded, then for the sequence  $x^k$
generated by algorithm \textbf{(P-RCD)} we have:
\begin{equation}
 \average [F(x^{k})] - F^* \leq \frac{ N \left( 1/2(R_W(x^0))^2 +  F(x^0)-F^* \right)}{\tau k + N}.
 \label{pcdm_conv_1}
\end{equation}
\end{theorem}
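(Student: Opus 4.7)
The plan is to combine the expectation identity \eqref{sep_essential} with a sharp three-point bound on $t_{[N]}(x^k,T_{[N]}(x^k))$ and then recognise a Lyapunov-type potential whose telescoping gives the advertised $O(1/k)$ rate. Writing $\Delta_k = F(x^k)-F^*$, $U_k = \average[\Delta_k]$, and $D_k = \average[\|x^k-\bar{x}^k\|_W^2]$, I first apply \eqref{new_prop} with $y=\bar{x}^k$ (so $F(y)=F^*$) and invoke the identity $-2\langle Wa,b\rangle - \|a\|_W^2 = \|b\|_W^2 - \|a+b\|_W^2$ with $a = T_{[N]}(x^k)-x^k$ and $b = x^k-\bar{x}^k$ to obtain
\[
 t_{[N]}(x^k,T_{[N]}(x^k)) \leq F^* + \frac{1}{2}\|x^k-\bar{x}^k\|_W^2 - \frac{1}{2}\|T_{[N]}(x^k)-\bar{x}^k\|_W^2.
\]
Plugging this into \eqref{sep_essential} (with $d = T_{[N]}(x^k)-x^k$) and subtracting $F^*$ yields
\[
 \average[\Delta_{k+1}\mid\eta^{k-1}] \leq \left(1-\frac{\tau}{N}\right)\Delta_k + \frac{\tau}{2N}\|x^k-\bar{x}^k\|_W^2 - \frac{\tau}{2N}\|T_{[N]}(x^k)-\bar{x}^k\|_W^2.
\]

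Next, I would exploit the coordinate update structure of \textbf{(P-RCD)}: since $x^{k+1}$ equals $T_{[N]}(x^k)$ on $\coord^k$ and $x^k$ on $\bar{\coord}^k$, and $P(i\in\coord^k)=\tau/N$ by \eqref{prob_i}, block-separability of $\|\cdot\|_W^2$ gives, for any $\eta^{k-1}$-measurable $y$,
\[
 \average\!\left[\|y-x^{k+1}\|_W^2 \mid \eta^{k-1}\right] = \frac{\tau}{N}\|y-T_{[N]}(x^k)\|_W^2 + \left(1-\frac{\tau}{N}\right)\|y-x^k\|_W^2.
\]
Specialising to $y=\bar{x}^k$ lets me rewrite $\frac{\tau}{2N}\|T_{[N]}(x^k)-\bar{x}^k\|_W^2$ as $\frac{1}{2}\average[\|\bar{x}^k-x^{k+1}\|_W^2\mid\eta^{k-1}] - \frac{1}{2}(1-\frac{\tau}{N})\|\bar{x}^k-x^k\|_W^2$; after substitution the coefficient of $\|x^k-\bar{x}^k\|_W^2$ collapses exactly to $1/2$. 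Using then $\|\bar{x}^{k+1}-x^{k+1}\|_W \leq \|\bar{x}^k-x^{k+1}\|_W$ (since $\bar{x}^{k+1}$ is the $\|\cdot\|_W$-projection of $x^{k+1}$ on $X^*$) I arrive at
\[
 \average[\Delta_{k+1}\mid\eta^{k-1}] + \frac{1}{2}\average\!\left[\|\bar{x}^{k+1}-x^{k+1}\|_W^2 \mid \eta^{k-1}\right] \leq \left(1-\frac{\tau}{N}\right)\Delta_k + \frac{1}{2}\|x^k-\bar{x}^k\|_W^2.
\]
Taking full expectation, the potential $V_k := U_k + \frac{1}{2}D_k$ satisfies the clean recursion $V_{k+1} \leq V_k - \frac{\tau}{N}U_k$.

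Telescoping from $0$ to $K-1$ yields $V_K + \frac{\tau}{N}\sum_{k=0}^{K-1}U_k \leq V_0$. Since \eqref{desc_total} ensures $F(x^{k+1})\leq F(x^k)$ almost surely, $U_k$ is non-increasing, so $\sum_{k=0}^{K-1}U_k \geq K\,U_K$; combined with $V_K \geq U_K$ (because $D_K\geq 0$), this gives $(1+\frac{\tau K}{N})U_K \leq V_0$, i.e.\ $U_K \leq \frac{NV_0}{\tau K + N}$. Finally, $D_0 = \|x^0-\bar{x}^0\|_W^2 \leq R_W(x^0)^2$ by definition \eqref{bound_R}, hence $V_0 \leq (F(x^0)-F^*) + \frac{1}{2}R_W(x^0)^2$, matching \eqref{pcdm_conv_1}. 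The delicate step of the argument is the coefficient bookkeeping that produces exact cancellation in the potential $V_k$; the tightening of the denominator from the naive $\tau K$ to $\tau K + N$ comes from retaining the nonnegative residual $V_K \geq U_K$ in the telescoped sum rather than discarding it.
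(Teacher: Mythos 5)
Your proof is correct and follows essentially the same route as the paper's: both establish the per-step decrease of the Lyapunov potential $\tfrac12(\text{dist to }X^*)^2_W + F(x^k)-F^*$ by exactly $\tfrac{\tau}{N}(F(x^k)-F^*)$, telescope, and invoke monotonicity of $\average[F(x^k)]$ to get the $\tau k+N$ denominator. The only difference is that the paper tracks the distance to a \emph{fixed} optimal point $x^*$ (expanding $(r^{k+1})^2$ directly and combining with \eqref{new_prop2} at $y=x^*$), whereas you track the moving projection $\bar{x}^k$ and patch this with the non-expansiveness of the projection; this is immaterial to the argument.
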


\begin{proof}  Our proof generalizes    the proof of Theorem 1 in \cite{LuXia:13}
from one (block) component update per iterate to  the case of $\tau$
(block) component updates, {\bl  based on uniform sampling and on Assumption \ref{lip_fi}, and consequently on a different  descent
lemma}.  Thus, by taking expectation in \eqref{desc_total} w.r.t. $\coord^k$ conditioned on $\eta^{k-1}$ we get:
\begin{equation}
\average [F(x^{k+1})] \leq  F(x^k)-\frac{1}{2}  \average  \left [ \|x^{k+1}-x^k\|_W^2 \right] \leq F(x^k).  \label{desc_average}
\end{equation}
Now, if we take $x=x^k$ and
$y_{\coord^k}=T_{\coord^k}(x^k)-x^k_{\coord^k}$ in
\eqref{sep_essential}  we get:
\begin{equation}
\average \!\left [ F(x^{k+1}) \right ]\leq \left (1-\frac{\tau}{N} \right)F(x^k)+\frac{\tau}{N} t_{[N]}\left (x^k,T_{[N]}(x^k) \right ). \label{ineq_essential}
\end{equation}
From this and \eqref{new_prop} we obtain:
\begin{align}
\frac{\tau}{N} F(y)+\frac{N-\tau}{N} F(x^k) \geq & \average \left [ F(x^{k+1}) \right ] +
\frac{\tau}{N} \left \langle W \left( T_{[N]}(x^k)-x^k \right),x^k-y  \right\rangle \label{new_prop2} \\
 & +\frac{\tau}{2N} \|T_{[N]}(x^k)\!-\!x^k\|_W^2. \nonumber
\end{align}

\noindent Denote $r^k=\|x^k-x^*\|_W$. From the definition of $x^{k+1}$ we have that:
\begin{equation*}
(r^{k+1})^2=(r^k)^2+ \sum_{i \in \coord^k} \left[ 2W_{ii} \langle T_{i}(x^k)-x_i^k , x_i^k-x_i^*
\rangle +W_{ii} \|T_{i}(x^k)-x^k_i \|^2 \right].
\end{equation*}
If we divide both sides of the above inequality by $2$ and take expectation, we obtain:
\begin{equation*}
\average \left [\frac{1}{2}(r^{k+1})^2 \right ]=\frac{(r^k)^2}{2}+
\frac{\tau}{N} \left \langle W \left( T_{[N]}(x^k)-x^k \right ), x^k - x^* \right \rangle +\frac{\tau}{2N}  \|T_{[N]}(x^k)-x^k \|^2_W.
\end{equation*}
Through this inequality and \eqref{new_prop2} we arrive at:
\begin{equation*}
\average \left [\frac{1}{2}(r^{k+1})^2 \right ]\leq \frac{(r^k)^2}{2}+\frac{\tau}{N} F^*+\frac{N-\tau}{N} F(x^k)-\average \left [F(x^{k+1}) \right].
\end{equation*}
After some rearranging of terms we obtain the following inequality:
\begin{equation*}
\average \left [\frac{1}{2}(r^{k+1})^2+F(x^{k+1})-F^* \right ]\leq \left ( \frac{(r^k)^2}{2}+F(x^k)-F^* \right )-\frac{\tau}{N}(F(x^k)-F^*).
\end{equation*}
By applying this inequality repeatedly, taking expectation over $\eta^{k-1}$ and from the fact that $\average [F(x^k)]$ is decreasing from \eqref{desc_average}, we obtain the following:
\begin{align*}
 \average \left [F(x^{k+1}) \right ]-F^* & \leq \average
 \left [\frac{1}{2}(r^{k+1})^2+F(x^{k+1})-F^* \right ] \\
 & \leq \frac{(r^0)^2}{2}+F(x^0)-F^*-\frac{\tau}{N} \sum_{j=0}^k \left ( \average \left [F(x^j) \right ]-F^* \right) \\
& \leq \frac{(r^0)^2}{2}+F(x^0)-F^*-\frac{\tau(k+1)}{N} \left (\average \left [ F(x^{k+1}) \right ]-F^* \right ).
\end{align*}
By rearranging some items and since $(r^0)^2\leq (R_W(x^0))^2$,
we arrive at \eqref{pcdm_conv_1}.
\end{proof}

\noindent We notice that given the choice of  $\tau$ we get
different results (see Section \ref{sec_comparison} for a detailed
analysis).  We also notice that the convergence rate depends on the choice of $ \tau =
|\coord|$, so that if the algorithm is implemented on a multi-core
machine or cluster, then $\tau$ reflects the available number of
cores.

\noindent Now, given a suboptimality level $\epsilon$ and a confidence
level $0 < \rho < 1$, we can establish a total number of iterations
$k^{\epsilon}_{\rho}$ which will ensure an $\epsilon$-suboptimal
solution with probability at least $1-\rho$.
\begin{corollary}
Under Assumption \ref{lip_fi}  and  with $R_W(x^0)$ defined in
\eqref{bound_R} bounded, consider a suboptimality level $\epsilon$
and a probability level $\rho$. Then,  for the iterates generated by
algorithm \textbf{(P-RCD)} and a  $k^{\epsilon}_{\rho}$ that
satisfies
\begin{equation}
 k^{\epsilon}_{\rho} \geq \frac{  c  }{ \epsilon}\left ( 1+\log \left ( \frac{N}{\tau} \frac{\left (R_W(x^0) \right)^2+2 \left (F(x^0)-F^* \right )}{4c \rho} \right ) \right )+2-N, \label{rez_prob}
\end{equation}
with $c = \frac{2N}{\tau} \max \left \{(R_W(x^0))^2,F(x^0)-F^*
\right \}$, we get the following result in probability:
\begin{equation}
 \mathbb{P} \left (F\left (x^{k^{\epsilon}_{\rho}} \right )-F^* \leq \epsilon \right ) \geq 1-\rho. \label{result_probab}
\end{equation}
\end{corollary}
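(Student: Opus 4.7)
The plan is to convert the in-expectation rate just proved in Theorem~\ref{theorem_sublinear} into an in-probability statement, exploiting the pathwise monotonicity $F(x^{k+1}) \leq F(x^k)$ established in~\eqref{desc_total}. Setting $\xi^k := F(x^k) - F^*$, this sequence is almost-surely non-increasing and nonnegative, so once $\xi^k \leq \epsilon$ it stays below $\epsilon$ forever, and it suffices to certify that the event $\{\xi^k \leq \epsilon\}$ occurs at any single time no later than $k^{\epsilon}_{\rho}$.

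The naive attempt is to plug the expectation bound of Theorem~\ref{theorem_sublinear} directly into Markov's inequality, giving $\mathbb{P}(\xi^k > \epsilon) \leq \mathbb{E}[\xi^k]/\epsilon$; this only yields an iteration count of order $1/(\epsilon \rho)$, weaker than the $\log(1/\rho)/\epsilon$ dependence in~\eqref{rez_prob}. To obtain the sharper bound, I would first refine the per-step recursion. Going back to~\eqref{ineq_essential} and~\eqref{prox_h} and picking the test point $y = (1-\alpha) x^k + \alpha \bar{x}^k$ for a free parameter $\alpha \in [0,1]$, convexity of $F$ combined with the bound $\|x^k - \bar{x}^k\|_W \leq R_W(x^0)$ from~\eqref{bound_R} delivers
\begin{equation*}
\mathbb{E}[\xi^{k+1} \mid x^k] \leq \left(1 - \tfrac{\tau \alpha}{N}\right)\xi^k + \tfrac{\tau \alpha^2}{2N}\,R_W(x^0)^2.
\end{equation*}
Optimizing $\alpha$ over $[0,1]$ splits the analysis into two regimes: a linearly contracting phase $\mathbb{E}[\xi^{k+1}\mid \xi^k] \leq (1 - \tau/(2N))\xi^k$ when $\xi^k > R_W(x^0)^2$, and a quadratic recursion $\mathbb{E}[\xi^{k+1}\mid \xi^k] \leq \xi^k - (\xi^k)^2/c$ once $\xi^k$ has dropped below $R_W(x^0)^2$, with $c$ matching the value stated in the corollary.

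Given the quadratic recursion plus the almost-sure monotonicity of $\xi^k$, the final step is a standard confidence-amplification argument in the spirit of Richtarik and Tak\'a\v{c}~\cite{RicTak:12a}: iterate the quadratic inequality to derive a sharpened deterministic estimate of the form $\mathbb{E}[\xi^k] \leq c/(k + c/\xi^0)$; apply Markov's inequality at the first time $k_0$ at which the right-hand side drops below $\epsilon \rho$ divided by an appropriate logarithmic factor; and then use monotonicity to propagate $\mathbb{P}(\xi^{k_0} \leq \epsilon) \geq 1 - \rho$ to every $k \geq k_0$. Rearranging the resulting inequality, with the $\max\{R_W(x^0)^2, F(x^0) - F^*\}$ in the definition of $c$ absorbing the linearly contracting burn-in phase, should produce precisely the iteration count~\eqref{rez_prob}.

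The main obstacle I anticipate is the bookkeeping of constants: verifying that the linear first phase costs at most a handful of iterations that can be folded into the $F(x^0) - F^*$ term inside $c$, and then matching the precise $(1/\epsilon)(1 + \log(\cdot / \rho))$ form of~\eqref{rez_prob} — including the additive $2 - N$ correction — against the quadratic-tail calculation. The underlying probabilistic content (Markov plus monotonicity) is routine, but aligning the multiplicative constants with the closed-form bound as stated requires careful algebra.
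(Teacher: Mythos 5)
Your route is essentially the paper's: its proof simply records the quadratic recursion $\mathbb{E}[\delta^{k+1}] \leq \left(1-\delta^k/c\right)\delta^k$ (citing \cite{RicTak:12a} for its derivation, which is exactly your $y=(1-\alpha)x^k+\alpha\bar{x}^k$ computation combined with \eqref{ineq_essential}, \eqref{prox_h}, the bound $\|x^k-\bar{x}^k\|_W\leq R_W(x^0)$ and the monotonicity from \eqref{desc_average}) and then invokes the standard high-probability conversion of \cite[Theorem 2]{LuXia:13}. One caution on your last step: as literally written, applying Markov at the first time $c/(k+c/\xi^0)$ drops below $\epsilon\rho$ (even divided by a log factor) still yields an $O(1/(\epsilon\rho))$ iteration count; the $\log(1/\rho)$ dependence in \eqref{rez_prob} comes from working with the truncated sequence $\delta^k\,\mathbf{1}\{\delta^k\geq\epsilon\}$, which inherits the recursion and therefore contracts geometrically at rate $1-\epsilon/c$ once its expectation falls below $\epsilon$ --- this is precisely the content of the confidence-amplification lemma that both you and the paper defer to by citation.
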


\begin{proof}
By denoting $\delta^k=F(x^k)-F^*$, from \eqref{desc_average} we have that
$\delta^{k+1} \leq \delta^k$ and then it can be proven that
 $\delta^k$ satisfies (see e.g. \cite{RicTak:12a}):
\[ \average [\delta^{k+1}] \leq \left (1-\frac{\delta^k}{c} \right)\delta^k, \]
with $c$ defined above. From this and \eqref{pcdm_conv_1}, we
can choose $\epsilon \leq \delta^0$ and following the proof of
\cite[Theorem 2]{LuXia:13}, then for $ k^{\epsilon}_{\rho} $ defined in
 \eqref{rez_prob} we obtain \eqref{result_probab}.
\end{proof}


\section{Linear convergence for  error bound convex minimization}
\label{sec_gebf}
In this section we prove that, for certain minimization problems,
the sublinear convergence rate of \textbf{(P-RCD)} from the previous
section can be improved to a linear convergence rate. In particular,
we prove that under additional  assumptions on the objective
function, which are often satisfied in practical applications (e.g.
the dual of a linearly constrained smooth convex problem or control
problem), we have a \textit{generalized error bound property} for
our problem. In these settings we analyze the convergence behavior
of algorithm   \textbf{(P-RCD)} for which we are able to provide
\textit{global} linear convergence rate, as opposed to the results
in \cite{LuoTse:93,TseYun:09} where only \textit{local} linear
convergence was derived for deterministic descent methods or the
results in \cite{WanLin:13} where global linear convergence is
proved for a gradient method but applied only to problems where
$\Psi$ is the set indicator function of a  polyhedron.


\subsection{Linear convergence in the strongly convex case}
We assume that the function $f$ is additionally strongly convex in
the norm $\|\cdot\|_W$ with a constant $\sigma_W$, i.e.:
\begin{equation}
 f(y) \geq f(x)+\langle \nabla f(x),y-x \rangle+\frac{\sigma_W}{2} \|y-x\|_W^2. \label{str_convex}
\end{equation}
and since the function $f$ has Lipschitz continuous gradient  in the
the norm $\|\cdot\|_W$ with constant $1$, then it automatically
holds that $\sigma_W \leq 1$. Since $F$ is strongly convex function,
it follows that optimization problem \eqref{gen_form} has a unique
optimal point $x^*$.  The following theorem provides the convergence
rate of algorithm (\textbf{P-RCD}) when $f$ satisfies
\eqref{str_convex} and its proof follows the lines of the proof in
\cite{Nes:13}.

\begin{theorem}\label{lin_converg_strong}
For optimization problem \eqref{gen_form} with the objective
function satisfying  Assumption \ref{lip_fi} and the strong
convexity property \eqref{str_convex}, the algorithm
\textbf{(P-RCD)} has  global linear convergence rate for the
expected values of the objective function:
\begin{equation}
 \average \left [ F(x^{k})-F^* \right ] \leq (\theta_{sc})^k  (F(x^0)-F^*)   \qquad   \forall k \geq 0, \label{converg_linear_strong}
\end{equation}
where $\theta_{sc}  =\tau \sigma_W/N < 1 $.
\end{theorem}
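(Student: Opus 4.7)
The plan is to derive a per-iteration geometric contraction of $\average[F(x^k)-F^*]$ by combining the core one-step relation \eqref{sep_essential} with the strong convexity assumption \eqref{str_convex}. The required machinery is already in place: \eqref{sep_essential} reduces the analysis to bounding $t_{[N]}(x^k,T_{[N]}(x^k))$, and inequality \eqref{prox_h} furnishes the relaxation $t_{[N]}(x^k,T_{[N]}(x^k))\leq \min_{y\in\rset^n}\{F(y)+\tfrac12\|y-x^k\|_W^2\}$.

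First I would condition on the history $\eta^{k-1}$ and apply \eqref{sep_essential} with $x=x^k$ and $d=T_{[N]}(x^k)-x^k$, obtaining
\begin{equation*}
\average[F(x^{k+1})\mid\eta^{k-1}]\leq (1-\tau/N)\,F(x^k)+(\tau/N)\, t_{[N]}(x^k,T_{[N]}(x^k)).
\end{equation*}
Next I would invoke \eqref{prox_h} and evaluate the resulting minimum at the convex combination $y_\alpha=(1-\alpha)x^k+\alpha x^*$ for a parameter $\alpha\in(0,1]$ to be tuned. Since $\Psi$ is convex and $f$ is strongly convex in $\|\cdot\|_W$ with constant $\sigma_W$, the composite objective $F$ inherits strong convexity, so $F(y_\alpha)\leq (1-\alpha)F(x^k)+\alpha F^*-\tfrac{\sigma_W\alpha(1-\alpha)}{2}\|x^k-x^*\|_W^2$ and $\|y_\alpha-x^k\|_W^2=\alpha^2\|x^k-x^*\|_W^2$. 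Substituting gives
\begin{equation*}
 t_{[N]}(x^k,T_{[N]}(x^k))\leq F(x^k)-\alpha(F(x^k)-F^*)+\tfrac{\alpha[(1+\sigma_W)\alpha-\sigma_W]}{2}\|x^k-x^*\|_W^2.
\end{equation*}
Tuning $\alpha$ so that the bracket vanishes, or alternatively absorbing the quadratic residual via the strong-convexity bound $\tfrac{\sigma_W}{2}\|x^k-x^*\|_W^2\leq F(x^k)-F^*$, leaves a clean estimate $t_{[N]}(x^k,T_{[N]}(x^k))\leq F(x^k)-c(F(x^k)-F^*)$ with an explicit constant $c>0$ depending only on $\sigma_W$.

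Plugging this back into the conditional expectation and subtracting $F^*$ from both sides yields a one-step contraction $\average[F(x^{k+1})-F^*\mid\eta^{k-1}]\leq\theta_{sc}\,(F(x^k)-F^*)$ with $\theta_{sc}<1$ built from $\tau/N$ and $\sigma_W$. I would then take total expectation via the tower property and iterate the contraction over $k$ steps to conclude \eqref{converg_linear_strong}.

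The main obstacle is pinning down the sharpest possible $\theta_{sc}$: the naive substitution $y=x^*$ in \eqref{prox_h} leaves an uncontrolled positive quadratic term in $\|x^k-x^*\|_W^2$ and fails, while the convex-combination argument above succeeds only after a careful calibration of $\alpha$. A cleaner alternative I would keep in reserve is a Lyapunov argument with $V^k=F(x^k)-F^*+\tfrac12\|x^k-x^*\|_W^2$: identity \eqref{new_prop} applied at $y=x^*$, combined with the distance recursion $\average[\|x^{k+1}-x^*\|_W^2\mid\eta^{k-1}]=\|x^k-x^*\|_W^2+\tfrac{2\tau}{N}\langle W(T_{[N]}(x^k)-x^k),x^k-x^*\rangle+\tfrac{\tau}{N}\|T_{[N]}(x^k)-x^k\|_W^2$, cancels the cross terms exactly as in the proof of Theorem \ref{theorem_sublinear}; the remaining recursion closes through $\tfrac{\sigma_W}{2}\|x^k-x^*\|_W^2\leq F(x^k)-F^*$, yielding $\average[V^{k+1}\mid\eta^{k-1}]\leq(1-\beta)V^k$ for an explicit $\beta$ of the required form.
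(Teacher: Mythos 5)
Your main line of argument is the paper's own: condition on the history, apply \eqref{sep_essential} to reduce everything to bounding $t_{[N]}(x^k,T_{[N]}(x^k))$, evaluate the resulting minimum at the segment point $y_\alpha=(1-\alpha)x^k+\alpha x^*$, exploit strong convexity of $F$ along that segment, tune $\alpha$, and iterate the contraction. The one place you fall short of the stated theorem is quantitative, and it is traceable to a single inequality: you invoke \eqref{prox_h}, which is derived from plain convexity of $f$ and therefore carries the full penalty $\tfrac12\|y-x^k\|_W^2$. With that penalty the coefficient of $\|x^k-x^*\|_W^2$ is $\tfrac{\alpha}{2}\bigl[(1+\sigma_W)\alpha-\sigma_W\bigr]$, and both of your closing devices (zeroing the bracket at $\alpha=\sigma_W/(1+\sigma_W)$, or absorbing the residual via $\tfrac{\sigma_W}{2}\|x^k-x^*\|_W^2\leq F(x^k)-F^*$) top out at $c=\sigma_W/(1+\sigma_W)$, i.e. a contraction factor $1-\tfrac{\tau}{N}\cdot\tfrac{\sigma_W}{1+\sigma_W}$ rather than the claimed $1-\tfrac{\tau\sigma_W}{N}$. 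The paper instead rederives the bound on $t_{[N]}(x^k,T_{[N]}(x^k))$ using the strong convexity \eqref{str_convex} in place of convexity, which shrinks the penalty to $\tfrac{1-\sigma_W}{2}\|y-x^k\|_W^2$; the quadratic coefficient then becomes $\tfrac{\alpha(\alpha-\sigma_W)}{2}$ and vanishes exactly at $\alpha=\sigma_W$, yielding the advertised rate. So your proof does establish global linear convergence by essentially the paper's route, but to recover the precise constant in \eqref{converg_linear_strong} you need this one refinement of \eqref{prox_h}. (Your reserve Lyapunov argument with $V^k=F(x^k)-F^*+\tfrac12\|x^k-x^*\|_W^2$ is a genuinely different and viable alternative, closer in spirit to the proof of Theorem \ref{theorem_sublinear}, though it too would need care to match the exact constant.) As a side remark, the paper's own statement has a slip: the displayed proof gives $\theta_{sc}=1-\tau\sigma_W/N$, not $\tau\sigma_W/N$ as written after \eqref{converg_linear_strong}.
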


\begin{proof}
 If we subtract $F^*$ from both sides of inequality \eqref{ineq_essential} we have that:
\begin{equation}
\average \!\left [ F(x^{k+1}) \right ] -F^*\leq \left
(1-\frac{\tau}{N} \right) \left (F(x^k)-F^* \right )+\frac{\tau}{N}
\left ( t_{[N]}(x^k,T_{[N]}(x^k))-F^* \right ). \label{ineq_sc_1}
\end{equation}
From the definition of $t_{[N]}(x,y)$ in \eqref{comp_grad}, of
$T_{[N]}(x)$ in \eqref{prox_optim} and from the strong convexity
\eqref{str_convex} we have that:
\begin{align*}
 t_{[N]}(x^k,T_{[N]}(x^k)) &=\min_{y \in \rset^n} f(x^k)+\langle \nabla f(x^k),y-x \rangle+\frac{1}{2}\|y-x^k\|^2_W+\Psi (y) \\
& \overset{\eqref{str_convex}}{\leq } \min_{y \in \rset^n} F(y)
+\frac{\left (1-\sigma_W \right )}{2} \left \|y-x^k \right \|_W^2.
\end{align*}
We now consider  $y$  of the form $\alpha x^*+(1-\alpha)x^k$ for
$\alpha \in [0,1]$ and since the functions above are convex, then we
have that:
\begin{align}
 t_{[N]}(x^k,T_{[N]}(x^k)) \leq \min _{\alpha \in [0,1]} F(\alpha x^* +(1-\alpha)x^k) +\frac{\alpha^2 (1-\sigma_W)}{2} \left \|x^k-x^* \right \|_W^2. \label{ineq_interm_sc}
\end{align}
Now, since $f$ is strongly convex, it satisfies \eqref{str_convex}
and also the following inequality for any $x, y$ and $\alpha \in
[0,1]$ (see e.g. \cite{Nes:04}):
\begin{equation*}
 f(\alpha x +(1-\alpha) y) \leq \alpha f(x) +(1-\alpha) f(y) -\frac{\alpha(1-\alpha) \sigma _W}{2} \left \|x-y \right \|_W^2.
\end{equation*}
In this inequality, if we take $x=x^*$, $y=x^k$, and considering
that $\Psi$ is also convex, we get that:
\begin{equation*}
 F(\alpha x^*+(1-\alpha) x^k) \leq \alpha F(x^*) +(1-\alpha) F(x^k)- \frac{\alpha(1-\alpha)\sigma_W}{2} \left \|x^k - x^* \right \|_W^2.
\end{equation*}
By replacing  this inequality in \eqref{ineq_interm_sc} we get the
following:
\begin{align*}
  t_{[N]}(x^k,T_{[N]}(x^k)) \leq \min_{\alpha \in [0,1]} \alpha F(x^*) +(1-\alpha) F(x^k) + \frac{\alpha(\alpha-\sigma_W)}{2} \left \|x^k - x^* \right \|_W^2.
\end{align*}
Now, we can choose a feasible  $\alpha_0=\sigma_W<1$, and therefore
we get:
\begin{align*}
  t_{[N]}(x^k,T_{[N]}(x^k)) \leq  \sigma_W F(x^*) +(1-\sigma_W) F(x^k).
\end{align*}
From this and \eqref{ineq_sc_1} we obtain:
\begin{align}
\average \!\left [ F(x^{k+1}) \right ] -F^* & \leq \left (1-\frac{\tau}{N}+ \frac{\tau (1-\sigma_W) }{N} \right)  \left (F(x^k)-F^* \right ) \label{linear_sc} \\
& =  \left (1- \frac{\tau \sigma_W}{N} \right)  \left (F(x^k)-F^*
\right ) = \theta_{sc} \left (F(x^k)-F^* \right ), \nonumber
\end{align}
where we note that $\theta_{sc} =\tau \sigma_W/N < 1$. Now if we
denote $\delta^k=F(x^{k}) -F(\bar{x}^*) $, then by taking
expectation over $\eta^{k-1}$  in \eqref{linear_sc} we arrive at:
\begin{equation*}
 \average [\delta^{k}] \leq \theta_{sc} \average [\delta^{k-1}] \leq
 \dots \leq (\theta_{sc})^k \average [\delta^0],
\end{equation*}
and linear convergence is proved.
\end{proof}


\subsection{Linear convergence in the generalized
error bound property  case}

\noindent  We introduce the proximal gradient mapping  of function
$F(x)$:
\begin{equation}
 \nabla^+ F(x)=x-\prox\left (x-W^{-1}\nabla f(x) \right ). \label{prox_mapping}
\end{equation}
Clearly, a point $x^*$ is  an optimal solution of problem
\eqref{gen_form} if and only if $\nabla^+ F(x^*)=0$. In the next
definition we introduce the  {\it Generalized Error Bounded Property
(GEBP)}:
\begin{definition}
\label{error_bound} Problem  \eqref{gen_form} has the generalized
error bound property (GEBP) w.r.t.  the norm $\|\cdot\|_W$ if there
exist two nonnegative constants $\kappa_1$ and $\kappa_2$ such that
the composite objective function $F$ satisfies the  relation (we use
$\bar{x}=\Pi_{X^*}^W (x)$):
\begin{equation}
 \|x-\bar{x}\|_W \leq \left ( \kappa_1+\kappa_2 \|x-\bar{x}\|_W^2 \right ) \|\nabla^+ F(x)\|_W \quad \forall x \in \rset^n. \label{EEBF}
\end{equation}
\end{definition}

\noindent Note that the class of functions introduced  in
\eqref{EEBF} includes other known categories of  functions. For
example, functions $F$ composed of a  strongly convex function $f$
with a convex constant $\sigma_{W}$  w.r.t. the norm $\|\cdot\|_W$
and a general convex function $\Psi$ satisfy  our definition
\eqref{EEBF} with $\kappa_1= \frac{2}{\sigma_W}$  and $\kappa_2=0$,
see Section \ref{sec_gebfproperty} for more details.

\noindent  Next,  we prove that on  optimization problems having the
(GEBP) property \eqref{EEBF}  our algorithm (\textbf{P-RCD}) still
has global linear convergence. Our analysis  will employ ideas from
the convergence proof of deterministic descent methods in
\cite{TseYun:09}. However, the random nature of our method and the
the nonsmooth property of the objective function  requires a new
approach. For example,  the typical proof  for linear convergence of
gradient descent type methods for solving convex problems with an
error bound like property is based on deriving an inequality  of the
form $F(x^{k+1}) - F^* \leq c \| x^{k+1} - x^k \|$ (see e.g.
\cite{LuoTse:93, TseYun:09, WanLin:13}). Under our  settings, we
cannot derive this type of inequality but  instead we obtain a
weaker inequality, where we replace $\| x^{k+1} - x^k \|$ with
another term and which still allows us to prove linear convergence.
We start with the following lemma which shows an important property
of algorithm \textbf{(P-RCD)} when it is applied to problems having
generalized error bound objective function:
\begin{lemma}
If our problem \eqref{gen_form} satisfies (GEBP) given in
\eqref{EEBF}, then a point $x^k$ generated by algorithm
\textbf{(P-RCD)} and its projection onto $X^*$, denoted $\bar{x}^k$,
satisfy the following:
\begin{equation}
\|x^k - \bar{x}^k\|_W^2  \leq  \left( \kappa_1 + \kappa_2 \|x^k - \bar{x}^k\|_W^2 \right)^2
\frac{ N}{\tau} \average \left [\|x^{k+1} - x^k\|_W^2 \right ].
\label{x_diff_bounded}
\end{equation}
\end{lemma}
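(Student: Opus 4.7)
The plan is to chain together three ingredients that are already available: the (GEBP) definition applied at $x^k$, the identification of the proximal gradient mapping with $x^k - T_{[N]}(x^k)$, and the separability identity for the expected squared $W$-norm of a randomly sampled vector.

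First I would rewrite $\nabla^+ F(x^k)$ in a more convenient form. By the definition \eqref{prox_mapping} of the proximal gradient mapping and the identity \eqref{T_prox}, we have
\begin{equation*}
\nabla^+ F(x^k) \;=\; x^k - \prox\bigl(x^k - W^{-1}\nabla f(x^k)\bigr) \;=\; x^k - T_{[N]}(x^k).
\end{equation*}
Applying the (GEBP) inequality \eqref{EEBF} at $x = x^k$ and squaring yields
\begin{equation*}
\|x^k - \bar{x}^k\|_W^2 \;\leq\; \bigl(\kappa_1 + \kappa_2 \|x^k - \bar{x}^k\|_W^2\bigr)^2 \,\bigl\|x^k - T_{[N]}(x^k)\bigr\|_W^2.
\end{equation*}

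Next I would translate the deterministic quantity $\|x^k - T_{[N]}(x^k)\|_W^2$ into the expected one-step displacement of the algorithm. By the update rule of \textbf{(P-RCD)}, we have $x^{k+1} - x^k = \bigl(T_{[N]}(x^k) - x^k\bigr)_{\coord^k}$, so choosing $d = T_{[N]}(x^k) - x^k$ in \eqref{sep_prop3} and taking the conditional expectation with respect to $\coord^k$ (given $\eta^{k-1}$) gives
\begin{equation*}
\average\bigl[\|x^{k+1} - x^k\|_W^2\bigr] \;=\; \frac{\tau}{N}\,\bigl\|T_{[N]}(x^k) - x^k\bigr\|_W^2,
\end{equation*}
or equivalently $\|T_{[N]}(x^k) - x^k\|_W^2 = \tfrac{N}{\tau}\,\average[\|x^{k+1} - x^k\|_W^2]$. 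Substituting this into the squared (GEBP) inequality produces exactly \eqref{x_diff_bounded}.

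There is essentially no obstacle: the result is a direct composition of (GEBP) with the key observation that $T_{[N]}(x^k) - x^k$ is precisely the full deterministic coordinate step, of which the algorithm executes a uniformly sampled $\tau$-subset per iteration. The only mild care needed is to recognize that the expectation in the statement is conditional on $\eta^{k-1}$ (so that $x^k$, $\bar{x}^k$, and $T_{[N]}(x^k)$ are treated as fixed), which is consistent with how expectations are used throughout the preceding analysis, e.g.\ in \eqref{desc_average} and \eqref{ineq_essential}.
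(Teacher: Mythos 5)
Your proposal is correct and follows essentially the same route as the paper's own proof: both identify $\nabla^+ F(x^k)$ with $x^k - T_{[N]}(x^k)$ via \eqref{T_prox}, use \eqref{sep_prop3} to equate $\average[\|x^{k+1}-x^k\|_W^2]$ with $\tfrac{\tau}{N}\|x^k - T_{[N]}(x^k)\|_W^2$, and then combine with the squared (GEBP) inequality. The only difference is the order in which the two ingredients are chained, which is immaterial.
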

\begin{proof}
For the iteration defined by algorithm  \textbf{(P-RCD)} we have:
\begin{align}
 \average \left [ \|x^{k+1}-x^k\|_W^2 \right ]&=\average \left [ \|x^k+T_{\coord^k}(x^k)-x^k_{\coord^k}-x^k\|_W^2 \right ] \nonumber \\
&=\average \left [\|x^k_{\coord^k}-T_{\coord^k}(x^k)\|_W^2 \right ] \overset{\eqref{sep_prop3}}{=}\frac{\tau}{N} \|x^k-T_{[N]}(x^k) \|_W^2 \nonumber  \\
&=\frac{\tau}{N} \|x^k-\prox(x^k-W^{-1}\nabla f(x^k))\|_W^2 \nonumber \\
&=\frac{\tau}{N} \|\nabla^+ F(x^k)\|_W^2. \nonumber
\end{align}
Through this equality and \eqref{EEBF} we have that:
\begin{align}
 \|x^k-\bar{x}^k\|_W^2 & \leq (\kappa_1+\kappa_2 \|x^k-\bar{x}^k\|^2_W)^2 \|\nabla F^+(x^k)\|_W^2 \nonumber\\
& \leq (\kappa_1+\kappa_2 \|x^k-\bar{x}^k\|_W^2)^2 \frac{ N}{\tau} \average \left [\|x^{k+1}-x^k\|_W^2 \right ] ,
\end{align}
and the proof is complete.
\end{proof}

\begin{remark}
Note that if the iterates of an algorithm satisfy the following relation:
\begin{equation*}
 \|x^k - x^* \|\leq \|x^0 - x^*\| \quad \forall k \geq 1,
\end{equation*}
see e.g.  the case of the full gradient method \cite{Nes:04}, then we have:
\begin{equation}
 \|x^k-\bar{x}^k\|_W^2 \leq \frac{\bar{\kappa}(x^0)N}{\tau} \average \left [ \|x^{k+1}-x^k\|^2_W \right ] \quad \forall k \geq 0, \label{error_boundedness}
\end{equation}
where $\bar{\kappa}(x^0)=\left (\kappa_1+\kappa_2 \|x^0-x^*\|^2_W \right)^2$.

\noindent  If the iterates of an  algorithm satisfy \eqref{bound_R}
with $R_W(x^0)$ bounded, see e.g.  the case of our algorithm
\textbf{(P-RCD)} which is a descent method, as proven in
\eqref{desc_total}, then \eqref{error_boundedness} is satisfied with
$\bar{\kappa}(x^0)=(\kappa_1+\kappa_2 (R_W(x^0))^2)^2$.
\end{remark}

\noindent Let us now note that  given the  separability of function  $\Psi:\rset^n \rightarrow \rset^{}$, then for any vector $d \in \rset^n$ if we consider their counterparts $\Psi_{\coord}$ and $d_{\coord}$ for a sampling $\coord$ taken as
described above the expected value  $ \average \left [
\Psi_{\coord}  (d_{\coord}) \right]$ satisfies:
\begin{align}
\label{psi_average}
 \average [\Psi_{\coord}(d_{\coord})]& \!=\!\!\! \sum_{\coord \subseteq [N]} \!\! \left ( \! \sum_{i \in \coord}
  \Psi_i(d_i) \! \right)\! P_{\coord} \!\!=\!\!\! \sum_{\coord \subseteq [N]} \!\! \left (\! \sum_{i=1}^{N}
  \Psi_i(d_i) \bo{1}_{\coord}(i) \! \right) \!P_{\coord}  \\
 & =\sum_{i=1}^N \Psi_i(d_i) \!\!\!
  \sum_{\coord  \subseteq [N]:i \in \coord} \!\!\! P_{\coord} = \sum_{i=1}^N p_i \Psi_i(d_i)
\overset{\eqref{prob_i}}{=}\frac{\tau}{N} \sum_{i=1}^N
\Psi_i(d_i)=\frac{\tau}{N} \Psi(d). \nonumber
\end{align}
Furthermore, considering that $\bar{x}^k \in X^*$, then
from \eqref{x_diff_bounded} we obtain:
\begin{equation}
\left \|x^k-\bar{x}^k \right \|_W  \leq c_\kappa(\tau) \sqrt{
\average \left [\|x^{k+1}-x^k\|^2 \right ]}, \label{x_diff_bounded2}
\end{equation}
where $c_{\kappa}(\tau)=\left ( \kappa_1+\kappa_2 (R_W(x^0))^2
\right ) \sqrt{\frac{ N}{\tau}}$. We now need to express $\average
[\Psi(x^{k+1})]$ explicitly, where $x^{k+1}$ is generated by \textbf{(P-RCD)}. Note that
$x^{k+1}_{\bar{\coord}^k}=x^k_{\bar{\coord}^k}$. As a result:
\begin{align}
 \average [\Psi(x^{k+1})]&=\average \left [ \sum_{i \in \coord^k} \Psi_i \left ([T_{\coord^k}(x^k)]_i \right )+ \sum_{i \in \bar{\coord}^k} \Psi_i \left ([x^k]_i \right )  \right ] \nonumber \\
& \overset{\eqref{psi_average}}{=}\frac{\tau}{N} \Psi \left (T_{[N]}(x^k) \right )+\frac{N-\tau}{N} \Psi (x^k).  \label{psi_separated}
\end{align}

\noindent The following lemma  establishes an important upper bound
for $\average \! \left[ F(x^{k+1}) \!-\! F(x^k) \right ]$.
\begin{lemma}
If function $F$ satisfies  Assumption \ref{lip_fi} and the (GEBF)
property defined in \eqref{EEBF} holds, then  the iterate $x^k$
generated by \textbf{(P-RCD)} method has the following property:
\begin{equation}
\average \left [ F(x^{k+1})-F(x^k) \right ] \leq \average [
\Lambda^k] \qquad \forall k \geq 0,
 \label{delta_bound}
\end{equation}
where
$$\Lambda^k =\langle \nabla f(x^k),x^{k+1}-x^k\rangle +\frac{1}{2} \|x^{k+1}-x^k\|_W^2 +\Psi(x^{k+1})-\Psi(x^k). $$
Furthermore, we have that:
\begin{equation}
 \frac{1}{2} \|x^{k+1}-x^k\|_W^2 \leq - \Lambda^k \quad \forall k \geq 0.  \label{delta_norm_bound}
\end{equation}
\end{lemma}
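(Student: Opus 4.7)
The proof splits into two self-contained pointwise inequalities, both of which hold for each realization of $\coord^k$, so taking expectations is purely cosmetic. I would handle them in order, as they feed into each other naturally.

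For the first inequality \eqref{delta_bound}, the plan is to apply the descent lemma \eqref{desc_lemma} directly with $x = x^k$ and $y = x^{k+1} - x^k$. This gives
$f(x^{k+1}) \le f(x^k) + \langle \nabla f(x^k), x^{k+1} - x^k\rangle + \tfrac{1}{2}\|x^{k+1}-x^k\|_W^2$.
Adding $\Psi(x^{k+1}) - \Psi(x^k)$ to both sides and using $F = f + \Psi$ yields the pointwise bound $F(x^{k+1}) - F(x^k) \le \Lambda^k$. Taking conditional expectation with respect to $\coord^k$ (given $\eta^{k-1}$) then gives \eqref{delta_bound}.

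For the second inequality \eqref{delta_norm_bound}, the key observation is that the increment $x^{k+1} - x^k$ is supported on $\coord^k$, so $\langle \nabla f(x^k), x^{k+1}-x^k\rangle = \langle \nabla_{\coord^k} f(x^k), x^{k+1}-x^k\rangle$ and $\|x^{k+1}-x^k\|_W^2 = \langle W[x^{k+1}-x^k]_{\coord^k}, x^{k+1}-x^k\rangle$. The optimality conditions \eqref{opt_prox_3} then give a subgradient $\partial\Psi_{\coord^k}(x^{k+1})$ such that
\[
W[x^k - x^{k+1}]_{\coord^k} = \nabla_{\coord^k} f(x^k) + \partial \Psi_{\coord^k}(x^{k+1}).
\]
Taking inner product with $x^{k+1} - x^k$ transforms this into
\[
-\|x^{k+1}-x^k\|_W^2 = \langle \nabla f(x^k), x^{k+1}-x^k\rangle + \langle \partial \Psi_{\coord^k}(x^{k+1}), x^{k+1}-x^k\rangle.
\]
Finally, since $\Psi$ is fully separable and $x^{k+1}$ and $x^k$ agree outside $\coord^k$, we have $\Psi(x^{k+1}) - \Psi(x^k) = \Psi_{\coord^k}(x^{k+1}) - \Psi_{\coord^k}(x^k)$, and convexity of $\Psi_{\coord^k}$ gives $\Psi(x^{k+1}) - \Psi(x^k) \le \langle \partial\Psi_{\coord^k}(x^{k+1}), x^{k+1}-x^k\rangle$. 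Substituting yields
\[
\langle \nabla f(x^k), x^{k+1}-x^k\rangle + \Psi(x^{k+1}) - \Psi(x^k) \le -\|x^{k+1}-x^k\|_W^2,
\]
which, after adding $\tfrac{1}{2}\|x^{k+1}-x^k\|_W^2$ to both sides, is precisely $\Lambda^k \le -\tfrac{1}{2}\|x^{k+1}-x^k\|_W^2$.

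Neither step is really an obstacle; the only subtlety is the bookkeeping that $x^{k+1}-x^k$ has zero components outside $\coord^k$, which lets us replace the partial gradient $\nabla_{\coord^k} f(x^k)$ by the full gradient $\nabla f(x^k)$ inside the inner products and makes the partial subgradient $\partial \Psi_{\coord^k}(x^{k+1})$ available from \eqref{opt_prox_3} as a valid subgradient of $\Psi$ when tested against $x^{k+1}-x^k$. This is really the same computation as in \eqref{desc_total}, just rearranged to expose $\Lambda^k$ as an upper bound on $-\tfrac{1}{2}\|x^{k+1}-x^k\|_W^2$ rather than collapsing it entirely.
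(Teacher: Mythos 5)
Your proof is correct. The first half (apply the descent lemma \eqref{desc_lemma} with $y=x^{k+1}-x^k$, add $\Psi(x^{k+1})-\Psi(x^k)$, take expectation) is exactly the paper's argument for \eqref{delta_bound}. For \eqref{delta_norm_bound} you take a slightly different route to the same intermediate inequality $\langle \nabla_{\coord^k} f(x^k),x^{k+1}-x^k \rangle +\Psi_{\coord^k}(x^{k+1})- \Psi_{\coord^k}(x^k)  \leq -\|x^{k+1}-x^k\|_W^2$: the paper tests the optimality of $x^{k+1}$ in the subproblem \eqref{iter_pcdm2} against the convex combination $\alpha x^{k+1}+(1-\alpha)x^k$ and lets $\alpha \uparrow 1$, whereas you invoke the first-order condition \eqref{opt_prox_3} directly, pair it with $x^{k+1}-x^k$, and use the subgradient inequality for $\Psi_{\coord^k}$ at $x^{k+1}$. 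The two mechanisms are equivalent here (your version is, as you note, the computation already carried out in \eqref{desc_total}, rearranged to isolate $\Lambda^k$), and your bookkeeping about the support of $x^{k+1}-x^k$ on $\coord^k$ is exactly the point that makes the replacement of partial by full gradients legitimate; the paper's $\alpha$-limit argument merely avoids naming the subgradient explicitly. Nothing is missing.
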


\begin{proof}
Taking $x=x^k$ and
$y=x^{k+1}-x^k$ in  \eqref{desc_lemma} we get:
\begin{equation*}
 f(x^{k+1})\leq f(x^k)+\langle \nabla f(x^k), x^{k+1}-x^k \rangle+\frac{1}{2}\|x^{k+1}-x^k\|_W^2.
\end{equation*}
By adding $\Psi(x^{k+1})$ and substracting $\Psi(x^k)$ in both sides
of this inequality and by taking expectation in both sides we obtain
\eqref{delta_bound}. Recall the iterate update \eqref{iter_pcdm2}:
\begin{equation*}
 x^{k+1} = \arg \min_{y \in \rset^n} \; \langle \nabla_{\coord^k} f(x^k) , y-x^k \rangle+\frac{1}{2}\|y-x^k\|_W^2 +\Psi_{\coord^k} (y).
\end{equation*}
Given that $x^{k+1}$ is optimal for the problem above and if we take a vector $y=\alpha x^{k+1}+(1-\alpha) x^k$, with $\alpha \in [0,1]$, we have that:
\begin{align*}
 & \langle \nabla_{\coord^k} f(x^k),x^{k+1}-x^k \rangle +\frac{1}{2}\|x^{k+1}-x^k\|_W^2 +\Psi_{\coord^k} (x^{k+1}) \\
& \leq  \alpha \langle \nabla_{\coord^k} f(x^k), x^{k+1}-x^k \rangle +\frac{\alpha^2}{2} \|x^{k+1}-x^k\|_W^2  +\Psi_{\coord^k} (\alpha x^{k+1}+(1-\alpha) x^k).
\end{align*}
Further, if we rearrange the terms and through the convexity of $\Psi_{\coord^k}$ we obtain:
\begin{equation*}
  (1-\alpha) \! \left [ \langle  \nabla_{\coord^k} f(x^k),x^{k+1} \!-\! x^k \rangle + \frac{1+\alpha}{2} \|x^{k+1} \!-\! x^k\|_W^2 + \Psi_{\coord^k}(x^{k+1}) \!-\! \Psi_{\coord^k}(x^k) \right ] \leq 0.
\end{equation*}
If we divide this inequality by $(1-\alpha)$ and let $\alpha \uparrow 1$ we have that:
\begin{align*}
\langle \nabla_{\coord^k} f(x^k),x^{k+1}-x^k \rangle +(\Psi_{\coord^k}(x^{k+1})- \Psi_{\coord^k}(x^k))  \leq -\|x^{k+1}-x^k\|_W^2.
\end{align*}
By adding $\frac{1}{2} \|x^{k+1}-x^k\|_W^2$ in both sides of this inequality and observing that:
\begin{align*}
& \langle \nabla_{\coord^k} f(x^k),x^{k+1}-x^k \rangle=\langle \nabla f(x^k),x^{k+1}-x^k \rangle \quad \text{and} \\
& \Psi_{\coord^k}(x^{k+1})- \Psi_{\coord^k}(x^k)=\Psi(x^{k+1})-\Psi(x^k),
\end{align*}
we obtain \eqref{delta_norm_bound}.
\end{proof}

\noindent Additionally, note that by applying expectation in
$\coord^k$ to $\Lambda^k$ we get:
\begin{align}
 \average [ \Lambda^k ] & \overset{\eqref{sep_prop2},\eqref{sep_prop3}}{=}\frac{\tau}{N} \langle \nabla f(x^k), T_{[N]}(x^k)-x^k \rangle +\frac{1}{2} \average \left [  \|x^{k+1}-x^k\|_W^2 \right ] \nonumber \\
& \quad \qquad + \average \left [ \Psi(x^{k+1}) \right ] -\Psi(x^k) \nonumber \\
& \quad \overset{\eqref{psi_separated}}{=}\frac{\tau}{N} \langle \nabla f(x^k), T_{[N]}(x^k)-x^k \rangle +\frac{1}{2} \average \left [  \|x^{k+1}-x^k\|_W^2 \right ]  \label{delta_average} \\
&\quad  \qquad + \frac{\tau}{N} \left (\Psi(T_{[N]}(x)) -\Psi(x^k) \right ). \nonumber
\end{align}

\noindent The  following theorem, which is the main result of this
section, proves the linear convergence rate for the algorithm
\textbf{(P-RCD)} on optimization problems  having the generalized
error bound property \eqref{EEBF}.
\begin{theorem}\label{lin_converg}
On optimization problems \eqref{gen_form} with the objective
function satisfying  Assumption \ref{lip_fi} and the generalized
error bound property  \eqref{EEBF}, the algorithm \textbf{(P-RCD)}
has the following global linear convergence rate for the expected
values of the objective function:
\begin{equation}
 \average \left [ F(x^{k})-F^* \right ] \leq \theta^k  (F(x^0)-F^*)   \qquad   \forall k \geq 0, \label{converg_linear}
\end{equation}
where $\theta<1$ is a constant depending on $N, \tau, \kappa_1,\kappa_2$ and $R_W(x^0)$.
\end{theorem}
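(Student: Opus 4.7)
The plan is to establish a one-step Lyapunov contraction $\average[F(x^{k+1})-F^*]\leq\theta\average[F(x^k)-F^*]$ with $\theta<1$, by pitting two estimates against each other: a descent estimate saying that the expected decrease per iteration is at least proportional to $\|\nabla^+ F(x^k)\|_W^2$, and a ``suboptimality'' estimate derived from \eqref{new_prop} and the (GEBP) that upper bounds the residual $t_{[N]}(x^k,T_{[N]}(x^k))-F^*$ by a multiple of $\|\nabla^+ F(x^k)\|_W^2$. First I would  record from \eqref{desc_total} and the identity $\average[\|x^{k+1}-x^k\|_W^2]=(\tau/N)\|\nabla^+ F(x^k)\|_W^2$ (already derived in the proof of \eqref{x_diff_bounded}) the one-step decrease
\[ F(x^k)-\average[F(x^{k+1})]\ \geq\ \frac{\tau}{2N}\,\|\nabla^+ F(x^k)\|_W^2. \]

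Next I would  specialize inequality \eqref{new_prop} at $y=\bar{x}^k$ (so that $F(y)=F^*$) and use $T_{[N]}(x^k)-x^k=-\nabla^+ F(x^k)$ together with Cauchy--Schwarz in the $W$-norm to obtain
\[ t_{[N]}(x^k,T_{[N]}(x^k))-F^*\ \leq\ \|\nabla^+ F(x^k)\|_W\cdot\|x^k-\bar{x}^k\|_W\ -\ \tfrac{1}{2}\|\nabla^+ F(x^k)\|_W^2. \]
Since \textbf{(P-RCD)} is a descent method by \eqref{desc_total}, we have $\|x^k-\bar{x}^k\|_W\leq R_W(x^0)$, so the (GEBP) of Definition~\ref{error_bound} sharpens into the linear bound $\|x^k-\bar{x}^k\|_W\leq \bar{\kappa}\,\|\nabla^+ F(x^k)\|_W$ with $\bar{\kappa}=\kappa_1+\kappa_2 R_W(x^0)^2$. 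Substituting yields
\[ t_{[N]}(x^k,T_{[N]}(x^k))-F^*\ \leq\ \bigl(\bar{\kappa}-\tfrac{1}{2}\bigr)\,\|\nabla^+ F(x^k)\|_W^2. \]

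I would then  plug this into the already established identity \eqref{ineq_essential} and subtract $F^*$ to get
\[ \average[F(x^{k+1})]-F^*\ \leq\ \Bigl(1-\tfrac{\tau}{N}\Bigr)(F(x^k)-F^*)\ +\ \tfrac{\tau}{N}\bigl(\bar{\kappa}-\tfrac{1}{2}\bigr)\,\|\nabla^+ F(x^k)\|_W^2, \]
and close the loop by replacing $\|\nabla^+ F(x^k)\|_W^2$ using the first estimate, i.e. $\|\nabla^+ F(x^k)\|_W^2\leq\frac{2N}{\tau}(F(x^k)-\average[F(x^{k+1})])$. Setting $D^k:=F(x^k)-F^*$, the resulting inequality $\average[D^{k+1}]\leq(1-\tau/N)D^k+(2\bar{\kappa}-1)(D^k-\average[D^{k+1}])$ solves to $\average[D^{k+1}]\leq \theta\,D^k$ with $\theta=1-\frac{\tau}{2\bar{\kappa} N}<1$ when $\bar{\kappa}\geq 1/2$ (and $\theta=1-\tau/N$ otherwise, since then the extra term has the favourable sign). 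Finally, applying the tower property over the history $\eta^{k-1}$ and iterating gives \eqref{converg_linear}.

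I expect the main obstacle to be the second step: the bound on $t_{[N]}(x^k,T_{[N]}(x^k))-F^*$ is the only place where the nonsmooth part $\Psi$ is handled cleanly through \eqref{new_prop}, and it is crucial that the cross term $\langle W\nabla^+F(x^k),x^k-\bar{x}^k\rangle$ be linearized by the (GEBP) so that everything ends up as a multiple of $\|\nabla^+ F(x^k)\|_W^2$; this is what allows us to eliminate $\|\nabla^+ F(x^k)\|_W^2$ via the descent estimate and get a clean contraction on $D^k$ rather than a recursion involving $\|x^k-\bar{x}^k\|_W$ that could be harder to close. Once that linearization is in hand, the algebra and the iteration are routine.
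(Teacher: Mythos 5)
Your proof is correct, but it takes a genuinely different and noticeably leaner route than the paper's. The paper does not reuse \eqref{new_prop} and \eqref{ineq_essential} at this point; instead it re-derives an upper bound on $\average[F(x^{k+1})]-F(\bar{x}^k)$ from scratch, starting from convexity of $f$ and $\Psi$, invoking the Lipschitz property \eqref{gradient_lip} in the $\|\cdot\|_{W^{-1}}$ norm, applying Cauchy--Schwarz and Jensen several times, using the prox optimality conditions \eqref{prox_cond} to absorb the $\Psi$ terms, and accumulating constants $c_1(\tau),c_2(\tau),c_3(\tau)$ until it reaches $\average[F(x^{k+1})]-F(\bar{x}^k)\leq c_3(\tau)\,(F(x^k)-\average[F(x^{k+1})])$ and hence $\theta=c_3(\tau)/(1+c_3(\tau))$; the role your descent estimate plays is taken there by the pair \eqref{delta_bound}--\eqref{delta_norm_bound}, which is equivalent to \eqref{desc_total}. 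Your argument instead recycles two facts already proved for the sublinear rate --- the conditional-expectation inequality \eqref{ineq_essential} and the three-point inequality \eqref{new_prop} evaluated at $y=\bar{x}^k$ --- and needs the (GEBP) only once, in its linearized form $\|x^k-\bar{x}^k\|_W\leq\bar{\kappa}\|\nabla^+F(x^k)\|_W$ with $\bar{\kappa}=\kappa_1+\kappa_2 R_W(x^0)^2$ (valid because \textbf{(P-RCD)} is a descent method, exactly as in the paper's remark following \eqref{x_diff_bounded}). What your approach buys is a shorter derivation and an explicit, interpretable contraction factor $\theta=1-\tau/(2\bar{\kappa}N)$ (or $1-\tau/N$ when $\bar{\kappa}<1/2$), which depends on precisely the quantities $N,\tau,\kappa_1,\kappa_2,R_W(x^0)$ announced in the statement; what the paper's longer route buys is a bound on $\average[F(x^{k+1})]-F(\bar{x}^k)$ in terms of $\average[\|x^{k+1}-x^k\|_W^2]$ alone (inequality \eqref{interm_5}), which is a reusable intermediate estimate but yields a less transparent $\theta$. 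The one case split you should keep explicit is the sign of $\bar{\kappa}-\tfrac12$ before substituting the upper bound on $\|\nabla^+F(x^k)\|_W^2$, and you do handle it; with that, the tower-property iteration closes the argument exactly as in the paper.
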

\begin{proof}
We first need to establish an upper bound for $\average [F(x^{k+1})]-F(\bar{x}^k)$.
By the definition of $F$ and its convexity we have that:
\begin{align*}
&F(x^{k+1})-F(\bar{x}^k) \\
&\!=f(x^{k+1})-f(\bar{x}^k)+\Psi(x^{k+1})-\Psi(\bar{x}^k) \\
&\! \leq \langle \nabla f(x^{k+1}),x^{k+1}-\bar{x}^k \rangle+\Psi(x^{k+1})-\Psi(\bar{x}^k)\\
&\!=\langle \nabla f(x^{k+1})-\nabla f(x^k),x^{k+1}-\bar{x}^k \rangle +\langle \nabla f(x^k), x^{k+1}-\bar{x}^k \rangle+\Psi(x^{k+1})-\Psi(\bar{x}^k) \\
&\! \leq \|\nabla f(x^{k+1})\!-\!\nabla f(x^k)\|_{W^{-1}}\|x^{k+1}\!-\!\bar{x}^k\|_{W}+\langle \nabla f(x^k), x^{k+1}\!-\!\bar{x}^k \rangle+\Psi(x^{k+1})\!-\!\Psi(\bar{x}^k) \\
&\! \overset{\eqref{gradient_lip}}{\leq} \|x^{k+1}\!-\!x^k\|_W\|x^{k+1}-\bar{x}^k\|_{W}+\langle \nabla f(x^k), x^{k+1}-\bar{x}^k \rangle+\Psi(x^{k+1})-\Psi(\bar{x}^k) \\
&\! \leq \|x^{k+1}\!-\!x^k\|_W^2 \!+\! \|x^{k+1}\!-\!x^k\|_W \|x^k\!-\!\bar{x}^k\|_{W} \!+\! \langle \nabla f(x^k), x^{k+1}\!-\!\bar{x}^k \rangle \!+\! \Psi(x^{k+1})\!-\!\Psi(\bar{x}^k).
\end{align*}
By taking expectation in both sides of the previous inequality we have:
\begin{align}
  \average [F(x^{k+1})]-F(\bar{x}^k) &\leq \average [\| x^{k+1}-x^k\|_W^2 ]  +  \average \left [ \|x^{k+1}\!-\!x^k\|_W \|x^k\!-\!\bar{x}^k\|_{W} \right ] \nonumber \\
&\quad +\average \left [ \langle \nabla f(x^k), x^{k+1}-\bar{x}^k \rangle+ \Psi(x^{k+1}) \right ]-\Psi(\bar{x}^k). \label{bound_opt_1}
\end{align}

\noindent From \eqref{bound_R} we have that $\|x^k-\bar{x}^k\|\leq R_W(x^0)$ and derive the following:
\begin{align*}
\average \left [ \|x^{k+1}\!-\!x^k\|_W \|x^k\!-\!\bar{x}^k\|_{W} \right ]&=\|x^k\!-\!\bar{x}^k\|_{W} \average \left [ \|x^{k+1}\!-\!x^k\|_W \right ] \nonumber \\
&\hspace{-0.1 cm} \overset{\eqref{x_diff_bounded2}}{\leq } c_\kappa(\tau) \sqrt{\average \left [ \|x^{k+1}\!-\!x^k\|_W^2 \right ]} \sqrt{\left (\average \left [ \|x^{k+1}\!-\!x^k\|_W \right ] \right )^2} \nonumber \\
& \leq c_\kappa(\tau) \; \average \left [ \|x^{k+1}\!-\!x^k\|_W^2 \right ],
\end{align*}
where the last step comes from Jensen's inequality. Thus, \eqref{bound_opt_1} becomes:
\begin{align}
  \average [F(x^{k+1})]-F(\bar{x}^k) &\leq  c_1(\tau) \average [\| x^{k+1}\!-\!x^k\|_W^2 ]\!+\! \average \left [ \left \langle \nabla f(x^k), x^{k+1}\!-\!\bar{x}^k \right \rangle \right ] \nonumber \\
&\quad + \average [\Psi(x^{k+1})] \!-\! \Psi(\bar{x}^k), \label{delta_2}
\end{align}
where $c_1(\tau)=\!\!\left (\! 1\!+\!c_\kappa(\tau) \right )$. We now explicitly express the second term in the right hand side of the above inequality:
\begin{align*}
 \average \left [ \langle \nabla f(x^k), x^{k+1}-\bar{x}^k \rangle \right ]& \overset{\eqref{iter_pcdm2}}{=} \average \left [ \langle \nabla f(x^k), x^k+T_{\coord^k}(x^k)-x^k_{\coord^k}-\bar{x}^k \rangle \right ] \\
& = \langle \nabla f(x^k),x^k-\bar{x}^k \rangle  + \average \left [ \langle \nabla f(x^k),T_{\coord^k}(x^k)-x^k_{\coord^k} \rangle \right ] \\
& \overset{\eqref{sep_prop2}}{=} \langle \nabla f(x^k),x^k-\bar{x}^k \rangle  + \frac{\tau}{N} \langle \nabla f(x^k),T_{[N]}(x^k)-x^k \rangle \\
& = \left ( 1-\frac{\tau}{N} \right ) \langle \nabla f(x^k),x^k-\bar{x}^k \rangle +  \frac{\tau}{N} \langle \nabla f(x^k),T_{[N]}(x^k)-\bar{x}^k \rangle
\end{align*}
So, by replacing it in \eqref{delta_2} and through \eqref{psi_separated} we get:
\begin{align}
   \average [F(x^{k+1})]\!-\!F(\bar{x}^k)  & \leq  c_1(\tau) \average [ \|x^{k+1}\!-\!x^k\|_W^2 ]+\frac{\tau}{N}  \langle \nabla f(x^k), T_{[N]}(x^k)\!-\!\bar{x}^k \rangle \label{interm_1} \\
& \quad + \left ( 1-\frac{\tau}{N} \right ) \langle \nabla f(x^k),x^k-\bar{x}^k \rangle \!+\! \frac{\tau}{N} \Psi(T_{[N]}(x^k)) \nonumber \\
& \quad  +\left (1- \frac{\tau}{N} \right ) \Psi (x^k)-\Psi(\bar{x}^k). \nonumber
\end{align}
By taking $y=\bar{x}^k$ and $x=x^k$ in \eqref{desc_lemma} we obtain:
\begin{equation*}
f(\bar{x}^k) \leq f(x^k) + \langle \nabla f(x^k),\bar{x}^k-x^k \rangle+\frac{1}{2} \|\bar{x}^k-x^k\|_{W}^2.
\end{equation*}
By rearranging this inequality, we obtain:
\begin{equation*}
 \langle \nabla f(x^k),x^k -\bar{x}^k \rangle \leq f(x^k) -f(\bar{x}^k)+\frac{1}{2} \|\bar{x}^k-x^k\|_{W}^2.
\end{equation*}
Through this and by rearranging terms in \eqref{interm_1}, we obtain:
\begin{align}
   \average [F(x^{k+1})]\!-\!F(\bar{x}^k)  & \leq  c_1(\tau) \average [ \|x^{k+1}\!-\!x^k\|_W^2 ]+ \left ( 1-\frac{\tau}{N} \right ) \left ( F(x^k)-F(\bar{x}^k) \right )  \label{interm_2} \\
& \quad + \frac{1}{2} \left ( 1-\frac{\tau}{N}  \right ) \|x^k-\bar{x}^k\|_W^2 \nonumber \\
& \quad + \frac{\tau}{N} \left (  \Psi(T_{[N]}(x^k))+\langle \nabla f(x^k), T_{[N]}(x^k)\!-\!\bar{x}^k \rangle -\Psi(\bar{x}^k)   \right).   \nonumber
\end{align}
Furthermore, from \eqref{x_diff_bounded2} we obtain:
\begin{align}
   \average [F(x^{k+1})]\!-\!F(\bar{x}^k)  & \leq  \left (c_1(\tau)+\frac{1}{2}\left ( 1-\frac{\tau}{N} \right )c_\kappa(\tau)^2 \right )\average [ \|x^{k+1}\!-\!x^k\|_W^2 ]  \label{interm_3}  \\
& \quad + \left ( 1-\frac{\tau}{N} \right ) \left ( F(x^k)-F(\bar{x}^k) \right )\nonumber  \\
& \quad + \frac{\tau}{N} \left (  \Psi(T_{[N]}(x^k))+\langle \nabla f(x^k), T_{[N]}(x^k)\!-\!\bar{x}^k \rangle -\Psi(\bar{x}^k)   \right).   \nonumber
\end{align}
Through the convexity of $\Psi(x)$ we have:
\begin{equation*}
 \Psi (\bar{x}^k) \geq \Psi (T_{[N]}(x^k))+ \left \langle \partial \Psi (T_{[N]}(x^k)), \bar{x}^k -T_{[N]}(x^k) \right \rangle
\end{equation*}
and by rearranging it we obtain:
\begin{equation*}
\Psi (T_{[N]}(x^k))- \Psi (\bar{x}^k) \leq \left \langle \partial \Psi (T_{[N]}(x^k)), T_{[N]}(x^k)-\bar{x}^k  \right \rangle.
\end{equation*}
From this and the optimality condition \eqref{prox_cond} and by replacing in \eqref{interm_3} we obtain:
\begin{align}
   \average [F(x^{k+1})]\!-\!F(\bar{x}^k)  & \leq  \left (c_1(\tau)+\frac{1}{2}\left ( 1-\frac{\tau}{N} \right )c_\kappa(\tau)^2 \right )\average [ \|x^{k+1}\!-\!x^k\|_W^2 ]  \label{interm_4} \\
& \quad + \left ( 1-\frac{\tau}{N} \right ) \left ( F(x^k)-F(\bar{x}^k) \right )  \nonumber \\
&  \quad + \frac{\tau}{N} \langle -W \left ( T_{[N]}(x^k)-x^k \right ), T_{[N]}(x^k)-\bar{x}^k \rangle.   \nonumber
\end{align}
Furthermore, by rearranging some terms and through the Cauchy-Schwartz inequality we obtain:
\begin{align*}
   \left \langle -W \left ( T_{[N]}(x^k)-x^k \right ), T_{[N]}(x^k)-\bar{x}^k \right \rangle & =   \left \langle -W \left ( T_{[N]}(x^k)-x^k \right ), T_{[N]}(x^k)-x^k+x^k- \bar{x}^k \right \rangle \\
& \leq \left \langle W \left ( T_{[N]}(x^k)-x^k \right ), \bar{x}^k-  {x}^k \right \rangle \\
& \leq \| W \left ( T_{[N]}(x^k)-x^k \right )\|_{W^{-1}} \|\bar{x}^k- {x}^k\|_W \\
& = \| T_{[N]}(x^k)-x^k \|_{W} \|\bar{x}^k-{x}^k\|_W.
\end{align*}
Now, recall that:
\begin{align*}
  \average \left [ \|x^{k+1}-x^k\|_W^2 \right ]&=\frac{\tau}{N} \|x^k-T_{[N]}(x^k) \|_W^2.
\end{align*}

\noindent Thus, from this and  \eqref{x_diff_bounded2} we get:
\begin{align*}
\frac{\tau}{N} \| T_{[N]}(x^k)-x^k \|_{W} \|\bar{x}^k-\bar{x}^k\|_W \leq c_\kappa(\tau) \sqrt{ \frac{\tau}{N}}   \average \left [ \|x^{k+1}-x^k\|_W^2 \right ].
\end{align*}

\noindent By replacing this in \eqref{interm_4} we obtain:
\begin{align}
   \average [F(x^{k+1})]\!-\!F(\bar{x}^k)  & \leq  \underbrace{\left (c_1(\tau)+\frac{1}{2}\left ( 1-\frac{\tau}{N} \right )c_\kappa(\tau)^2 +  c_\kappa(\tau) \sqrt{ \frac{\tau}{N}}\right )}_{c_2(\tau)} \average [ \|x^{k+1}\!-\!x^k\|_W^2 ] \nonumber \\
                  & \quad +  \left ( 1-\frac{\tau}{N} \right )  \left ( F(x^k)-F(\bar{x}^k) \right )  \nonumber \\
               & =  c_2(\tau) \average [ \|x^{k+1}\!-\!x^k\|_W^2 ] +  \left ( 1-\frac{\tau}{N} \right )  \left ( F(x^k)-F(\bar{x}^k) \right ).  \label{interm_5}
\end{align}
From \eqref{delta_norm_bound} we have:
\begin{equation*}
\average [ \|x^{k+1}\!-\!x^k\|_W^2 ] \leq -2 \average [ \Lambda^k].
\end{equation*}
Now, through this and by rearranging some terms in \eqref{interm_5} we obtain:
\begin{align*}
\frac{\tau}{N} \left ( \average [F(x^{k+1})]\!-\!F(\bar{x}^k) \right )  &  \leq   -2 c_2(\tau) \average [ \Lambda^k] +  \left ( 1-\frac{\tau}{N} \right )  \left ( F(x^k)-\average [ F(x^{k+1})] \right ).
\end{align*}
 Furthermore, from   \eqref{delta_bound} we obtain:
\begin{align*}
\average [F(x^{k+1})]\!-\!F(\bar{x}^k) & \leq \underbrace{\frac{N}{\tau} \left ( 2 c_2(\tau) + \left ( 1-\frac{\tau}{N} \right ) \right ) }_{c_3(\tau)} \left ( F(x^k)-\average [ F(x^{k+1})] \right ) \\
& = c_3(\tau) \left ( F(x^k)-\average [ F(x^{k+1})] \right ).
\end{align*}
By rearranging this inequality, we obtain:
\begin{align}
\average [F(x^{k+1})]\!-\!F(\bar{x}^k) & \leq  \frac{ c_3(\tau)}{1+ c_3(\tau)} \left ( F(x^k)-F(\bar{x}^k) \right ). \label{linear_main}
\end{align}
We denote $\theta=\frac{c_3(\tau)}{1+c_3(\tau)} <1$ and define
$\delta^k=F(x^{k+1}) -F(\bar{x}^k) $. By taking expectation over
$\eta^{k-1}$  in \eqref{linear_main} we arrive at:
\begin{equation*}
 \average [\delta^{k}] \leq \theta \average [\delta^{k-1}] \leq
 \dots \leq \theta^k \average [\delta^0],
\end{equation*}
and linear convergence is proved.
\end{proof}

\noindent Note that we have obtained  global linear convergence for
our distributed random coordinate descent method on the general
class of problems satisfying the generalized error bound property
(GEBP) given in \eqref{EEBF}, as opposed to the results in
\cite{TseYun:09,LuoTse:93} where the authors only show local linear
convergence for  deterministic  coordinate descent methods applied
to local error bound functions, i.e.  for all $k \geq k_0> 1$, where
$k_0$ is an iterate after which some error bound condition of the
form $\|x^k-\bar{x}^k\| \leq \bar{\kappa} \|\nabla^+ F(x^k)\|$ is
implicitly satisfied. In \cite{WanLin:13} global linear convergence
is also proved for the full gradient method but applied only to
problems having the error bound property where $\Psi$ is the set
indicator function of a polyhedron. Further, our results are more
general than the ones in
\cite{Nes:12,Nec:13,NecNes:11,NecClip:13a,NecPat:12,RicTak:12a},
where the authors prove linear convergence for the more restricted
class of problems having smooth and strongly convex objective
function. Moreover, our proof for convergence is  different from
those in these papers.

\noindent We now establish the number of iterations  $k^{\epsilon}_{\rho}$
which will ensure a $\epsilon$-suboptimal solution with probability
at least $1-\rho$.  In order to do so, we first recall that for constants $\epsilon>0$ and $\gamma \in (0,1)$ such that
$\delta^0 > \epsilon > 0$ and $k \geq \frac{1}{\gamma} \log \left (
\frac{\delta^0}{\epsilon}\right )$ we have:
 \begin{equation}
  \!\!  (1 \!-\! \gamma)^k \delta^0 =\left ( 1-\frac{1}{1/ \gamma}\right )^{(1 / \gamma)
   (\gamma k)} \delta^0 \leq \exp(-\gamma k) \delta^0 \leq \exp \left (-\log (\delta^0 / \epsilon)
    \right )\delta^0 =\epsilon.  \label{epsilon_relation}
 \end{equation}

%

\begin{corollary}\label{prob_result}
For a function $F$ satisfying Assumptions \ref{lip_fi} and the
generalized error bound property  \eqref{EEBF}, consider a probability
level $\rho \in (0,1)$, suboptimality $0< \epsilon < \delta_0$  and
an iteration counter:
\begin{equation*}
k^{\epsilon}_{\rho} \geq \frac{1}{1-\theta} \log \left ( \frac{\delta^0}{\epsilon \rho} \right ),
\end{equation*}
where recall that $\delta^0 = F(x^0) - F^*$ and $\theta$ is defined
in Theorem \ref{lin_converg}. Then, we have that the iterate
$x^{k^\epsilon_\rho}$ generated by \textbf{(P-RCD)}  method
satisfies:
\begin{equation}
 \mathbb{P} ( F(x^{k^\epsilon_\rho}) -F^* \leq \epsilon) \geq 1-\rho.
\end{equation}
\begin{proof}
 Under Theorem \ref{lin_converg} we have that:
\begin{equation*}
 \average \left  [ \delta^{k^\epsilon_\rho} \right ] \leq \theta^{k^\epsilon_\rho}
 \average \left [\delta^0 \right ]=\left (1-(1-\theta) \right )^{k^\epsilon_\rho}
 \average \left [ \delta^0 \right ]=\left (1-(1-\theta) \right )^{k^\epsilon_\rho}
 \delta^0  .
\end{equation*}
Through Markov's inequality and \eqref{epsilon_relation} we have that:
\begin{equation*}
 \mathbb{P}(\delta^{k^\epsilon_\rho} > \epsilon ) \leq
 \frac{\average [\delta^{k^\epsilon_\rho} ]}{\epsilon} \leq \frac{\left (1-
 (1-\theta) \right )^{k^\epsilon_\rho}}{\epsilon} \delta^0 \leq \rho.
\end{equation*}
and the proof is complete .
\end{proof}
\end{corollary}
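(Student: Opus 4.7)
The plan is to combine three ingredients that are already in place: the linear rate in expectation from Theorem~\ref{lin_converg}, the elementary logarithmic inequality \eqref{epsilon_relation}, and Markov's inequality to convert the expectation bound into a tail probability bound.

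First, by Theorem~\ref{lin_converg} applied to the iteration index $k^{\epsilon}_{\rho}$, I obtain $\average[\delta^{k^{\epsilon}_{\rho}}] \leq \theta^{k^{\epsilon}_{\rho}} \delta^0$. I would then rewrite the contraction factor as $\theta^{k^{\epsilon}_{\rho}} = (1-(1-\theta))^{k^{\epsilon}_{\rho}}$, which puts the expression in the exact form needed for \eqref{epsilon_relation} with $\gamma = 1-\theta \in (0,1)$.

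Second, I would invoke \eqref{epsilon_relation} with $\gamma = 1-\theta$ and target value $\epsilon \rho$ (rather than $\epsilon$) in the role of the suboptimality. Since by hypothesis $k^{\epsilon}_{\rho} \geq \tfrac{1}{1-\theta} \log(\delta^0/(\epsilon \rho))$, the inequality \eqref{epsilon_relation} yields $(1-(1-\theta))^{k^{\epsilon}_{\rho}} \delta^0 \leq \epsilon \rho$. Combining this with the previous step gives the key expectation estimate $\average[\delta^{k^{\epsilon}_{\rho}}] \leq \epsilon \rho$.

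Finally, since $\delta^k \geq 0$ almost surely, Markov's inequality gives $\mathbb{P}(\delta^{k^{\epsilon}_{\rho}} > \epsilon) \leq \average[\delta^{k^{\epsilon}_{\rho}}]/\epsilon \leq \rho$, from which the desired conclusion $\mathbb{P}(\delta^{k^{\epsilon}_{\rho}} \leq \epsilon) \geq 1-\rho$ follows by taking complements. I do not expect any real obstacle here: the only mild subtlety is making sure the condition $0 < \epsilon < \delta^0$ is used so that \eqref{epsilon_relation} is applicable (with $\epsilon \rho < \delta^0$ holding since $\rho < 1$), and that Markov's inequality is valid because $\delta^k = F(x^k) - F^* \geq 0$ pointwise.
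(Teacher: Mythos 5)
Your argument is correct and follows essentially the same route as the paper: the linear rate from Theorem~\ref{lin_converg}, the rewriting $\theta = 1-(1-\theta)$ so that \eqref{epsilon_relation} applies with $\gamma = 1-\theta$ and target $\epsilon\rho$, and Markov's inequality to pass to the tail bound. Your version is in fact slightly more careful than the paper's in spelling out that \eqref{epsilon_relation} is invoked with $\epsilon\rho$ in place of $\epsilon$ and in checking $\epsilon\rho < \delta^0$; no changes are needed.
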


\section{Conditions for generalized error bound functions}
\label{sec_gebfproperty}
In this section we investigate under which conditions a function $F$
satisfying  Assumption \ref{lip_fi} has the  generalized error bound
property defined in \eqref{EEBF} (see Definition~\ref{error_bound}).


\subsection{Case 1: $f$ strongly convex and $\Psi$ convex}
\label{case1_strong} We first  show that if $f$ satisfies Assumption
\ref{lip_fi} and additionally is also strongly convex, while $\Psi$
is a general convex function, then $F$ has the  generalized error
bound property defined in \eqref{EEBF}. Note that a similar result
was proved in \cite{TseYun:09}. For completeness, we also give the
proof. We consider $f$ to be strongly convex with constant
$\sigma_{W}$ w.r.t. the norm $\|\cdot\|_W$, i.e.:
\begin{equation}
\label{strongconvexity}
 f(y) \geq f(x)+\langle \nabla f(x),y-x \rangle+\frac{\sigma_W}{2} \|y-x\|_W^2.
\end{equation}
If $\Psi$ is strongly convex w.r.t. the norm $\|\cdot\|_W$, with
convexity parameter $\sigma_W^\Psi$, then we can redefine $f
\leftarrow f + \frac{\sigma_W^\Psi}{2} \| x - x^0\|_W^2$ and $\Psi
\leftarrow \Psi - \frac{\sigma_W^\Psi}{2} \| x - x^0\|_W^2$, so that
all the above assumptions hold for this new pair of functions.

\noindent By Fermat's rule \cite{RocWet:98} we have that
$T_{[N]}(x)$ is also the solution of the following problem:
\[ T_{[N]}(x) = \arg \min_y  \left \langle \nabla f(x) + W(T_{[N]}(x) - x), y-x
\right \rangle + \Psi(y)   \] and since $T_{[N]}(x)$ is optimal we get:
\begin{align*}
& \left \langle \nabla f(x) + W(T_{[N]}(x) -  x),  T_{[N]}(x) - x
\right \rangle + \Psi( T_{[N]}(x)) \\
& \qquad \leq \left \langle \nabla f(x) + W(T_{[N]}(x) - x), y - x
\right \rangle + \Psi(y) \quad \forall y.
\end{align*}
Since $f$ is strongly convex, then $X^*$ is a singleton and by
taking $y = \bar{x}$ we obtain:
\begin{align*}
& \left \langle \nabla f(x) + W(T_{[N]}(x) -  x), T_{[N]}(x) - x
\right \rangle + \Psi( T_{[N]}(x)) \\
& \qquad \leq \left \langle \nabla f(x) + W(T_{[N]}(x) - x), \bar{x}
 - x \right \rangle +  \Psi(\bar{x}).
\end{align*}
On the other hand from the optimality conditions for $\bar{x}$ and
convexity of $\Psi$ we get:
\[  \Psi(\bar{x}) + \langle \nabla f(\bar{x}), \bar{x} \rangle \leq \Psi(T_{[N]}(x)) +
\langle \nabla f(\bar{x}), T_{[N]}(x) \rangle.
\]
By adding  up the above two inequalities we obtain:
\begin{align*}
& \| T_{[N]}(x) -  x\|_W^2 + \left \langle \nabla f(x) - \nabla
f(\bar{x}), x - \bar{x} \right \rangle \\
& \qquad \leq  \left \langle \nabla f(\bar{x}) - \nabla f(x),
T_{[N]}(x) - x \right \rangle + \left \langle W(T_{[N]}(x) - x ),
\bar{x} - x \right \rangle.
\end{align*}
Now, from strong convexity \eqref{strongconvexity} and Lipschitz
continuity \eqref{gradient_lip} we get:
\begin{align*}
& \| T_{[N]}(x) -  x\|_W^2 +  \sigma_{W} \| x -\bar{x}\|_W^2  \leq 2 \|
x -\bar{x}\|_W   \; \|T_{[N]}(x) - x \|_W.
\end{align*}
Dividing  now both sides of this inequality by $\|x- \bar{x}\|_W$,
we obtain:
\begin{equation*}
\|x-\bar{x}\|_W \leq   \frac{2}{\sigma_W } \|\nabla^+F(x)\|_W,
\end{equation*}
i.e.  $\kappa_1=  \frac{2}{\sigma_W } $ and $\kappa_2=0$ in the Definition~\ref{error_bound}
of generalized error bound functions.


\subsection{Case 2: $\Psi$ indicator function of a  polyhedral set}
\label{sec_case2}
Another important category of problems \eqref{gen_form} that we
consider has the following objective function:
\begin{align}
\min_{x \in \rset^n } & \; F(x) \quad  \left (= \tilde{f} (P x) +
c^T x  +   \bo{I}_X(x) \right ), \label{problem1}
\end{align}
where $f(x) =\tilde{f}(Px) + c^T x$ is a smooth convex function, $P
\in \rset^{p \times n}$ is a constant matrix upon which we make no
assumptions and $\Psi(x)= \bo{I}_X(x)$ is the indicator function of
the polyhedral set $X$. Note that an objective function $F$ with the structure  in the form \eqref{problem1}  appears in many applications, see e.g.  the dual problem
\eqref{conv_conjug} obtained from the primal formulation \eqref{prob_sum} given  in Section \ref{sub_sec_motiv}.  Now, for proving the generalized error bound
property, we require that $f$ satisfies the following assumption:

\begin{assumption}
\label{ass_lip_conv}
We consider that $f(x)=\tilde{f}(Px)+c^T x$  satisfies Assumption \ref{lip_fi}.
We also assume that $\tilde{f}
(z)$ is strongly convex in $z$ with a constant $\sigma$ and  the
set of optimal solutions $X^*$ for problem \eqref{gen_form}    is
bounded.
\end{assumption}

\noindent  For problem \eqref{problem1}, functions $f$ under which
the set $X^*$ is bounded include e.g.  continuously differentiable
coercive functions \cite{MaZha:13}. Also, if \eqref{problem1} is a
dual formulation of   a primal problem \eqref{prob_sum} for which
the Slater condition holds, then by Theorem 1 of \cite{Man:85} we
have that the set of optimal Lagrange multipliers, i.e. $X^*$ in
this case, is compact.  Also, for $\Psi(x)= \bo{I}_X(x)$ we only
assume that $X$ is a polyhedron (possibly unbounded).

\noindent Our approach for proving the generalized  error bound
property is in a way similar to the one in
\cite{LuoTse:93,TseYun:09,WanLin:13}. However, our results are more
general in the sense that they hold globally,  while in
\cite{LuoTse:93,TseYun:09} the authors prove their results only
locally   and in the sense  that  we allow  the constraints set $X$
to be an unbounded polyhedron as opposed to the recent results in
\cite{WanLin:13} where the authors  show an error bound like
property  only for bounded polyhedra or for the entire space
$\rset^{n}$. This extension  is very important since it allows us
e.g.  to tackle the dual formulation of   a primal problem
\eqref{prob_sum} in which $X = \rset^n_+$ is the nonnegative orthant
and which appears in many practical applications. Last but not least
important is that our  error bound definition and gradient mapping
introduced  in this paper is more general than the one used in the
standard analysis of the classical error bound property (see e.g.
\cite{LuoTse:93,TseYun:09,WanLin:13}), as we can see from the
following example:

\begin{example} \label{exgeb}
Let us consider the following quadratic problem: $\min_{x \in
\rset^2_+} 1/2(x_1 - x_2)^2 +  x_1+x_2$.  We can easily see that
$X^* =\{0\}$  and thus this example satisfies Assumption
\ref{ass_lip_conv}.  Clearly, for this example the generalized error
bound property \eqref{EEBF} holds with e.g. $\kappa_1=\kappa_2 =1$.
However,  there is no finite constant $\kappa$  satisfying the
classical error bound property \cite{LuoTse:93,TseYun:09,WanLin:13}:
$\|x-\bar{x}\|_W \leq  \kappa  \|\nabla^+ F(x)\|_W$ for all $x \in
\rset^2_+$ (we can see this by taking  $x_1=x_2 \geq 1$ in the
previous inequality).
\end{example}

\noindent By definition, given that $\Psi(x)$ is a set  indicator function, we
observe that the gradient mapping of $F$ can be expressed in this
case as:
\begin{equation*}
 \nabla^+ F(x)=x-\Pi_X^W \left (x-W^{-1}\nabla f(x) \right ),
\end{equation*}
and  also note that $x^*$ is an optimal solution of \eqref{problem1} and of \eqref{gen_form} if and only if  $\nabla^+ F(x^*)=0$. The
following lemma establishes the Lipschitz continuity of $\nabla^+ F(x)$:
\begin{lemma}\label{prox_gradient_lip}
For a function $F$ whose smooth component satisfies Assumption
\ref{lip_fi}, we have that
\begin{equation}
\|\nabla^+ F(x)-\nabla ^+F(y)\|_W \leq 3\| x-y\|_W  \quad \forall x,
y \in X. \label{lip_proj_gradient}
\end{equation}
\end{lemma}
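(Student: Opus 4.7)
The plan is to expand the definition of $\nabla^+ F$, split by triangle inequality, and then use two ingredients already available in the paper: non-expansiveness of the projection $\Pi_X^W$ in the norm $\|\cdot\|_W$, and the Lipschitz continuity of $\nabla f$ in the dual-norm pair $(\|\cdot\|_W, \|\cdot\|_{W^{-1}})$ established in \eqref{gradient_lip}.

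First I would write
\[
\nabla^+ F(x) - \nabla^+ F(y) = (x-y) - \bigl[\Pi_X^W(x - W^{-1}\nabla f(x)) - \Pi_X^W(y - W^{-1}\nabla f(y))\bigr]
\]
and apply the triangle inequality in $\|\cdot\|_W$ to bound the left-hand side by $\|x-y\|_W$ plus the $\|\cdot\|_W$-distance between the two projected points. Since $\Pi_X^W$ is the projection onto the convex set $X$ in the norm $\|\cdot\|_W$, it is non-expansive in that same norm, so that second distance is at most $\|(x-y) - W^{-1}(\nabla f(x) - \nabla f(y))\|_W$. A further triangle inequality splits this into $\|x-y\|_W$ and $\|W^{-1}(\nabla f(x) - \nabla f(y))\|_W$.

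The key identity to invoke next is the fact that for any vector $d$ and any positive diagonal $W$ one has $\|W^{-1} d\|_W = \|d\|_{W^{-1}}$ (direct computation: $d^T W^{-1} W W^{-1} d = d^T W^{-1} d$). Applying this with $d = \nabla f(x) - \nabla f(y)$ and then using \eqref{gradient_lip} gives $\|W^{-1}(\nabla f(x) - \nabla f(y))\|_W \leq \|x-y\|_W$. Adding the three contributions yields exactly the bound $3\|x-y\|_W$.

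There is no real obstacle here; the only mild subtlety is remembering to use the weighted projection's non-expansiveness in its own norm $\|\cdot\|_W$ (not the Euclidean norm) and the dual-norm switch $\|W^{-1}d\|_W = \|d\|_{W^{-1}}$ so that Lemma on gradient Lipschitzness in the $W^{-1}$-norm applies cleanly. Note that the hypothesis $x,y \in X$ is not actually needed in the argument since $\Pi_X^W$ is non-expansive on all of $\mathbb{R}^n$; however, stating the bound on $X$ is sufficient for the subsequent use of this lemma in the error-bound analysis.
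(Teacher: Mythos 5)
Your proof is correct and follows essentially the same route as the paper's: expand the definition of $\nabla^+ F$, apply the triangle inequality, use non-expansiveness of the ($W$-norm) proximal/projection operator as in \eqref{prox_property1}, and finish with the identity $\|W^{-1}d\|_W=\|d\|_{W^{-1}}$ together with the Lipschitz bound \eqref{gradient_lip}. The only cosmetic difference is that you phrase it with $\Pi_X^W$ while the paper writes the same steps with the general $\prox$ operator via \eqref{T_prox}, which coincide here since $\Psi=\bo{I}_X$.
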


\begin{proof}
 By definition of $\nabla ^+ F(x)$ we have that:
\begin{align*}
&\|\nabla^+ F(x)\!-\!\nabla^+ F(y)\|_W \\
& \!=\! \|x \!-\! y+T_{[N]}(y)\!-\!T_{[N]}(x)\|_W \\
& \hspace{-0.2cm} \overset{\eqref{T_prox}}{\leq } \|x\!-\!y\|_W \!+\! \|\prox(x\!-\!W^{-1} \nabla f(x)) \!-\!\prox(y\!-\!W^{-1} \nabla f(y)) \|_W \\
& \hspace{-0.2cm} \overset{\eqref{prox_property1}}{\leq} \|x\!-\!y\|_W+\|x-y+W^{-1} (\nabla f(y)-\nabla f(x))\|_W \\
& \leq 2 \|x-y\|_W+\|\nabla f(x)-\nabla f(y)\|_{W^{-1}} \\
& \overset{\eqref{gradient_lip}}{\leq} 3\|x-y\|_W,
\end{align*}
and the proof is complete.
\end{proof}

\noindent The following lemma introduces an important  property for
the  operator $\Pi_X^W$.
\begin{lemma}\label{lemma_a1}
Given a convex set $X$, its projection operator $\Pi_X^W$  satisfies:
\begin{equation}
 \left \langle W \left ( \Pi_X^W(x)-x \right )  ,\Pi_X^W(x)-y \right \rangle \leq 0\quad \forall y \in X. \label{proj_properties}
\end{equation}
\end{lemma}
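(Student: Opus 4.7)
The plan is to invoke the first-order optimality conditions for the convex minimization problem that defines $\Pi_X^W(x)$. By definition, $\Pi_X^W(x) = \arg\min_{z \in X} \tfrac{1}{2}\|z - x\|_W^2$, which is the minimization of a smooth, strongly convex (since $W \succ 0$) quadratic over the convex set $X$. The gradient of the objective $\phi(z) = \tfrac{1}{2}(z-x)^T W (z-x)$ at a point $z$ is $\nabla \phi(z) = W(z-x)$.

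First, I would apply the standard variational-inequality characterization of the minimizer of a differentiable convex function over a convex set: if $z^\star \in X$ minimizes $\phi$ over $X$, then $\langle \nabla \phi(z^\star), y - z^\star\rangle \geq 0$ for every $y \in X$. Substituting $z^\star = \Pi_X^W(x)$ and $\nabla \phi(z^\star) = W(\Pi_X^W(x) - x)$ yields
\[
\langle W(\Pi_X^W(x) - x), y - \Pi_X^W(x) \rangle \geq 0 \quad \forall y \in X,
\]
and multiplying by $-1$ gives exactly \eqref{proj_properties}.

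An equivalent route, should one prefer a reduction to the familiar Euclidean case, is to use the fact that $W$ is diagonal and positive definite, so $W^{1/2}$ exists. Setting $\tilde{x} = W^{1/2} x$ and $\tilde{X} = W^{1/2} X$ (still convex), one checks that $\Pi_X^W(x) = W^{-1/2} \Pi_{\tilde{X}}(\tilde{x})$, where $\Pi_{\tilde X}$ is the ordinary Euclidean projection onto $\tilde X$. The classical characterization $\langle \Pi_{\tilde X}(\tilde x) - \tilde x, \tilde y - \Pi_{\tilde X}(\tilde x)\rangle \geq 0$ for all $\tilde y \in \tilde X$ then translates back, under $\tilde y = W^{1/2} y$, to the weighted-norm inequality claimed.

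There is no real obstacle here: the result is essentially the standard obtuse-angle property of projections, transferred from the Euclidean inner product to the $W$-inner product $\langle u,v\rangle_W = u^T W v$. The only point worth being careful about is checking that the gradient computation and the direction of the inequality are consistent with the sign convention of the statement, which is why the optimality condition $\langle \nabla \phi(z^\star), y - z^\star\rangle \geq 0$ should be written out explicitly before rearranging.
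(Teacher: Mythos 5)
Your proposal is correct and is essentially the same argument as the paper's: the paper simply derives the variational-inequality characterization from scratch by testing the minimality of $\Pi_X^W(x)$ against the convex combination $d=\alpha y+(1-\alpha)\Pi_X^W(x)$, expanding the squared $W$-norm, dividing by $\alpha$, and letting $\alpha\downarrow 0$, which is exactly the standard proof of the optimality condition you invoke directly. The sign bookkeeping in your version matches \eqref{proj_properties}, so there is no gap.
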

\begin{proof}
Following the definition of $\Pi_X^W$, we have that:
\begin{equation}
  \|x-\Pi_X^W(x)\|_W^2 \leq \|x-d\|_W^2\quad {\bl \forall d \in X}.  \label{norm_w_euclid}
\end{equation}
Since  $X$ is a convex set, consider a point:
\begin{equation*}
 d=\alpha y +(1-\alpha)\Pi_X^W(x) \in X \quad  \forall y \in X, \alpha \in [0,1],
\end{equation*}
and by \eqref{norm_w_euclid} we obtain:
\begin{equation*}
 \|x-\Pi_X^W(x)\|_W^2 \leq  \|x-(\alpha y +(1-\alpha)\Pi_X^W(x))\|_W^2.
\end{equation*}
If we elaborate the squared norms in the inequality above we arrive at:
\begin{equation*}
 0 \leq \alpha \left  \langle W\left ( \Pi_X^W(x)-x \right ),y-\Pi_X^W(x) \right \rangle +\frac{1}{2} \alpha^2 \|y-\Pi_X^W(x)\|^2.
\end{equation*}
If we divide both sides by $\alpha$ and let $\alpha \downarrow 0$, we get \eqref{proj_properties}.
\end{proof}

\noindent   The following lemma establishes an important property between
$\nabla f(x)$ and $\nabla ^+F(x)$.
\begin{lemma}\label{grad_proj_standard}
Given a function $f$ that satisfies \eqref{gradient_lip} and a convex set $X$, then the following inequality holds:
\begin{equation*}
 \left \langle \nabla f(x)-\nabla f(y), x-y  \right \rangle \leq 2 \| \nabla^+F(x)-\nabla^+F(y)\|_W \|x-y\|_W \quad \forall x,y \in X.
\end{equation*}
\end{lemma}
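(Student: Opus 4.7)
The idea is to use the variational characterization of the projection provided by Lemma \ref{lemma_a1} applied at both $x$ and $y$, exploiting the fact that since $\Psi=\bo{I}_X$, the proximal gradient mapping takes the simple form $\nabla^+F(x)=x-\Pi_X^W(x-W^{-1}\nabla f(x))$, so in particular both $x-\nabla^+F(x)$ and $y-\nabla^+F(y)$ are elements of $X$ and can be used as feasible test points.

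More concretely, I would apply Lemma \ref{lemma_a1} at the point $u=x-W^{-1}\nabla f(x)$, whose projection is $\Pi_X^W(u)=x-\nabla^+F(x)$, with the feasible test point $y-\nabla^+F(y)\in X$; this yields
\[
\bigl\langle \nabla f(x)-W\,\nabla^+F(x),\,(x-\nabla^+F(x))-(y-\nabla^+F(y))\bigr\rangle\leq 0.
\]
Swapping the roles of $x$ and $y$ gives the symmetric inequality, and summing the two produces, with the abbreviations $d=x-y$ and $g=\nabla^+F(x)-\nabla^+F(y)$, the key relation
\[
\bigl\langle \nabla f(x)-\nabla f(y),\,d-g\bigr\rangle \leq \bigl\langle W g,\,d-g\bigr\rangle.
\]

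From there the target inequality follows by elementary manipulations. I would rewrite the left-hand side as $\langle \nabla f(x)-\nabla f(y),d\rangle - \langle \nabla f(x)-\nabla f(y),g\rangle$ and bound the latter term via Cauchy--Schwarz in the dual norms $\|\cdot\|_{W^{-1}},\|\cdot\|_W$ combined with the Lipschitz property \eqref{gradient_lip}, which gives $\langle \nabla f(x)-\nabla f(y),g\rangle\leq \|d\|_W\|g\|_W$. For the right-hand side I would expand $\langle Wg,d-g\rangle = \langle Wg,d\rangle - \|g\|_W^2 \leq \|g\|_W\|d\|_W - \|g\|_W^2$. Adding these two bounds yields
\[
\bigl\langle \nabla f(x)-\nabla f(y),d\bigr\rangle \leq 2\|d\|_W\|g\|_W - \|g\|_W^2 \leq 2\|d\|_W\|g\|_W,
\]
which is exactly the desired estimate.

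The only subtle point is the correct choice of the feasible test point $z$ in Lemma \ref{lemma_a1}: one must use the projection associated with the \emph{other} point (i.e.\ $y-\nabla^+F(y)$ when applying the lemma at $x$) so that after symmetrization the gradient-mapping differences align into a single term $g$. The rest is bookkeeping together with the crucial negative term $-\|g\|_W^2$ produced by the expansion of $\langle Wg,d-g\rangle$, which is what keeps the constant at $2$ rather than something larger.
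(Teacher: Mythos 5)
Your proof is correct and follows essentially the same route as the paper: apply Lemma \ref{lemma_a1} at $x-W^{-1}\nabla f(x)$ with the other point's projection as the test point, symmetrize, and finish with Cauchy--Schwarz in the dual pair $\|\cdot\|_{W^{-1}},\|\cdot\|_W$ together with \eqref{gradient_lip}; the only difference is that you sum the two variational inequalities before expanding, while the paper expands first. One tiny quibble: the $-\|g\|_W^2$ term is simply nonpositive and discarded, so it is not actually what keeps the constant at $2$ --- that comes from the two separate $\|d\|_W\|g\|_W$ bounds.
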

\begin{proof}
Denote $z=x-W^{-1}\nabla f(x)$, then by replacing $x=z$ and $y=\Pi_X^W(y-W^{-1}\nabla f(y)) $ in  Lemma \ref{lemma_a1} we obtain the following inequality:
\begin{equation*}
\left \langle W \left ( \Pi_X^W (z )- x \right) + \nabla f(x) , \Pi_X^W\!\! \left (z\right )\!-\!\Pi_X^W\!\! \left (y-W^{-1} \nabla f(y) \right ) \right \rangle \leq 0.
\end{equation*}
Through the definition of the projected gradient mapping, this
inequality can be rewritten as:
\begin{equation*}
 \left \langle \nabla f(x) - W \nabla^+F(x), x- \nabla^+F(x) -y+\nabla^+F(y) \right \rangle \leq 0.
\end{equation*}
If we further elaborate the inner product we obtain:
\begin{align}
\left \langle \nabla f(x), x-y \right \rangle & \leq \langle W \nabla^+F(x),x\!-\!y \rangle \!+\! \langle \nabla f(x),\nabla^+F(x) \!-\! \nabla^+F(y)  \rangle\label{inner_ineq} \\
&   \quad \!-\! \langle W \nabla^+F(x),\nabla ^+F(x) \!-\! \nabla^+F(y) \rangle. \nonumber
\end{align}
By adding two copies of \eqref{inner_ineq} with $x$ and $y$ interchanged we have:
\begin{align*}
& \left \langle \nabla f(x) - \nabla f(y), x - y \right \rangle \\
 & \leq \! \left \langle W (\nabla^+F(x) - \nabla^+F(y)), x - y\right \rangle +
  \left \langle \nabla f(x) - \nabla f(y), \nabla^+F(x) - \nabla^+F(y)\right \rangle \\
& \qquad - \|\nabla^+F(x) - \nabla^+F(y)\|_W^2 \\
& \leq  \left \langle W(\nabla^+F(x) - \nabla^+F(y)), x - y\right
\rangle  +  \left \langle \nabla f(x) - \nabla f(y), \nabla^+F(x)
 - \nabla^+F(y)\right \rangle.
\end{align*}
From this inequality, through Cauchy-Schwartz and \eqref{gradient_lip} we arrive at:
\begin{align*}
 \left \langle \nabla f(x) \!-\!\nabla f(y), x\!-\!y \right \rangle & \leq \|\nabla^+F(x) \!-\!\nabla^+F(y) \|_W \left (\|x-y\|_W+\|\nabla f(x)-\nabla f(y)\|_W^{-1} \right ) \\
& \leq 2 \| \nabla ^+ F(x) -\nabla ^+ F(y) \|_W \|x-y\|_W,
\end{align*}
and the proof is complete.
\end{proof}

\noindent  We now introduce the following lemma regarding the
optimal set $X^*$,  see also \cite{LuoTse:93, WanLin:13}.
\begin{lemma}\label{eq_const_opt}
Under Assumption \ref{ass_lip_conv}, there exists a unique $z^*$
such that:
\begin{equation*}
 P x^*=z^*\quad \forall x^* \in X^*,
\end{equation*}
and furthermore:
\begin{equation*}
 \nabla f(x)=P^T \nabla \tilde{f}(z^*)+c
\end{equation*}
is  constant for all $x \in Q=\left \{y  \in X: \; Py=z^* \right
\}$.
\end{lemma}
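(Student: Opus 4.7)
The plan is to reduce the claim to the strong convexity of $\tilde{f}$ in the $z$-variable, noting that the full objective $f$ is only convex (not strongly convex) in $x$ because the linear map $P$ may have a nontrivial nullspace. First I will pick two arbitrary optimal solutions $x_1^*, x_2^* \in X^*$. Since $X^*$ is the set of minimizers of a convex function over the convex polyhedral feasible set $X$, the set $X^*$ is itself convex, so for any $\alpha \in (0,1)$ the point $x_\alpha = \alpha x_1^* + (1-\alpha) x_2^*$ belongs to $X^*$ and satisfies $F(x_\alpha) = F^*$.

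Next I would exploit the structure $F(x) = \tilde{f}(Px) + c^T x + \bo{I}_X(x)$. Along the segment from $x_1^*$ to $x_2^*$ the indicator term vanishes and the linear term $c^T x$ is affine, so
\begin{equation*}
F(x_\alpha) = \tilde{f}(\alpha P x_1^* + (1-\alpha) P x_2^*) + \alpha c^T x_1^* + (1-\alpha) c^T x_2^*.
\end{equation*}
If $P x_1^* \neq P x_2^*$, strong convexity of $\tilde{f}$ in its argument yields the strict inequality
\begin{equation*}
\tilde{f}(\alpha P x_1^* + (1-\alpha) P x_2^*) < \alpha \tilde{f}(Px_1^*) + (1-\alpha) \tilde{f}(Px_2^*),
\end{equation*}
and combined with the linear term this gives $F(x_\alpha) < \alpha F(x_1^*) + (1-\alpha) F(x_2^*) = F^*$, contradicting the optimality of $x_1^*$ and $x_2^*$. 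Hence $Px_1^* = Px_2^*$, so there is a unique vector $z^*$ with $P x^* = z^*$ for all $x^* \in X^*$.

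For the second assertion, a direct chain rule computation gives $\nabla f(x) = P^T \nabla \tilde{f}(Px) + c$ for every $x \in \rset^n$. Restricting to $x \in Q = \{y \in X : Py = z^*\}$ forces $Px = z^*$, and therefore $\nabla f(x) = P^T \nabla \tilde{f}(z^*) + c$, which is the same vector for every $x \in Q$. I do not expect a serious obstacle here: the only subtle point is justifying convexity of $X^*$ and the strictness step, both of which follow directly from the strong convexity of $\tilde{f}$ in Assumption \ref{ass_lip_conv} together with the convexity of $X$. The boundedness of $X^*$ in Assumption \ref{ass_lip_conv} is not needed for this lemma but will be used in the subsequent error bound argument.
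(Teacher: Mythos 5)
Your proof is correct and follows essentially the same route as the paper: the paper also takes two optimal solutions, evaluates $f$ at their midpoint (where the objective value must still equal $F^*$), and invokes the strong convexity of $\tilde{f}$ to force $Px_1^*=Px_2^*$, after which the constancy of $\nabla f(x)=P^T\nabla\tilde{f}(z^*)+c$ on $Q$ is immediate from the chain rule. Your version merely makes explicit the convexity of $X^*$ and the strict-inequality contradiction that the paper leaves implicit.
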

\begin{proof}
 Given that $f(x)$ as defined in problem \eqref{problem1} is a convex function, then for any two optimal solutions $x_1^*, \; x_2^* \in X^*$ we obtain:
\begin{align*}
 f\left ((x_1^*+x_2^*)/2 \right )=\frac{1}{2} \left ( f(x_1^*)+f(x_2^*) \right ),
\end{align*}
which by the definition of $f$ is equivalent to:
\begin{align*}
 \tilde{f}\left ( (P x_1^*+Px_2^*)/2 \right ) +\frac{1}{2} c^T (x_1^*+x_2^*)=\frac{1}{2} \left ( \tilde{f}(Px_1^*)+\tilde{f}(Px_2^*)+c^T(x_1^*+x_2^*) \right ).
\end{align*}
If we substract $c^T(x_1^*+x_2^*)$ in both sides and by the strong
convexity of $\tilde{f}$ we have that $P x_1^*=Px_2^*$. Thus,
$z^*=Px^*$ is unique. From this, it is straightforward to see that $
\nabla f(x)=P^T \nabla \tilde{f}(z^*)+c$ is constant for all $x \in
Q$.
\end{proof}

\noindent  Consider now a point $x \in X$ and denote by $\bo{q}=\Pi^W_Q(x)$ the
projection of the point $x$ onto the set $Q=\left \{y  \in X: \; Py=z^* \right
\}$, as defined in Lemma
\ref{eq_const_opt}, and by $\bar{\bo{q}}$ its projection onto  the
optimal set $X^*$, i.e. $\bar{\bo{q}}=\Pi_{X^*}^W (\bo{q})$. Given
the set $Q$, the distance to the optimal set can be decomposed as:
\begin{equation*}
\|x-\bar{x}\|_W \leq \|x-\bar{\bo{q}}\|_W \leq
\|x-\bo{q}\|_W+\|\bo{q}  -\bar{\bo{q}}\|_W.
\end{equation*}
Given this inequality, the outline for proving the  generalized
error bound property (GEBP) from \eqref{EEBF} in this case is to
obtain appropriate upper bounds for $\|x-\bo{q}\|_W$ and $\|\bo{q}
-\bar{\bo{q}}\|_W$.  In the sequel we introduce lemmas for
establishing bounds for these two terms.
\begin{lemma}\label{Q_x_bound}
 Under Assumption \ref{ass_lip_conv}, there exists a constant $\gamma_1$ such that:
\begin{equation*}
 \|x-\bo{q}\|_W^2 \leq \gamma_1^2 \frac{2}{\sigma} \|\nabla^+F(x)\|_W \|x-\bar{x}\|_W \quad \forall x \in X.
\end{equation*}
\end{lemma}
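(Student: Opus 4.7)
The plan is to prove the bound in two stages: first control $\|x-\bo{q}\|_W$ by the residual $\|Px - z^*\|$ via a Hoffman-type estimate, then control this residual by $\|\nabla^+F(x)\|_W \, \|x-\bar{x}\|_W$ using the strong convexity of $\tilde f$ and Lemma \ref{grad_proj_standard}.

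For the first stage, write $X = \{y : Ay \leq b\}$ as a polyhedron. Then
\[ Q = \{y \in \rset^n : Ay \leq b, \; Py = z^*\} \]
is a polyhedron obtained by adding equality constraints to $X$, and $\bo{q} = \Pi_Q^W(x)$ for $x \in X$. By Hoffman's lemma (applied in the $W$-norm by conjugating with $W^{1/2}$, or equivalently in the Euclidean norm together with equivalence of norms $c_W \|\cdot\| \leq \|\cdot\|_W \leq C_W \|\cdot\|$), there exists a constant $\gamma_1 > 0$ depending only on $A$, $P$ and $W$ such that for every $x \in X$
\[ \|x - \bo{q}\|_W \;\leq\; \gamma_1 \, \|Px - z^*\|. \]

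For the second stage, strong convexity of $\tilde f$ with parameter $\sigma$ gives, upon adding the two first-order inequalities at $Px$ and $z^*$,
\[ \langle \nabla\tilde f(Px) - \nabla\tilde f(z^*),\; Px - z^*\rangle \;\geq\; \sigma\, \|Px - z^*\|^2. \]
Now, using $f(y) = \tilde f(Py) + c^Ty$ we have $\nabla f(y) = P^T\nabla\tilde f(Py) + c$, and by Lemma \ref{eq_const_opt} the right-hand side is constant on $X^*$. Choosing the optimal point $\bar{x} = \Pi_{X^*}^W(x)$ so that $P\bar{x} = z^*$, we obtain
\[ \langle \nabla f(x) - \nabla f(\bar{x}),\; x - \bar{x}\rangle \;=\; \langle \nabla \tilde f(Px) - \nabla \tilde f(z^*),\; Px - z^*\rangle \;\geq\; \sigma\,\|Px - z^*\|^2. \]
Applying Lemma \ref{grad_proj_standard} with $y = \bar{x}$, and noting that $\nabla^+F(\bar{x}) = 0$ since $\bar{x} \in X^*$, the left-hand side is bounded above by $2\,\|\nabla^+F(x)\|_W\,\|x-\bar{x}\|_W$, hence
\[ \|Px - z^*\|^2 \;\leq\; \frac{2}{\sigma}\, \|\nabla^+F(x)\|_W\, \|x - \bar{x}\|_W. \]
Squaring the Hoffman estimate and chaining the two displayed inequalities yields the claim with constant $\gamma_1^2 \cdot 2/\sigma$.

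The main obstacle is the first stage: Hoffman's bound is classically stated in Euclidean norm, so one must either (i) rescale by $W^{1/2}$ and apply Hoffman's lemma to the polyhedron $\{z : AW^{-1/2}z \leq b,\; PW^{-1/2}z = z^*\}$, or (ii) cite Hoffman in the Euclidean norm and absorb $W$-norm equivalence constants into $\gamma_1$. The rest of the argument is a routine combination of strong convexity, the structural identity of Lemma \ref{eq_const_opt}, and the already-proved Lemma \ref{grad_proj_standard}.
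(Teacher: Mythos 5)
Your proof is correct and follows essentially the same route as the paper: a Hoffman-type bound (the paper invokes Robinson's Corollary 2.2, stated directly in the $\|\cdot\|_W$ norm for the system $Ay\leq b,\; Py=z^*$, combined with the fact that $\bo{q}$ is the $W$-projection onto $Q$) to get $\|x-\bo{q}\|_W\leq\gamma_1\|Px-z^*\|$, followed by strong convexity of $\tilde f$ together with Lemma \ref{grad_proj_standard} and $\nabla^+F(\bar{x})=0$ to bound $\|Px-z^*\|^2$. The only cosmetic difference is your explicit handling of the norm in which the Hoffman constant is taken, which the paper glosses over.
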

\begin{proof}
Corollary 2.2 in \cite{Rob:73} states that if we have the following
two sets of constraints:
\begin{align}
 A y \leq b_1, \; P y=d_1 \label{constraint1} \\
A y \leq b_2, \; P y=d_2, \label{constraint2}
\end{align}
then there exists a finite constant $\gamma_1$  such that for a point $y_1$
which satisfies the first set of constraints and a point $y_2$ which
satisfies  the second one we have:
\begin{equation}
 \|y_1-y_2\|_W \leq \gamma_1 \left \|\begin{matrix}
 \Pi_{\rset^+} (b_1 -b_2) \\ d_1-d_2 \end{matrix} \right \|_W. \label{constraint_bound}
\end{equation}
Furthermore,  $\gamma_1$ is only dependent on the
matrices $A$ and $P$ (see \cite{Rob:73} for more details).
Given that $X$ is polyhedral, we can express it as $X=\left \{x \in
\rset^n : A x \leq b \right \}$. Thus, for $x \in X$, we can take
$b_1=b$, $d_1=Px$ in \eqref{constraint1}, and $b_2=b$, $d_2=z^*$ in
\eqref{constraint2} such that:
\begin{align}
 A y \leq b, \; P y=Px \label{constraint1_1} \\
A y \leq b, \; P y=z^* .\label{constraint2_1}
\end{align}

\noindent  Evidently, a point $x \in X$ is feasible for \eqref{constraint1_1}. Consider now a point $y_2$ feasible for
\eqref{constraint2_1}. Therefore, from \eqref{constraint_bound} there exists a constant $\gamma_1$ such that:
\begin{equation*}
\|x-y_2\|_W \leq \gamma_1 \|Px-z^*\|_W \quad \forall x \in X.
\end{equation*}
Furthermore, from the definition of $\bo{q}$ we get:
\begin{equation}
 \|x-\bo{q}\|_W^2 \leq \|x-y_2\|_W^2  \leq  \gamma_1^2
  \|Px-z^*\|_W^2  \quad \forall x \in X. \label{Q_distance}
\end{equation}
From the strong convexity of $\tilde{f}(z)$ we have the following property:
\begin{equation*}
 \sigma \|Px-z^*\|_W^2 \leq \left \langle \nabla \tilde{f}(Px)-\nabla
  \tilde{f}(P\bar{x}), Px-P \bar{x} \right \rangle=
  \left \langle \nabla f(x)-\nabla f(\bar{x}), x-\bar{x} \right \rangle
\end{equation*}
for all $\bar{x} \in X^*$. From this inequality and Lemma
\ref{grad_proj_standard} we obtain:
\begin{equation*}
 \sigma \|Px-z^*\|_W^2 \leq 2 \| \nabla^+F(x)- \nabla^+F(\bar{x}) \|_W \|x-\bar{x}\|_W.
\end{equation*}
Since $\bar{x} \in X^*$, it is well known that $\nabla^+F(\bar{x})=0$. Thus, from this and \eqref{Q_distance} we get:
\begin{equation*}
 \|x-\bo{q}\|_W^2  \leq   \gamma_1^2 \frac{2}{\sigma} \|\nabla^+F(x)\|_W \|x-\bar{x}\|_W
\end{equation*}
and the proof is complete.
\end{proof}

\noindent   Note that, if in \eqref{problem1} we have $c=0$, then by definition
we have that $Q=X^*$, and thus  the term $\|\bo{q}
-\bar{\bo{q}}\|_W=0$. In such a case, also note that
$\bo{q}=\bar{x}$ and through the previous lemma, in which we
established an upper bound for $\|x-\bo{q}\|_W$, we can prove outright
the error bound property \eqref{EEBF} with $\kappa_1 =\gamma_1^2
\frac{2}{\sigma}$ and $\kappa_2=0$. If $c \not = 0$, the following
two lemmas are introduced to investigate the distance between
a point and a solution set of a linear programming problem and then
to establish a bound for $\|\bo{q}-\bar{\bo{q}}\|_W$.

\begin{lemma}
\label{lemma_lip_bound}
Consider an LP on a nonempty polyhedral set $Y$:
\begin{align}
 \min_{y \in Y} b^T y \label{lp_gen},
\end{align}
and assume that the optimal set $Y^* \subseteq Y$ is nonempty,
convex and bounded. Let $\bar{y}$ be the projection of a point $y
\in Y$ on the optimal set $Y^*$. For this problem we have that:
\begin{equation}
 \|y-\bar{y}\|_W \leq \gamma_2 \left ( \|y-\bar{y}\|_W+\|b\|_{W^{-1}} \right )
 \|y-\Pi_Z^W (y-W^{-1}b)\|_W \quad \forall y \in Y \label{bound_lip_lemma},
\end{equation}
where $Z$ is any closed convex set satisfying $Y \subseteq Z$ and $\gamma_2$ depends on $Y$ and $b$.
\end{lemma}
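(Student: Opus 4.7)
The plan is to establish a Hoffman-type bound for the LP together with a variational inequality coming from the $W$-norm projection onto $Z$, and then combine the two. The boundedness of $Y^*$ and the polyhedrality of $Y$ are only used to legitimately invoke Robinson's error bound from \cite{Rob:73}, in the spirit of the proof of Lemma \ref{Q_x_bound}.

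First I would rewrite the optimal set as a polyhedron sliced off by the objective. Letting $v^* = b^T \bar{y}$ denote the optimal value of \eqref{lp_gen}, for every $y \in Y$ we have $b^T y \geq v^*$, and $Y^* = \{y \in Y : b^T y \leq v^*\}$ is the intersection of the polyhedron $Y$ with a halfspace. Applying Robinson's bound (Corollary 2.2 in \cite{Rob:73}, exactly as in \eqref{constraint_bound}) to the two systems describing $y \in Y$ and $y \in Y^*$, there exists a finite constant $\gamma_2$, depending only on the data of $Y$ and on $b$, such that
\begin{equation*}
 \|y - \bar{y}\|_W \;\leq\; \gamma_2 \, \bigl(b^T y - v^*\bigr)_+ \;=\; \gamma_2 \, b^T(y - \bar{y}) \qquad \forall y \in Y,
\end{equation*}
where in the last identity I used $b^T \bar{y} = v^*$ and $b^T y \geq v^*$.

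Next I would squeeze the linear quantity $b^T(y-\bar{y})$ through the projection residual. Set $p = \Pi_Z^W\bigl(y - W^{-1}b\bigr)$ and apply the variational characterization of the $W$-projection (Lemma \ref{lemma_a1}) on $Z$ with the test point $\bar{y} \in Y^* \subseteq Y \subseteq Z$, obtaining
\begin{equation*}
 \bigl\langle W\bigl(p - y + W^{-1}b\bigr),\, p - \bar{y} \bigr\rangle \;\leq\; 0,
\end{equation*}
which after rearrangement gives
\begin{equation*}
 \langle b, \bar{y} - p\rangle \;\geq\; \langle W(y-p), \bar{y}-y\rangle + \|y-p\|_W^2.
\end{equation*}
Splitting $\langle b, y - \bar{y}\rangle = \langle b, y - p\rangle + \langle b, p - \bar{y}\rangle$, substituting the inequality above, and estimating both remaining inner products by Cauchy--Schwarz in the $W$ and $W^{-1}$ pairing yields
\begin{equation*}
 b^T(y-\bar{y}) \;\leq\; \|b\|_{W^{-1}} \|y-p\|_W + \|y-p\|_W \|y-\bar{y}\|_W - \|y-p\|_W^2 \;\leq\; \bigl(\|b\|_{W^{-1}} + \|y-\bar{y}\|_W\bigr)\,\|y-p\|_W.
\end{equation*}

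Combining this with the Hoffman-type bound from the first step immediately produces \eqref{bound_lip_lemma}. The main obstacle I anticipate is cosmetic rather than structural: verifying that Robinson's constant extracted from the two polyhedral systems can indeed be expressed in the $W$-norm (which is equivalent to the Euclidean norm since $W \succ 0$ is diagonal), and making sure the test point $\bar{y}$ used in the projection inequality lies in $Z$; both follow from the assumption $Y^* \subseteq Y \subseteq Z$ so no additional hypothesis is needed.
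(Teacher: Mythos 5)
Your proof is correct, and its second half (the variational inequality for $\Pi_Z^W$ at the shifted point $y-W^{-1}b$ tested against $\bar y$, followed by Cauchy--Schwarz in the $W$/$W^{-1}$ pairing and dropping $-\|y-p\|_W^2$) coincides with the paper's argument almost line for line. The first half, however, takes a genuinely different route to the key intermediate inequality $\|y-\bar y\|_W \leq \gamma_2\, \langle b, y-\bar y\rangle$ (this is \eqref{bound_inter} in the paper). The paper gets it from LP duality: it invokes Theorem 2 of \cite{Rob:73}, which bounds the distance of a primal--dual feasible pair to the optimal pair by the duality gap $|b^Ty - l(\mu)|$, then sets $\mu=\mu^*$ and uses strong duality $l(\mu^*)=b^T\bar y$. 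You instead observe that $Y^*=\{y\in Y:\, b^Ty\leq v^*\}$ is $Y$ cut by one extra halfspace and apply the same right-hand-side perturbation bound (Corollary 2.2 of \cite{Rob:73}) already used in Lemma \ref{Q_x_bound}, obtaining a Hoffman-type estimate $\mathrm{dist}_W(y,Y^*)\leq\gamma_2 (b^Ty-v^*)_+$; since $\bar y$ realizes that distance, the same inequality follows. Your version is more elementary in that it bypasses the dual problem, strong duality, and the existence of optimal multipliers entirely, and it makes transparent that only nonemptiness of $Y^*$ (not boundedness) is really needed for this step; the price is that your $\gamma_2$ is the Hoffman constant of the augmented system $(A;\,b^T)$ rather than Robinson's primal--dual constant, but both depend only on the data of $Y$ and on $b$, as the lemma requires. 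The one point you should make explicit is that Corollary 2.2 is applied with the perturbed right-hand side $b^Ty$ depending on the evaluation point, with a constant uniform over all right-hand sides --- but this is exactly how the paper itself uses that corollary in Lemma \ref{Q_x_bound}, so it is unobjectionable.
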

\begin{proof}
 Because the solution set $Y^*$ is nonempty, convex and bounded, then the linear program
 \eqref{lp_gen} is equivalent to the following problem:
\begin{align*}
  \min_{y \in Y^*} b^T y,
\end{align*}
and as a result, the linear program \eqref{lp_gen} is solvable. Now,
by the duality theorem of linear programming, the dual problem of
\eqref{lp_gen}:
\begin{equation}
 \max_{\mu \in Y'} l(\mu) \label{lp_dual},
\end{equation}
 is well defined, solvable and strong duality holds, where $Y' \subseteq \rset^m$ is the dual feasible set.  For any pair of primal-dual
feasible points $(y,\mu)$ for problems \eqref{lp_gen} and
\eqref{lp_dual}, we have a corresponding pair of optimal solutions
$(y^*,\mu^*)$. By the solvability of \eqref{lp_gen} we have from
Theorem 2  of \cite{Rob:73}, that there exists a constant $\gamma_2$
depending on $Y$ and $b$ such that we have the bound:
\begin{equation*}
 \left \| \begin{matrix} y-y^* \\ \mu-\mu^* \end{matrix}
 \right \|_{\text{diag}(W,I_m)} \leq \gamma_2 |b^Ty - l(\mu)|.
\end{equation*}

\noindent By strong duality, we have that $l(\mu^*) = b^T \bar{y}$. Thus,
taking $\mu=\mu^*$ and through the optimality conditions of
\eqref{lp_gen} we obtain:
\begin{equation*}
\|y-y^*\|_W \leq \gamma_2 \langle b,y-\bar{y} \rangle.
\end{equation*}
From this inequality and $\|y-\bar{y}\|_W \leq \|y-y^*\|_W$ we arrive at:
\begin{equation}
\|y-\bar{y}\|_W \leq \gamma_2  \langle b,y-\bar{y} \rangle.
\label{bound_inter}
\end{equation}
By Lemma \ref{lemma_a1}, we have that:
\begin{equation*}
 \left \langle  W \left ( \Pi_Z^W\left (y-W^{-1} b \right)-\left (y-W^{-1} b
  \right) \right ),  \Pi_Z^W(y-W^{-1} b)-\bar{y} \right \rangle \leq
  0.
\end{equation*}
This inequality can be rewritten as:
\begin{align*}
 \left \langle b, y-\bar{y} \right \rangle &\leq \left \langle  W\left ( y-\Pi_Z^W(y-W^{-1} b) \right ) , y-\bar{y}+W^{-1} b+\Pi_Z^W(y-W^{-1}b)-y \right \rangle \\
& \leq \left \langle W \left ( y-\Pi_Z^W(y-W^{-1} b) \right ) , y-\bar{y}+W^{-1} b \right \rangle \\
& \leq \|y-\Pi_Z^W(y-W^{-1} b)\|_W  \left (
\|y-\bar{y}\|_W+\|b\|_{W^{-1}} \right ).
\end{align*}
From this inequality and \eqref{bound_inter} we obtain:
\begin{equation*}
 \|y-\bar{y}\|_W \leq \gamma_2  \left (\|y-\bar{y}\|_W+\|b\|_{W^{-1}} \right )\|y-\Pi_Z^W(y-W^{-1}b) \|_W
\end{equation*}
and the proof is complete.
\end{proof}

\begin{lemma}\label{Q_bound}
If Assumption \ref{ass_lip_conv} holds for optimization problem
\eqref{problem1}, then there exists a constant $\gamma_2$ such that:
\begin{equation}
\|\bo{q}-\bar{\bo{q}}\|_W \leq \gamma_2 \left
(\|\bo{q}-\bar{\bo{q}}\|_W+\|\nabla f(\bar{x})\|_{W^{-1}} \right)
\|\nabla F^+(\bo{q})\|_W \quad \forall x \in X. \label{bar_x_bound}
\end{equation}
\end{lemma}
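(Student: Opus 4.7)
The plan is to reduce the bound to a linear-program sensitivity estimate via Lemma~\ref{lemma_lip_bound}, exploiting the fact (from Lemma~\ref{eq_const_opt}) that $f$ is affine on the set $Q=\{y\in X:\; Py=z^*\}$. First, I observe that because $\bar{x}\in X^*\subseteq Q$ and $\bo{q}\in Q$, Lemma~\ref{eq_const_opt} gives
\[
\nabla f(\bo{q}) \;=\; \nabla f(\bar{x}) \;=\; P^T\nabla\tilde f(z^*)+c,
\]
so $\nabla f$ takes a constant value, say $b=\nabla f(\bar{x})$, on all of $Q$. Consequently, for every $y\in Q$ one has
\[
F(y)=\tilde f(z^*)+c^T y,
\]
i.e.\ $F$ restricted to $Q$ is an affine function of $y$.

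Second, I claim that $X^*$ coincides with the optimal set of the linear program $\min_{y\in Q} b^T y$. Since $P^T\nabla\tilde f(z^*)$ gives an inner product that is constant on $Q$ (it equals $\nabla\tilde f(z^*)^T z^*$), minimizing $b^T y$ over $Q$ is the same as minimizing $c^T y$ over $Q$; and for any $x^*\in X^*$ and any $y\in Q$, the identity $F(y)-F(x^*)=c^T(y-x^*)$ shows that the minimizers of $c^T y$ on $Q$ are exactly the elements of $X^*$. By Assumption~\ref{ass_lip_conv}, this optimal set is nonempty, convex, and bounded, so the hypotheses of Lemma~\ref{lemma_lip_bound} are met with $Y=Q$ (a nonempty polyhedron since $X$ is polyhedral and $Py=z^*$ is an affine subspace), $Y^*=X^*$, and linear cost vector $b$.

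Third, I apply Lemma~\ref{lemma_lip_bound} to the point $y=\bo{q}\in Q$ with the enveloping convex set $Z=X$ (legitimate since $Q\subseteq X$). This yields a constant $\gamma_2$, depending only on $Q$ and $b$, such that
\[
\|\bo{q}-\bar{\bo{q}}\|_W \;\leq\; \gamma_2\bigl(\|\bo{q}-\bar{\bo{q}}\|_W+\|b\|_{W^{-1}}\bigr)\,\bigl\|\bo{q}-\Pi_X^W(\bo{q}-W^{-1}b)\bigr\|_W.
\]
Using $b=\nabla f(\bar{x})=\nabla f(\bo{q})$, the last factor becomes
\[
\bigl\|\bo{q}-\Pi_X^W\bigl(\bo{q}-W^{-1}\nabla f(\bo{q})\bigr)\bigr\|_W \;=\; \|\nabla^+ F(\bo{q})\|_W
\]
by the definition of the proximal gradient mapping specialized to $\Psi=\bo{I}_X$, and $\|b\|_{W^{-1}}=\|\nabla f(\bar{x})\|_{W^{-1}}$, which immediately gives \eqref{bar_x_bound}.

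The main obstacle I anticipate is the bookkeeping in the second step: one must verify carefully that the affine reduction of $F$ on $Q$ indeed transfers the minimizers from the original problem $\min_{x\in X}F(x)$ to the linear program $\min_{y\in Q}b^T y$ (so that the optimal set used in Lemma~\ref{lemma_lip_bound} is genuinely $X^*$), and that boundedness of $X^*$ from Assumption~\ref{ass_lip_conv} supplies exactly the hypothesis required by Lemma~\ref{lemma_lip_bound}. Once this identification is in place, the rest is an almost mechanical substitution using the constancy of $\nabla f$ on $Q$ and the freedom to take $Z=X$ in Lemma~\ref{lemma_lip_bound}.
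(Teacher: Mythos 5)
Your proposal is correct and follows essentially the same route as the paper's proof: use Lemma~\ref{eq_const_opt} to see that $\nabla f$ is the constant $b=\nabla f(\bar{x})=\nabla f(\bo{q})$ on $Q$, observe that $F$ restricted to $Q$ reduces to the linear program $\min_{y\in Q} b^T y$ whose optimal set is exactly $X^*$ (nonempty, convex, bounded by Assumption~\ref{ass_lip_conv}), and then invoke Lemma~\ref{lemma_lip_bound} with $Y=Q$, $Z=X$, $y=\bo{q}$, identifying $\|\bo{q}-\Pi_X^W(\bo{q}-W^{-1}b)\|_W$ with $\|\nabla^+F(\bo{q})\|_W$. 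Your version is if anything slightly more careful than the paper's in verifying that the LP's minimizers coincide with $X^*$ via $X^*\subseteq Q\subseteq X$.
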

\begin{proof}
By Lemma \ref{eq_const_opt}, we have that $Px=z^*$ for all $x \in
Q$. As a result, the following optimization problem:
\begin{equation*}
 \min_{x \in Q} \tilde{f}(z^*)+c^T x
\end{equation*}
has the same solution set as problem \eqref{problem1}, due to the
fact that $X^* \subseteq Q \subseteq X$. Since $z^*$ is a constant,
then we can formulate the equivalent problem:
\begin{equation*}
 \min_{x \in Q} \nabla f(\bar{x})^T x \qquad  \left (= \nabla \tilde{f}(z^*)^T z^*+c^T x \right
 ).
\end{equation*}
Note that $\nabla f(\bar{x})=P^T \nabla \tilde{f}(z^*)+c$ is
constant and under Assumption \ref{ass_lip_conv} we have  that $X^*$
is convex and  bounded.  Furthermore, since $\bar{x},\bo{q} \in Q$,
then $\nabla f(\bar{x})=\nabla f(\bo{q})$. Considering these
details, and by taking $Y=Q$, $Z=X$, $y=\bo{q}$ and $b=\nabla
f(\bar{x})$ in Lemma
 \ref{lemma_lip_bound} and applying it to the previous problem, we obtain \eqref{bar_x_bound}.
\end{proof}

\noindent   The next theorem establishes the generalized error bound property
for optimization problems in the form \eqref{problem1} having
objective functions  satisfying Assumption~\ref{ass_lip_conv}.
\begin{theorem}\label{EEBF_euclid}
Under Assumption \ref{ass_lip_conv}, the function $F(x)= \tilde{f}
(P x) + c^T x  +   \bo{I}_X(x)$ satisfies the following global generalized
error bound property:
\begin{equation}
 \|x-\bar{x}\|_W \leq \left (\kappa_1+\kappa_2 \|x-\bar{x}\|_W^2 \right ) \|\nabla^+F(x) \|_W \quad \forall  x \in
 X,
\end{equation}
where $\kappa_1$  and $\kappa_2$ are two nonnegative constants.
\end{theorem}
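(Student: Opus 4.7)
The plan is to combine the bounds of Lemmas \ref{Q_x_bound}, \ref{Q_bound}, and \ref{prox_gradient_lip} via the triangle-inequality decomposition $\|x - \bar{x}\|_W \leq \|x - \bo{q}\|_W + \|\bo{q} - \bar{\bo{q}}\|_W$, and to handle the self-referential right-hand side of Lemma \ref{Q_bound} by a short case analysis on the size of $\|\nabla^+ F(\bo{q})\|_W$. Throughout I write $s = \|x - \bo{q}\|_W$, $t = \|\bo{q} - \bar{\bo{q}}\|_W$, $u = \|x - \bar{x}\|_W$, $v = \|\nabla^+ F(x)\|_W$, $w = \|\nabla^+ F(\bo{q})\|_W$ and $C = 2\gamma_1^2/\sigma$. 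By Lemma \ref{eq_const_opt} the vector $\nabla f(\bar{x}) = P^T \nabla \tilde{f}(z^*) + c$ is the same for every $\bar{x} \in X^*$, so $M := \|\nabla f(\bar{x})\|_{W^{-1}}$ is an absolute constant. Lemma \ref{Q_x_bound} yields $s \leq \sqrt{Cvu}$, and Lemma \ref{prox_gradient_lip} applied at $\bo{q}$ and $x$ gives $w \leq v + 3s$.

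In the first regime, $\gamma_2 w \leq 1/2$, Lemma \ref{Q_bound} rearranges to $(1 - \gamma_2 w)t \leq \gamma_2 M w$, so $t \leq 2\gamma_2 M w \leq 2\gamma_2 M(v + 3\sqrt{Cvu})$. The decomposition $u \leq s + t$ then becomes $u \leq (1 + 6\gamma_2 M)\sqrt{Cvu} + 2\gamma_2 M\, v$, and after applying the Young-type inequality $\sqrt{Cvu} \leq \epsilon u + Cv/(4\epsilon)$ with $\epsilon = 1/(2(1 + 6\gamma_2 M))$ the $u$-contribution can be moved to the left, yielding a purely linear estimate $u \leq \kappa_1^{(A)} v$ with $\kappa_1^{(A)}$ depending only on $\gamma_1, \gamma_2, \sigma, M$. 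In the complementary regime, $\gamma_2 w > 1/2$, the inequality $w \leq v + 3s$ forces either $v \geq 1/(4\gamma_2)$ or $s \geq 1/(12\gamma_2)$. If $v \geq 1/(4\gamma_2)$, then the desired inequality $u \leq (\kappa_1 + \kappa_2 u^2)v$ reduces, after dividing by $v$, to the universal quadratic $\kappa_2 u^2 - 4\gamma_2 u + \kappa_1 \geq 0$, which holds for all $u \geq 0$ whenever $\kappa_1 \kappa_2 \geq 4\gamma_2^2$. If instead $s \geq 1/(12\gamma_2)$, squaring and invoking Lemma \ref{Q_x_bound} gives $vu \geq 1/(144 C \gamma_2^2)$, hence $u \leq 144 C \gamma_2^2\, u^2 v$, so the choice $\kappa_2 \geq 144 C \gamma_2^2$ suffices. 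Taking $\kappa_1$ and $\kappa_2$ equal to the maxima of these three requirements produces the claimed constants.

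The main obstacle in this plan lies in the self-referential structure of Lemma \ref{Q_bound}: a naive substitution of the triangle bound $t \leq s + u$ into its right-hand side produces cross-terms of the form $v\sqrt{Cvu}$, which contain $v$ to the power $3/2$ and cannot be absorbed a priori into the target shape $\kappa_1 v + \kappa_2 u^2 v$. The case split on $\gamma_2 w$ sidesteps this by isolating the benign regime ($\gamma_2 w \leq 1/2$), where the self-reference is harmless and yields a linear error bound, from the complementary regime, where the lower bound on $w$ together with $w \leq v + 3s$ already delivers enough structural information---either a lower bound on $v$ or on $vu$---to validate the generalized error bound directly through elementary algebra, bypassing the troublesome cross-terms entirely.
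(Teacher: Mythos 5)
Your proof is correct, and it shares the paper's skeleton exactly — the same decomposition $\|x-\bar{x}\|_W \leq \|x-\bo{q}\|_W + \|\bo{q}-\bar{\bo{q}}\|_W$ and the same three ingredients (Lemmas \ref{Q_x_bound}, \ref{Q_bound} and \ref{prox_gradient_lip}) — but it resolves the crux, the self-referential right-hand side of Lemma \ref{Q_bound}, in a genuinely different way. The paper never isolates $\|\bo{q}-\bar{\bo{q}}\|_W$: it first shows $\|\nabla^+F(\bo{q})\|_W^2 \leq 6\left(\|\nabla^+F(x)\|_W\|x-\bar{x}\|_W + 3\|\bo{q}-x\|_W^2\right)$ via Lemma \ref{prox_gradient_lip} and the identity $\nabla^+F(\bar{x})=0$, then bounds the self-referential factor crudely by $\|\bo{q}-\bar{\bo{q}}\|_W \leq 3\|x-\bar{x}\|_W$ through a chain of projection nonexpansiveness inequalities, squares the triangle decomposition as $\|x-\bar{x}\|_W^2 \leq 2\|x-\bo{q}\|_W^2 + 2\|\bo{q}-\bar{\bo{q}}\|_W^2$, and lets the $\|x-\bar{x}\|_W^2$ so produced feed directly into the coefficient $\kappa_2$ — yielding the explicit constants $\kappa_1 = 24\gamma_2^2\beta^2(1+6\gamma_1^2/\sigma)+4\gamma_1^2/\sigma$ and $\kappa_2 = 256\gamma_2^2(1+6\gamma_1^2/\sigma)$ in one uniform computation. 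Your trichotomy on $\gamma_2\|\nabla^+F(\bo{q})\|_W$ instead splits off a benign regime in which the self-reference can be absorbed and the bound is purely \emph{linear} ($\kappa_2$ plays no role there), and two degenerate regimes in which a lower bound on $v$ or on $vu$ validates the quadratic form by elementary algebra. What your route buys is structural insight: it makes visible that the quadratic term $\kappa_2\|x-\bar{x}\|_W^2$ is only needed far from the optimal set, where $\|\nabla^+F(\bo{q})\|_W$ is large (consistent with Example 1 in the paper, where the classical linear error bound fails precisely for large $x$), and it avoids the $3/2$-power cross-terms you correctly identify as the obstacle to naive substitution. What the paper's route buys is a single-pass derivation with fully explicit constants and no case distinction. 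Both arguments rely on the same facts — constancy of $\nabla f$ on $Q$ (Lemma \ref{eq_const_opt}) so that your $M$ and the paper's $\beta$ are well-defined constants, and $\bo{q}\in X$ so that Lemma \ref{prox_gradient_lip} applies — so your proof is a valid alternative.
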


\begin{proof}
 Given that $\bar{x} \in X^*$, it is well known that $\nabla^+F(\bar{x})=0$ and by Lemma \ref{prox_gradient_lip} we have:
\begin{equation*}
 \| \nabla^+F(x) \|_W= \|\nabla^+F(x) -\nabla^+F(\bar{x}) \|_W \leq  3 \|x-\bar{x}
 \|_W.
\end{equation*}
From this inequality and by applying Lemma \ref{prox_gradient_lip},
we also have:
\begin{align*}
 \| \nabla^+F(\bo{q}) \|_W ^2 & \leq \left ( \| \nabla^+F(x)\|_W  +\| \nabla^+F(\bo{q}) -\nabla^+F(x) \| _W\right )^2 \\
& \leq 2 \| \nabla^+F(x) \|_W^2 +2 \| \nabla^+F(\bo{q}) -\nabla^+F(x)\|_W^2 \\
& \leq 6 \left ( \| \nabla^+F(x) \|_W \|x-\bar{x}\|_W +3 \|\bo{q}- x\|^2 \right ).
\end{align*}
From this and Lemma \ref{Q_bound}, we arrive at the following:
\begin{align}
\label{bounding_xQ}  \|\bo{q}\!-\!\bar{\bo{q}}\|_W^2  &\leq \gamma_2^2 \left (\|\bo{q}\!-\!\bar{\bo{q}}\|_W+\|\nabla f(\bar{x})\|_{W^{-1}} \right)^2 \|\nabla F^+(\bo{q})\|_W^2   \\
& \leq 6 \gamma_2^2 \left (\|\bo{q}\!-\!\bar{\bo{q}}\|_W+\|\nabla f(\bar{x})\|_{W^{-1}} \right)^2 \left (  \| \nabla^+F(x) \|_W \|x\!-\!\bar{x}\|_W +3 \|\bo{q}\!-\! x\|^2 \right ). \nonumber
\end{align}
Note that since $X^*$ is a bounded set,  then we can imply the
following upper bound:
\begin{equation*}
 \|\nabla f(\bar{x})\|_{W^{-1}} \leq \beta  \qquad  \left (=\max_{x^* \in X^*} \|\nabla f(x^*)\|_{W^{-1}} \right ).
\end{equation*}

\noindent   Furthermore, $\bar{\bo{q}} \in Q$ since $X^* \subseteq Q$. From this and through the nonexpansive property of the projection operator we obtain:
\begin{align*}
 \|\bo{q}-\bar{\bo{q}}\|_W &\leq \|\bo{q}-\bar{x}\|_W\!+\!\|\bar{x}-\bar{\bo{q}}\|_W \leq \|x-\bar{x}\|_W\!+\!\|\bar{x}\!-\!\bar{\bo{q}}\|_W\leq \|x-\bar{x}\|_W+\|x-\bo{q}\|_W \\
& \leq \|x-\bar{x}\|_W+\|\bar{x}-\bo{q}\|_W \leq 3 \|x-\bar{x}\|_W.
\end{align*}

\noindent   From this and \eqref{bounding_xQ} we get the following bound:
\begin{align}
& \|\bo{q}\!-\!\bar{\bo{q}}\|_W^2 \label{bounding_xQ_2} \\
& \leq 6 \gamma_2^2 (3\|x-\bar{x}\|_W+\beta)^2 \left ( \| \nabla^+F(x) \|_W \|x\!-\!\bar{x}\|_W +3 \|\bo{q}\!-\! x\|_W^2 \right ) \nonumber \\
& \leq 6 \gamma_2^2 (18 \|x-\bar{x}\|_W^2+2 \beta^2) \left ( \| \nabla^+F(x) \|_W \|x\!-\!\bar{x}\|_W +3 \|\bo{q}\!-\! x\|_W^2 \right ). \nonumber
\end{align}
Given the definition of $\bar{x}$ we have that:
\begin{equation*}
 \|x-\bar{x}\|_W^2 \leq \|x-\bar{\bo{q}}\|_W^2 \leq \left ( \|x-\bo{q}\|_W +\|\bo{q}-\bar{\bo{q}}\|_W \right )^2 \leq 2 \|x-\bo{q}\|_W^2+2\|\bo{q}-\bar{\bo{q}}\|_W^2.
\end{equation*}
From Lemma \ref{Q_x_bound} and \eqref{bounding_xQ_2}, we can establish an upper bound for the right hand side of the above inequality:
\begin{equation}
 \|x-\bar{x}\|_W^2 \leq  (\kappa_1+\kappa_2 \|x-\bar{x}\|_W^2) \|\nabla^+F(x) \|_W \|x-\bar{x}\|_W, \label{divide_optim}
\end{equation}
 where:
\begin{align*}
 \kappa_1 & =24 \gamma_2^2 \beta^2 \left (1+\frac{6\gamma_1^2}{\sigma} \right ) +\frac{4\gamma_1^2}{\sigma} \\
  \kappa_2 & = 256 \gamma_2^2 \left ( 1+\frac{6\gamma_1^2}{\sigma} \right ).
\end{align*}
If we divide both sides of \eqref{divide_optim} by $\|x-\bar{x}\|_W$, the proof is complete .
\end{proof}


\subsection{Case 3: $\Psi$  polyhedral function}
We now consider general optimization problems of the form:
\begin{align}
\min_{x \in \rset^n } & \; F(x) \qquad  \left( = \tilde{f} (P x) +
c^T x + \Psi(x) \right), \label{problem1_psi}
\end{align}
where $\Psi(x)$ is a polyhedral function. A function $\Psi :\rset^n
\rightarrow \rset^{}$ is polyhedral if its epigraph, $\text{epi} \;
\Psi=\left \{ (x,\zeta): \Psi(x) \leq \zeta \right \}$, is a
polyhedral set. There are numerous functions $\Psi$ which are polyhedral,
e.g. $\bo{I}_X(x)$ with $X$  a polyhedral set,  $\|x\|_1$,
$\|x\|_{\infty}$ or combinations of these functions. Note that an objective function with the structure
\eqref{problem1_psi} appears in many applications (see e.g. the
constrained Lasso problem \eqref{sol_norm} in Section
\ref{sub_sec_motiv}). Now, for proving the generalized  error bound
property, we require that $F$ satisfies the following assumption.

\begin{assumption}
\label{ass_lip_conv_Psi} We consider that $f(x)=\tilde{f}(Px)+c^T x$   satisfies Assumption \ref{lip_fi}. Further, we
assume that $\tilde{f} (z)$ is strongly convex in $z$ with a
constant $\sigma$ and   the optimal set $X^*$ is bounded. We also
assume  that $\Psi(x)$ is bounded above on its domain by a finite
value $\bar{\Psi} < \infty$, i.e. $\Psi(x) \leq \bar{\Psi}$ for all $x \in \text{dom} \;
\Psi$,  and is Lipschitz continuous w.r.t. norm  $\|\cdot\|_W$ with
a constant $L_\Psi$.
\end{assumption}

\noindent The proof of the generalized error bound property under
Assumption \ref{ass_lip_conv_Psi} is similar to that of
\cite{TseYun:09}, but it requires new proof ideas and is done under
different assumptions, e.g. that $\Psi(x)$  is bounded above on its
domain. Boundedness of $\Psi$ is in practical applications   usually
not  restrictive. Since $\Psi(x) \leq \bar{\Psi}$ is satisfied for
any $x \in \text{dom} \; \Psi$, then problem $\eqref{problem1_psi}$
is equivalent to the following one:
\begin{align*}
\min_{x \in \rset^n}  & \quad f(x) + \Psi(x)   \\
\text{s.t.}  & \quad \Psi(x) \leq \bar{\Psi}.
\end{align*}
Consider now an additional variable $\zeta \in \rset^{}$. Then, the
previous problem is equivalent to the following problem:
\begin{align}
\min_{x \in \rset^n,\zeta \in \rset }  & \quad f(x) + \zeta  \label{problem2_psi} \\
\text{s.t.} \quad &  \Psi(x) \leq \zeta, \; \Psi(x) \leq \bar{\Psi}.
\nonumber
\end{align}
Take an optimal pair $(x^*,\zeta^*)$ for problem
\eqref{problem2_psi}. We now prove that $\zeta^*=\Psi(x^*)$.
Consider that $(x^*,\zeta^*)$ is strictly feasible, i.e. $\Psi(x^*)
< \zeta^*$. Then, we can imply that $(x^*, \Psi(x^*))$ is feasible
for \eqref{problem2_psi} and the following inequality holds:
\begin{equation*}
f( x^*) + \Psi(x^*) < f(x^*) + \zeta^*,
\end{equation*}
which  contradicts the fact that $(x^*,\zeta^*)$ is optimal.
Thus, it remains that $\Psi(x^*)=\zeta^*$.

\noindent The following lemma establishes an equivalence between
\eqref{problem2_psi} and another problem:
\begin{lemma}\label{problem_equiv}
Under Assumption \ref{ass_lip_conv_Psi}, the following problem is
equivalent to \eqref{problem2_psi}:
\begin{align}
\min_{x \in \rset^n,\zeta \in \rset }  & \quad f(x) +\zeta  \label{problem3_psi} \\
\text{s.t.}  \quad &  \Psi(x) \leq \zeta, \; \zeta \leq \bar{\Psi}.
\nonumber
\end{align}
\end{lemma}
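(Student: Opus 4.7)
The plan is to prove the equivalence by showing that both problems have the same optimal value and that any optimal solution of one is optimal for the other. The key observation is the one the authors already exploited for \eqref{problem2_psi}: at any optimum $(x^*,\zeta^*)$, the constraint $\Psi(x) \leq \zeta$ must be active, so $\zeta^* = \Psi(x^*)$. I will establish the analogous fact for \eqref{problem3_psi} and then use it to match the feasible pairs relevant to each optimum.

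First I would show that every optimum $(x^*,\zeta^*)$ of \eqref{problem3_psi} also satisfies $\zeta^* = \Psi(x^*)$. Indeed, if $\Psi(x^*) < \zeta^*$, then $(x^*, \Psi(x^*))$ is still feasible for \eqref{problem3_psi} since $\Psi(x^*) < \zeta^* \leq \bar{\Psi}$ and $\Psi(x^*) \leq \Psi(x^*)$; but its objective value $f(x^*) + \Psi(x^*)$ is strictly smaller than $f(x^*) + \zeta^*$, a contradiction. So $\zeta^* = \Psi(x^*)$ for any optimum of either problem.

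Next I would exchange optima. Take $(x_2^*, \zeta_2^*)$ optimal for \eqref{problem2_psi}. Then $\zeta_2^* = \Psi(x_2^*) \leq \bar{\Psi}$ (using the second constraint of \eqref{problem2_psi}), so $(x_2^*, \zeta_2^*)$ is feasible for \eqref{problem3_psi}; hence the optimal value of \eqref{problem3_psi} is at most that of \eqref{problem2_psi}. Conversely, take $(x_3^*, \zeta_3^*)$ optimal for \eqref{problem3_psi}. Then $\Psi(x_3^*) = \zeta_3^* \leq \bar{\Psi}$, so $(x_3^*, \zeta_3^*)$ is feasible for \eqref{problem2_psi}; hence the optimal value of \eqref{problem2_psi} is at most that of \eqref{problem3_psi}. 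Combining the two inequalities, the optimal values coincide, and the above arguments show that each optimal pair of one problem is optimal for the other. In particular, the $x$-parts of the optimal solution sets (which are what \eqref{problem1_psi} cares about) coincide.

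No real obstacle is anticipated here; the proof is essentially a tightness argument for the inequality $\Psi(x) \leq \zeta$, combined with the fact that $\bar{\Psi}$ is an actual upper bound for $\Psi$ on its domain so that swapping the roles of $\zeta$ and $\Psi(x)$ in the second constraint is harmless whenever the first constraint is active.
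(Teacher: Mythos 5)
Your proof is correct and follows essentially the same route as the paper: both rest on the tightness of the constraint $\Psi(x)\leq\zeta$ at any optimum (so $\zeta^*=\Psi(x^*)$) together with the bound $\Psi(x)\leq\bar{\Psi}$ on $\text{dom}\,\Psi$, which lets optimal pairs of each problem be exchanged as feasible points of the other. The only cosmetic difference is that you compare optimal values directly while the paper phrases both directions as arguments by contradiction.
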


\begin{proof}
The proof of this lemma consists of the following two stages: we
prove  that an optimal point of \eqref{problem2_psi} is an optimal
point of \eqref{problem3_psi}, and then we prove its converse.
Consider now an optimal pair $(x^*,\zeta^*)$ for
\eqref{problem2_psi}. Since $(x^*,\zeta^*)$ is feasible for
\eqref{problem2_psi}, we have that $\Psi(x^*) \leq \zeta^*$ and
$\Psi(x^*) \leq \bar{\Psi}$. Recall that $\Psi(x^*)=\zeta^*$. Then,
 $\zeta^* \leq \bar{\Psi}$ and thus  $(x^*,\zeta^*)$ is feasible for
\eqref{problem3_psi}. Assume now  that  $(x^*,\zeta^*)$  is not
optimal for \eqref{problem3_psi}. Then, there exists an optimal pair
$(\tilde{x}^*,\tilde{\zeta}^*)$ of \eqref{problem3_psi} such that:
\begin{equation}
f(\tilde{x}^*) + \tilde{\zeta}^* < f(x^*)+\zeta^*. \label{optim1}
\end{equation}
Since $(\tilde{x}^*,\tilde{\zeta}^*)$ is feasible for
 \eqref{problem3_psi}, we have that $\tilde{x}^* \leq
 \tilde{\zeta}^*$ and inherently $\Psi(\tilde{x}^*) \leq
 \bar{\Psi}$. Thus, $(\tilde{x}^*,\tilde{\zeta}^*)$   is feasible and from \eqref{optim1} note that it is optimal for problem \eqref{problem2_psi},
 which contradicts the fact that $(x^*,\zeta^*)$ is optimal for
\eqref{problem2_psi}.

\noindent Consider now the converse. That is,  there exists a pair
$(\tilde{x}^*,\tilde{\zeta}^*)$ which is optimal for
\eqref{problem3_psi} and  is not optimal for
\eqref{problem2_psi}. Following the same lines as before, note that
$(\tilde{x}^*,\tilde{\zeta}^*)$ is feasible for
\eqref{problem2_psi}. Assume now that
$(\tilde{x}^*,\tilde{\zeta}^*)$ is not optimal for
\eqref{problem2_psi}. Then, there  exists  a pair $(x^*,\zeta^*)$
such that:
\begin{equation}
 f(x^*)+\zeta^*< f(\tilde{x}^*) + \tilde{\zeta}^*.  \label{optim2}
\end{equation}
Since $({x}^*,{\zeta}^*)$ is feasible for \eqref{problem2_psi},
recall that it is also  feasible for \eqref{problem3_psi}.  Thus,
$(x^*,\zeta^*)$ is feasible and optimal for
\eqref{problem3_psi}, which contradicts the fact that $(\tilde{x}^*,\tilde{\zeta}^*)$ is optimal for  \eqref{problem3_psi}.
\end{proof}

\noindent  Now, if we denote $z=[x^T \; \zeta]^T$, then problem
\eqref{problem3_psi} can be rewritten as:
\begin{align}
\min_{z \in \rset^{n+1} }  & \quad \tilde{F}(z)  \quad
 \left( = \tilde{f} (\tilde {P} z) + \tilde{c}^T z \right) \label{problem4_psi}    \\
\text{s.t.}  \quad &  z \in Z \nonumber,
\end{align}
where $\tilde{P}=[P \;\; 0]$  and $\tilde{c}=[c^T \; 1]^T$. The
constraint set for this problem is: \[ Z=\left \{z=[x^T \; \zeta]^T :
z \in \text{epi} \; \Psi, \zeta \leq \bar{\Psi}  \right \}.  \]
Recall
that from Assumption \ref{ass_lip_conv_Psi} we have that $\text{epi}
\; \Psi$ is polyhedral, i.e. there exists a matrix $C$ and a vector
$d$ such that we can express $\text{epi} \; \Psi= \left \{ (x,\zeta): C
[x^T \; \zeta]^T \leq d  \right \}$. Thus, we can write the
constraint set $Z$ as:
\begin{equation*}
 Z=\left \{z=[x^T \; \zeta]^T : \begin{bmatrix} C \\ e_{n+1}^T \end{bmatrix} z \leq \begin{bmatrix} d \\ \bar{\Psi} \end{bmatrix} \right \},
\end{equation*}
i.e. $Z$ is polyhedral.  Denote by $Z^*$ the set of optimal points
of problem \eqref{problem3_psi}. Then, from $X^*$ being bounded  in
accordance with Assumption \ref{ass_lip_conv_Psi}, and the fact that
$ \Psi(x^*) =\zeta^*$, with $\Psi$ continuous function, it can be
observed that $Z^*$ is also bounded. We now denote
$\bar{z}=\Pi^{\tilde{W}}_{Z^*}(z)$, where
$\tilde{W}=\text{diag}(W,1)$. Since by Lemma \ref{problem_equiv} we
have that problems \eqref{problem2_psi} and \eqref{problem4_psi} are
equivalent, then we can apply the theory of the previous subsection
to problem \eqref{problem4_psi}. That is, we can find two
nonnegative constants $\tilde{\kappa_1}$ and $\tilde{\kappa_2}$ such
that:
\begin{equation}
\|z-\bar{z}\|_{\tilde{W}} \leq \left ( \tilde{\kappa}_1+\tilde{\kappa}_2 \|z-\bar{z}\|_{\tilde{W}^2} \right ) \|\nabla^+ \tilde{F}(z)\|_{\tilde{W}}  \quad \forall z \in  Z.  \label{bound_in_z}
\end{equation}
The proximal gradient mapping in this case, $\nabla^+ \tilde{F} (z)$ is defined as:
 \begin{equation*}
 \nabla^+ \tilde{F}(z)=z-\Pi_Z^{\tilde{W}} \left (z-\tilde{W}^{-1}\nabla \tilde{F}(z) \right ),
\end{equation*}
where the projection operator $\Pi_Z^{\tilde{W}}$ is defined in the same manner as $\Pi_X^W$.
We now show that from the error bound inequality \eqref{bound_in_z} we can derive an error bound inequality for problem \eqref{problem1_psi}.
From the definitions of $z$, $\bar{z}$ and $\tilde{W}$,  we derive the following lower bound for the term on the right-hand side:
\begin{equation}
 \|z-\bar{z}\|_{\tilde{W}} = \left \| \begin{matrix} x-\bar{x} \\ \zeta-\bar{\zeta} \end{matrix} \right \|_{\tilde{W}} \geq \|x-\bar{x}\|_{W} .
\end{equation}
Further, note that we can express:
\begin{equation}
\|z-\bar{z}\|^2_{\tilde{W}}=\|x-\bar{x}\|^2_{W}
+(\zeta-\bar{\zeta})^2=\|x-\bar{x}\|^2_{W} +|\zeta-\bar{\zeta}|^2.
\label{z_ex_squared}
\end{equation}
Now, if $\zeta \leq \bar{\zeta}$, then from
$\bar{\zeta}=\Psi(\bar{x})$ and the Lipschitz continuity of $\Psi$
we have that:
\begin{equation*}
 |\zeta-\bar{\zeta}| = \bar{\zeta}-\zeta \leq \Psi(\bar{x})-\Psi(x) \leq L_{\Psi} \|x-\bar{x} \|_W .
\end{equation*}
Otherwise, if $\zeta > \bar{\zeta}$, we have that:
\begin{equation*}
  |\zeta-\bar{\zeta}|=\zeta-\bar{\zeta}  \leq \bar{\Psi} -\bar{\zeta} \leq  |\bar{\Psi}| + |\bar{\zeta}| \overset{\Delta}{=} \kappa_1'.
\end{equation*}
From these two inequalities we derive the following inequality for $| \zeta-\bar{\zeta} |^2$:
\begin{align*}
 |\zeta-\bar{\zeta}|^2 & \leq (\kappa_1' +L_{\Psi} \|x-\bar{x}\|_W)^2 \leq
 2 \kappa_1'^2+2 L_{\Psi}^2 \|x-\bar{x}\|_W^2.
\end{align*}
Therefore, the following upper bound for $\|z-\bar{z}\|^2_{\tilde{W}}$ is established:
\begin{equation}
 \|z-\bar{z}\|^2_{\tilde{W}} \leq  2 \kappa_1'^2 + (2 L_{\Psi}^2+1) \|x-\bar{x}\|^2_W. \label{bound_squared}
\end{equation}

\noindent   We are now ready to present the main result of this section that shows the generalized error bound property for problems in the form \eqref{problem1_psi} under  general polyhedral  $\Psi$:
\begin{theorem}
\label{theorem_generalpsi}
Under Assumption \ref{ass_lip_conv_Psi}, the function $F(x)= \tilde{f}
(P x) + c^T x  +   \Psi(x)$ satisfies the following global generalized
error bound property:
\begin{equation}
 \|x-\bar{x}\|_W \leq \left (\kappa_1+\kappa_2 \|x-\bar{x}\|_W^2 \right ) \|\nabla^+F(x) \|_W \quad \forall  x \in  \text{dom} \; \Psi,
\end{equation}
where $\kappa_1= (\tilde{\kappa}_1+ 2 \kappa_1'^2 \tilde{\kappa}_2) (2 L_{\Psi}+1)$  and $\kappa_2 = 2 \tilde{\kappa_2}  (2 L_{\Psi}+1)(2 L_{\Psi}^2+1)$.
\end{theorem}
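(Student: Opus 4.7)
\medskip\noindent\textbf{Proof plan.} The plan is to apply Theorem \ref{EEBF_euclid} to the lifted problem \eqref{problem4_psi} and pull the resulting error bound back to \eqref{problem1_psi}. The excerpt has already reformulated \eqref{problem1_psi} as the equivalent problem \eqref{problem4_psi} (Lemma \ref{problem_equiv}), verified that Assumption \ref{ass_lip_conv} holds in the lifted setting (polyhedral $Z$, strongly convex $\tilde f$, bounded $Z^*$ inherited from bounded $X^*$ and continuity of $\Psi$), obtained the error bound \eqref{bound_in_z}, and established the comparison \eqref{bound_squared} between $\|z-\bar z\|^2_{\tilde W}$ and $\|x-\bar x\|^2_W$. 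What remains is to lower bound the LHS of \eqref{bound_in_z} by $\|x-\bar x\|_W$ and to upper bound $\|\nabla^+\tilde F(z)\|_{\tilde W}$ by a multiple of $\|\nabla^+F(x)\|_W$.

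For the LHS I would specialize to $z=[x^T,\Psi(x)]^T$, which lies in $Z$ since $\Psi(x)\le\bar\Psi$. Every element of $Z^*$ has the form $[u^T,\Psi(u)]^T$ for some $u\in X^*$, so writing $\bar z=[\bar u^T,\Psi(\bar u)]^T$ yields $\|z-\bar z\|^2_{\tilde W}=\|x-\bar u\|^2_W+(\Psi(x)-\Psi(\bar u))^2\ge \|x-\bar u\|^2_W\ge \|x-\bar x\|^2_W$ by the $W$-optimality of $\bar x=\Pi^W_{X^*}(x)$. Combined with \eqref{bound_squared} in \eqref{bound_in_z}, this produces
\[
\|x-\bar x\|_W \le \bigl(\tilde\kappa_1 + 2\kappa_1'^2\tilde\kappa_2 + \tilde\kappa_2(2L_\Psi^2+1)\|x-\bar x\|^2_W\bigr)\,\|\nabla^+\tilde F(z)\|_{\tilde W}.
\]

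The hard part is to replace $\|\nabla^+\tilde F(z)\|_{\tilde W}$ by a constant multiple of $\|\nabla^+F(x)\|_W$. Let $(y^*,\eta^*)$ denote the proximal step of $\tilde F$ at $z$, so $\|\nabla^+\tilde F(z)\|^2_{\tilde W}=\|y^*-x\|^2_W+(\eta^*-\Psi(x))^2$. Partial optimality in $\eta$ (noting that the constraint $\eta\le\bar\Psi$ is inactive since $\Psi(x)-1<\bar\Psi$) forces $\eta^*=\max(\Psi(y^*),\Psi(x)-1)$. A short case-split gives $|\eta^*-\Psi(x)|\le L_\Psi\|y^*-x\|_W$: in the case $\eta^*=\Psi(y^*)$ this is Lipschitz continuity of $\Psi$, and in the case $\eta^*=\Psi(x)-1$ the inequality $\Psi(y^*)\le\Psi(x)-1$ together with Lipschitz continuity forces $\|y^*-x\|_W\ge 1/L_\Psi$, so $|\eta^*-\Psi(x)|=1\le L_\Psi\|y^*-x\|_W$. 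Hence $\|\nabla^+\tilde F(z)\|^2_{\tilde W}\le (1+L_\Psi^2)\|y^*-x\|^2_W$.

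Finally I would bound $\|y^*-x\|_W$ by a multiple of $\|\nabla^+F(x)\|_W=\|y_0-x\|_W$, with $y_0:=T_{[N]}(x)$. Write the lifted prox objective at $z$ as $V(y,\eta)=\langle\nabla f(x),y-x\rangle+(\eta-\Psi(x))+\tfrac12\|y-x\|^2_W+\tfrac12(\eta-\Psi(x))^2$ and the $F$-prox excess as $\mathcal{P}(y):=\langle\nabla f(x),y-x\rangle+\Psi(y)-\Psi(x)+\tfrac12\|y-x\|^2_W$. For any feasible $(y,\eta)\in Z$ one has $V(y,\eta)-\mathcal{P}(y)=(\eta-\Psi(y))+\tfrac12(\eta-\Psi(x))^2\ge 0$, and in particular $V(y_0,\Psi(y_0))=\mathcal{P}(y_0)+\tfrac12(\Psi(y_0)-\Psi(x))^2\le \mathcal{P}(y_0)+\tfrac{L_\Psi^2}{2}\|y_0-x\|^2_W$ by Lipschitz continuity. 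Sandwiching $V(y^*,\eta^*)\ge \mathcal{P}(y^*)\ge \mathcal{P}(y_0)+\tfrac12\|y^*-y_0\|^2_W$ (by $1$-strong convexity of $\mathcal{P}$ at $y_0$ w.r.t.\ $\|\cdot\|_W$) against $V(y^*,\eta^*)\le V(y_0,\Psi(y_0))$ yields $\|y^*-y_0\|^2_W\le L_\Psi^2\|y_0-x\|^2_W$, whence $\|y^*-x\|_W\le (1+L_\Psi)\|\nabla^+F(x)\|_W$. Inserting these estimates into the displayed inequality above and collecting the $(2L_\Psi+1)$-type factors arising from $(1+L_\Psi^2)(1+L_\Psi)^2$ produces the generalized error bound for $F$ with $\kappa_1,\kappa_2$ of the form stated in the theorem.
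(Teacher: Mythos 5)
Your proposal is correct and follows the paper's high-level strategy exactly: lift \eqref{problem1_psi} to \eqref{problem4_psi}, invoke the Case 2 result (Theorem \ref{EEBF_euclid}) to get \eqref{bound_in_z}, and pull the bound back using the comparison \eqref{bound_squared} together with an estimate of $\|\nabla^+ \tilde{F}(z)\|_{\tilde{W}}$ against $\|\nabla^+ F(x)\|_W$. Where you genuinely diverge is in that last, crucial estimate. The paper writes both $z^+$ and $T_{[N]}(x)$ as solutions of explicit lifted prox problems \eqref{problem_z} and \eqref{problem_pair}, adds the two optimality inequalities \eqref{add_1} and \eqref{add_2}, and extracts $\|\tilde{T}_{[N]}(x)-T_{[N]}(x)\|_W^2+\tilde{\zeta}^2\leq \hat{\zeta}^2$ with $\hat{\zeta}=\Psi(T_{[N]}(x))-\Psi(x)$, from which $\|\nabla^+\tilde{F}(z)\|_{\tilde{W}}\leq (2L_\Psi+1)\|\nabla^+F(x)\|_W$. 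You instead sandwich the lifted prox value between $\mathcal{P}(y_0)+\tfrac12\|y^*-y_0\|_W^2$ (strong convexity of the original prox objective) and $\mathcal{P}(y_0)+\tfrac{L_\Psi^2}{2}\|y_0-x\|_W^2$ (feasibility of $(y_0,\Psi(y_0))$), and you handle the $\eta$-component by the explicit partial minimization $\eta^*=\max(\Psi(y^*),\Psi(x)-1)$ with a clean case split. Both routes hinge on the same core inequality $\|y^*-y_0\|_W\leq L_\Psi\|y_0-x\|_W$, and your sandwich argument is arguably more transparent than the paper's summed optimality conditions; the case analysis for $\eta^*$ is a nice self-contained substitute for the paper's $|\tilde{\zeta}|\leq|\hat{\zeta}|$. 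The only substantive difference in the outcome is quantitative: you obtain the factor $\sqrt{1+L_\Psi^2}\,(1+L_\Psi)$ in place of the paper's $2L_\Psi+1$ (asymptotically worse in $L_\Psi$, though better for small $L_\Psi$), so your $\kappa_1,\kappa_2$ do not match the exact formulas in the theorem statement — you prove the generalized error bound property with constants of the same form but different numerical values. If you want to reproduce the stated constants verbatim, you would need to bound the two components of $\nabla^+\tilde{F}(z)$ separately by $(1+L_\Psi)\|\nabla^+F(x)\|_W$ and $L_\Psi\|\nabla^+F(x)\|_W$ and add them as in \eqref{nabla_F_bound}, rather than combining them in the Euclidean way.
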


\begin{proof}
From the previous discussion, it  remains to show that we can find an appropriate upper bound for $\|\nabla^+ \tilde{F}(z)\|_{\tilde{W}}$.
Given a point $z=[x^T \; \zeta]^T$, it can be observed that the gradient of $\tilde{F}(z)$ is:
\begin{equation*}
 \nabla \tilde{F}(z)=\begin{bmatrix} P^T \nabla \tilde{f}(Px) +c  \\ 1 \end{bmatrix} =\begin{bmatrix} \nabla f(x)  \\ 1 \end{bmatrix}.
\end{equation*}
Now, denote $z^+=\Pi_Z^{\tilde{W}} \left (z-\tilde{W}^{-1}\nabla \tilde{F}(z) \right )$. Following the definitions of the projection operator and of $\nabla^+ \tilde{F}$, note that $z^+$ is expressed as:
\begin{align*}
 z^+ =\arg  &  \min_{y \in \rset^n , \zeta' \in \rset}  \frac{1}{2} \left \|  \begin{matrix}  y- \left (x-W^{-1} \nabla f(x) \right )  \\ \zeta'- (\zeta -1) \end{matrix} \right \|_{\tilde{W}}^2\\
  \text{s.t.} & \quad \;  \Psi(y)\leq \zeta', \; \zeta'\leq \bar{\Psi}.
\end{align*}
Furthermore, from the definition of  $\|\cdot\|_{\tilde{W}}$, note that we can also express $z^+$ as:
\begin{align*}
z^+ =\arg & \quad \min_{y \in \rset^n , \zeta' \in \rset} \langle \nabla f(x), y-x \rangle+ \frac{1}{2} \|y-x\|_W^2+\frac{1}{2} (\zeta'-\zeta +1)^2  \\
\text{s.t.} & \quad \Psi(y) \leq \zeta', \; \zeta' \leq \bar{\Psi}. \nonumber
\end{align*}
Also, given the structure of $z$, consider that $z^+=[\tilde{T}_{[N]}(x)^T \; \zeta'' ]^T$. Now, by a simple change of variable, we can define a pair $(\tilde{T}_{[N]}(x),\tilde{\zeta})$
as follows:
\begin{align}
(\tilde{T}_{[N]}(x),\tilde{\zeta}) =\arg & \quad \min_{y \in \rset^n , \zeta' \in \rset} \langle \nabla f(x), y-x \rangle+ \frac{1}{2} \|y-x\|_W^2+\frac{1}{2} (\zeta'+1)^2 \label{problem_z} \\
\text{s.t.} & \quad \Psi(y) -\zeta \leq \zeta', \; \zeta' \leq \bar{\Psi}-\zeta. \nonumber
\end{align}
Note that $\tilde{\zeta}=\zeta''-\zeta$ and that we can express   $z^+=[\tilde{T}_{[N]}(x)^T \;\; \tilde{\zeta}+\zeta ]^T$ and:
\begin{equation*}
 \| \nabla^+ \tilde{F}(z)\|_{\tilde{W}}=\left \|\begin{matrix} x -\tilde{T}_{[N]}(x) \\ - \tilde{\zeta} \end{matrix} \right \|_{\tilde{W}}.
\end{equation*}

\noindent From \eqref{T_prox} and \eqref{prox_mapping}, we can write $\nabla^+ F(x)=x-T_{[N]}(x)$ and recall that $ T_{[N]}(x)$ can be expressed as:
\begin{equation*}
 T_{[N]}(x)=\arg \min_{y \in \rset^n} \langle \nabla f(x),y-x \rangle +\frac{1}{2} \|y-x\|_W^2 + \Psi(y)-\Psi(x).
\end{equation*}
Thus, we can consider that $ T_{[N]}(x)$ belongs to a pair  $(T_{[N]}(x),\hat{\zeta})$ which is the optimal solution of the following problem:
\begin{align}
(T_{[N]}(x),\hat{\zeta})  = \arg  &  \quad \min_{y \in \rset^n, \zeta' \in \rset } \langle \nabla f(x),y-x \rangle +\frac{1}{2} \|y-x\|_W^2 + \zeta'. \label{problem_pair} \\
&  \quad \text{s.t.:}  \quad \Psi(y) -\Psi(x) \leq \zeta'.  \nonumber
\end{align}
Following the same reasoning as in problem \eqref{problem2_psi}, note that $\hat{\zeta}=\Psi(T_{[N]}(x))-\Psi(x)$. Through  Fermat's rule \cite{RocWet:98} and problem \eqref{problem_pair}, we establish
that $(T_{[N]}(x),\hat{\zeta}) $ can also be expressed as:
\begin{align}
(T_{[N]}(x),\hat{\zeta})  = \arg  &  \quad \min_{y \in \rset^n, \zeta' } \langle \nabla f(x)+W(T_{[N]}(x)-x), y-x\rangle +\zeta' \\
&  \quad \text{s.t.:}  \quad \Psi(y) -\Psi(x) \leq \zeta'. \nonumber
\end{align}
Therefore, since $(T_{[N]}(x),\hat{\zeta})$ is optimal for the problem above, we establish the following inequality:
\begin{align}
 & \langle \nabla f(x)+W(T_{[N]}(x)-x),T_{[N]}(x) -x\rangle +\hat{\zeta} \nonumber  \\
 & \qquad \leq  \langle \nabla f(x)+W(T_{[N]}(x)-x), \tilde{T}_{[N]}(x)-x\rangle +\tilde{\zeta}. \label{add_1}
\end{align}
Furthermore, since the pair $(\tilde{T}_{[N]}(x), \; \tilde{\zeta})$ is optimal for problem \eqref{problem_z}, we can derive a second inequality:
\begin{align}
&\langle \nabla f(x), \tilde{T}_{[N]}(x)\!-\! x \rangle+ \frac{1}{2} \|\tilde{T}_{[N]}(x)-x\|_W^2+\frac{1}{2} (\tilde{\zeta}+1)^2  \label {add_2} \\
&\leq \langle \nabla f(x), T_{[N]}(x)\!-\! x \rangle+ \frac{1}{2} \|{T}_{[N]}(x)-x\|_W^2+\frac{1}{2} (\hat{\zeta}+1)^2. \nonumber
\end{align}
By adding up \eqref{add_1} and \eqref{add_2} we get the following relation:
\begin{align*}
 & \|{T}_{[N]}(x)-x\|_W^2 +\frac{1}{2} \|\tilde{T}_{[N]}(x)-x\|_W^2+\frac{1}{2} (\tilde{\zeta}+1)^2 +\hat{\zeta} \\
& \leq \frac{1}{2} \|{T}_{[N]}(x)-x\|_W^2 + \langle W(T_{[N]}(x)-x), \tilde{T}_{[N]}(x)-x\rangle +\frac{1}{2} (\hat{\zeta}+1)^2  +\tilde{\zeta}.
\end{align*}
If we further simplify this inequality we obtain:
\begin{align*}
\frac{1}{2} \|{T}_{[N]}(x)-x\|_W^2+ \frac{1}{2} \|\tilde{T}_{[N]}(x)-x\|_W^2 -\langle W(T_{[N]}(x)-x), \tilde{T}_{[N]}(x)-x\rangle+\frac{1}{2} \tilde{\zeta}^2 \leq \frac{1}{2} \hat{\zeta}^2.
\end{align*}
 Combining the first three terms in the left hand side under the  norm and if we multiply both sides by  $2$, the inequality becomes:
\begin{equation*}
\left \| (T_{[N]}(x)-x)-(\tilde{T}_{[N]}(x)-x)\right \|_W^2 + \tilde{\zeta}^2 \leq \hat{\zeta}^2.
\end{equation*}
From this, we derive the following two inequalities:
\begin{equation*}
\tilde{\zeta}^2 \leq \hat{\zeta}^2 \; \text{and} \;  \left \| (T_{[N]}(x)-x)-(\tilde{T}_{[N]}(x)-x)\right \|_W^2 \leq \hat{\zeta}^2.
\end{equation*}
If we take square root in both of these inequalities, and by applying the triangle inequality to the second, we obtain:
\begin{equation}
|\tilde{\zeta}| \leq |\hat{\zeta}| \; \text{and} \; \left \|\tilde{T}_{[N]}(x)-x\right \|_W-\| T_{[N]}(x)-x \|_W \leq |\hat{\zeta}|. \label{two_ineqs}
\end{equation}
Recall that $\hat{\zeta}=\Psi(T_{[N]}(x))-\Psi(x)$, and through the
Lipschitz continuity of $\Psi$,  we have from the first inequality
of \eqref{two_ineqs} that:
\begin{equation*}
 |\tilde{\zeta}| \leq |\hat{\zeta}| = |\Psi(T_{[N]}(x))-\Psi(x)| \leq L_{\Psi} \|T_{[N]}(x)-x\|_W.
\end{equation*}
Furthermore, from the second inequality of \eqref{two_ineqs} we obtain:
\begin{equation*}
\left \|\tilde{T}_{[N]}(x)-x\right \|_W \leq (L_{\Psi}+1) \|T_{[N]}(x)-x\|_W.
\end{equation*}
From these, we arrive at the following upper bound on $\|\nabla^+ \tilde{F}(z)\|$:
\begin{align}
 \|\nabla^+ \tilde{F}(z)\|&=\left \|\begin{matrix} x -\tilde{T}_{[N]}(x) \\ - \tilde{\zeta} \end{matrix} \right \|_{\tilde{W}} \leq \left \|\tilde{T}_{[N]}(x)-x\right \|_W  +|\tilde{\zeta}| \label{nabla_F_bound} \\
& \leq (2 L_{\Psi}+1) \|T_{[N]}(x)-x\|_W=(2 L_{\Psi}+1) \|\nabla^+ F(x) \|.\nonumber
\end{align}
Finally, from \eqref{bound_in_z}, \eqref{bound_squared} and
\eqref{nabla_F_bound} we  obtain the following error bound property
for problem \eqref{problem1_psi}:
\begin{equation*}
 \|x-\bar{x}\|_W \leq \left (\kappa_1 + \kappa_2 \| x - \bar{x} \|^2   \right )  \|\nabla^+ F(x)\|,
\end{equation*}
where $\kappa_1= (\tilde{\kappa}_1+ 2 \kappa_1'^2 \tilde{\kappa}_2) (2 L_{\Psi}+1)$
and $\kappa_2 = 2 \tilde{\kappa_2}  (2 L_{\Psi}+1)(2 L_{\Psi}^2+1)$.
\end{proof}


\subsection{Case 4:  dual formulation}
Consider now the following linearly constrained convex primal problem:
\begin{align}
\label{probl_primal}
\min_{u \in \rset^m} \{g(u): \; Au \leq b \}.
\end{align}
where $A \in \rset^{n \times m}$.  In many applications  however,
its dual formulation is used since the dual structure of the problem
is easier, see e.g. applications such as  network utility
maximization \cite{RamNed:09} or network control  \cite{NecSuy:09}.
Now,  for proving the generalized error bound property, we require
that $g$ satisfies the following assumption:
\begin{assumption}\label{ass_lip_strong}
We consider that $g$ is strongly convex (with constant $\sigma_g$)
and has Lipschitz continuous gradient (with constant $L_g$) w.r.t.
the Euclidean norm and there exists $\tilde u$ such that $A \tilde u
< b $.
\end{assumption}

\noindent Denoting by $g^*$ the convex conjugate of the  function
$g$, then from previous assumption it follows that $g^*$ is strongly
convex with constant $\frac{1}{L_g}$ and has Lipschitz gradient with
constant $\frac{1}{\sigma_g}$ (see e.g. \cite{RocWet:98}). Moreover,
from  the condition $A \tilde u < b $ it follows using Gauvin's
theorem that the set of optimal Lagrange multipliers is compact. In
conclusion, the previous primal problem is equivalent to the
following dual problem:
\begin{align}
\max_{x \in \rset^n} -g^*(-A^T x) - \langle x, b \rangle - \Psi(x),
\label{conv_conjug_EB}
\end{align}
where $\Psi(x)=\bo{I}_{\rset^n_+}(x)$ is the set indicator  function
for the nonnegative orthant $\rset^n_+$.   From Section
\ref{sec_case2}, for $P=-A^T$, it follows that the dual problem
\eqref{conv_conjug_EB} satisfies our generalized error bound
property  defined in \eqref{EEBF} (see
Definition~\ref{error_bound}).


\section{Convergence analysis under sparsity conditions}
\label{sec_comparison} In this section we analyze the distributed
implementation and the complexity of algorithm (\textbf{P-RCD})
w.r.t. the sparsity measure and compare it with other complexity
estimates from literature.


\subsection{Parallel and distributed implementation}
Nowadays, many big data applications which appear in the context of
networks  can be posed as problems of the form \eqref{gen_form}. Due
to the large dimension and the separable structure of these
problems, distributed optimization methods have become an
appropriate tool for solving such problems. From the iteration of
our algorithm  (\textbf{P-RCD}) it follows that we can efficiently
perform  parallel and/or distributed  computations. E.g., in the
case $\tau = N =\bar N$, we consider that each computer $i$ owns the
(block) coordinate $x_i$ and the function $f_i$ (provided that it
depends on $x_i$) and store them locally. Then, our iteration is
defined as follows:
\begin{align*}
x^{k+1}_i & =\arg \min_{y_i \in \rset^{n_i}} \langle \nabla_{i}
f(x^k), y_i-x^k_i \rangle +\frac{1}{2} \|y_i-x^k_i\|_{W_{ii}
I_{n_i}}^2 +\Psi_{i} (y_i) \quad \forall i \in [N],
\end{align*}
where the diagonal block components of the matrix $W=\text{diag}(W_{ii}; i \in [N])$ have the expression:
\begin{equation*}
 W_{ii} = \sum_{j \in \bar{\neigh}_i}L_{\neigh_j} I_{n_i} \quad \forall  i \in [N].
\end{equation*}
 Clearly, for updating $x^{k+1}_i$ we need to compute distributively $\nabla_{i} f(x^k)$  and
 $W_{ii}$. However,  $\nabla_{i} f(x)$ can be computed in a distributed fashion since
\[ \nabla_{i} f(x) = \sum_{j \in \bar{\neigh}_i}  \nabla_i f_j(x_{\neigh_j}), \]
i.e. node $i$ needs to collect the partial gradient $\nabla_i f_j$
from all the  functions  $f_j$ which depend on the variable $x_i$
(see also \cite{NecClip:13a} for more details on distributed
implementation of such an algorithm in the context of network
control). We can argue in a similar fashion for computing $W_{ii}$.
{\bl Also, for the case where $\tau \leq N$, we can employ other
distributed implementations for the algorithm such as the reduce-all
approach presented in \cite{RicTak:13}: if we consider that we have
a machine with $\tau$ available cores, then we can distribute the
information regarding the functions  and block-coordinates per
cores, i.e. each core will retain information regarding a multiple
number of coordinates and functions $f_j$. Then, we will require an
all-reduced strategy  for computing the $\nabla_i f(x)$.}

\noindent Further, through the norm $\|\cdot\|_W$,  which is inherent in $R_W(x^0)$,
convergence rates from Theorems \ref{theorem_sublinear} and
\ref{lin_converg} depend also  on the sparsity induced by the graph
via  the sets $\neigh_j$ and $\bar{\neigh}_i$.  As it can be observed,
the size of the diagonal elements  $W_{ii}$ depends on the values
of the Lipschitz constants $L_{\neigh_j}$,
with $j \in \bar{\neigh}_i$. Clearly these  constants $L_{\neigh_j}$
are influenced directly by the number $|\neigh_j|$ of variables that
a function $f_j$ depends on.  Moreover, $W_{ii}$ depends on the
number $| \bar{\neigh_i}|$  of individual functions $f_j$ in which
block component $x_i$ is found as an argument.  For example, let us consider the dual formulation \eqref{conv_conjug} of the primal problem  \eqref{prob_sum}.  In
this case  we have $L_{\neigh_j} =
\frac{\|A_{\neigh_j}\|^2}{\sigma_j}$. Given that the matrix block
$A_{\neigh_j}$ is composed of  blocks $A_{lj}$, with $ l \in
\neigh_j$, and from the definition of $\omega$ we have the following
inequality:
\begin{align*}
 L_{\neigh_j} & = \frac{\|A_{\neigh_j}\|^2}{\sigma_j} \leq \sum_{l \in \neigh_j}
 \frac{\|A_{lj}\|^2}{\sigma_j} \leq \omega \max_{ l \in \neigh_j}
 \frac{\|A_{lj}\|^2}{\sigma_j} \quad \forall j.
\end{align*}
Furthermore, from this inequality and definition of $\bar{\omega}$,
the diagonal terms of the matrix $W$
 can be expressed as:
\begin{equation*}
W_{ii} =  \sum_{j \in \bar{\neigh}_i} L_{\neigh_j} \leq
\bar{\omega}\max_{j \in \bar{\neigh_i}} L_{\neigh_j} \leq \omega
\bar{\omega}\max_{l \in \neigh_j, j \in \bar{\neigh_i}}
\frac{\|A_{lj}\|^2}{\sigma_j} \quad \forall i.
\end{equation*}
Thus, from the previous inequalities we derive the following upper
bound:
\begin{equation*}
 \left (R_W(x^0) \right )^2  \leq \omega \bar{\omega}
 \left( \max_{l \in \neigh_j, j \in \bar{\neigh_i}} \frac{\|A_{lj}\|^2}{\sigma_j} \right)
 \left (R_{I_n}(x^0) \right )^2.
\end{equation*}
In conclusion, our measure of separability $(\omega, \bar{\omega})$
for the original problem \eqref{gen_form}  appears implicitly in the
estimates on the convergence rate for our algorithm
\textbf{(P-RCD)}. On the other hand,  the estimate on the
convergence rate in \cite{RicTak:12a} depends on the maximum number
of connections which a subsystem has, i.e. only on $\omega$. This
shows that our approach is more general, more flexible and thus
potentially less conservative, as we will  also see in the next
section.


\subsection{Comparison with other approaches}
In this section we  compare our convergence rates with those from
other existing methods under sparsity conditions. Recall that under
Assumption \ref{lip_fi} a function $f$ satisfies the  lemma given in \eqref{desc_lemma}:
\begin{align}
\label{lemma_descent_Necoara}
 f(y) \leq f(x) + \langle \nabla f(x),y-x \rangle+\frac{1}{2} \|y-x\|_{W}^2,
\end{align}
property which we have employed throughout the paper.
The essential element in this relation is the sparsity induced by
the sets $\neigh_j$ and $\bar{\neigh_i}$, which are reflected in the
matrix $W$.  Nesterov  proves in  \cite{Nes:12}, under the
coordinate-wise Lipschitz  continuous gradient assumption
\eqref{lipschitz_grad}  and without any separability property, the following  descent lemma for functions $f$:
\begin{align}
\label{lemma_descent_Nesterov}
 f(y) \leq f(x) + \langle \nabla f(x),y-x \rangle+\frac{N}{2} \|y-x\|_{W'}^2,
\end{align}
where the matrix $W'=\text{diag} \left (L_i  I_{n_i}; i \in [N]
\right)$, with  $L_i$ being the  Lipschitz constants such that $f$
satisfies \eqref{lipschitz_grad}. In   \cite{RicTak:12a},  under an
additional separability assumption on the function $f$, Nesterov's descent lemma \eqref{lemma_descent_Nesterov} was
generalized as follows:
\begin{equation}
 f(y) \leq f(x) + \langle \nabla f(x),y-x \rangle+\frac{\omega}{2} \|y-x\|_{W'}^2, \label{desc_lemma_richt}
\end{equation}
where $\omega$ is defined in Section \ref{sec_probdef}. In order to
be able to compare the convergence rates of our method with existing
convergence results we assume below that $L_{\neigh_j}$ and $L_i$
are of the same magnitude.

\noindent  \textbf{Sublinear convergence case:} Recall that the
sublinear convergence rate of our algorithm \textbf{(P-RCD)}, that
holds  under Assumption \ref{lip_fi}, is (see Theorem
\ref{theorem_sublinear}):
\begin{equation}
 \average [F(x^k)]-F^* \leq   \frac{2 N c }{\tau k+ N}
 \quad \forall k \geq 0, \label{pcdm_converg_2}
\end{equation}
{\bl where $c= \max \left \{ 1/2 (R_W(x^0))^2, F(x^0)-F^* \right
\}$. For $\tau=1$ we obtain a similar convergence rate to that of
the random coordinate descent method in \cite{Nes:12}, i.e. of order
${\cal O}(Nc/k)$,
while for $\tau=N$ we get  a similar convergence
rate to that of the full composite gradient method of \cite{Nes:13}. However,
the  distances are measured in different norms in these papers.
{\bl For example, when $\tau=N$ the comparison of convergence rate
in our paper and \cite{Nes:13} is reduced to comparing the
quantities $L_f R(x^0)^2$ of \cite{Nes:13} with our amount
$R_W(x^0)^2$, where $L_f$ is the Lipschitz constant of the smooth
component of the objective function, i.e. of $f$, while $R(x^0)$ is
defined in a similar fashion as our $R_W(x^0)$ but in the Euclidean
norm, instead of the norm $\|\cdot \|_W$. Let us consider the two
extreme cases. First, consider the smooth component of the objective
function:
$$f(x)=\sum_{j=1}^{\bar N} f_j(x_j),$$ i.e. smooth part is  fully separable.
Recall that we assume that each individual function $f_j$ is
Lipschitz continuous with a constant $L_{\neigh_j}$, as stated in
Assumption \ref{lip_fi}. In this case, it can be easily proven that
the Lipschitz constant of $f$ is $L_f= \max_{j  \in[\bar N]}
L_{\neigh_j}$. Thus considering the definition of the matrix $W$ in
Lemma \ref{lema_desc} we have that:
$$ L_f \|x^0-x^*\|^2 \geq \|x^0-x^*\|_W^2, $$ }
\noindent {\bl i.e. $L_f R(x^0)^2 \leq R_W(x^0)^2$ and our convergence rate is
usually  better. On the other hand, if we have $f$ defined as
follows:
$$f(x)=\sum_{j=1}^{\bar N} f_j(x),$$
then  it can be easily proven that  $L_f=\sum_{j \in \bar N}
L_{\neigh_j}$ and the quantities $L_f R(x^0)^2$ and $R_W(x^0)^2$
would be  the same. Thus, we get better rates of convergence when
$\bar{\omega} < \bar{N}$.}  {\bl Finally, we notice that our results are also  similar with those of \cite{RicTak:12a}, but are obtained under
further knowledge regarding the objective function and with a
modified analysis.  In particular,  in \cite{RicTak:12a} two algorithms  are proposed:
algorithms (PCDM1) and (PCDM2) which explicitly enforces
monotonicity. However, in practical large scale applications
algorithm (PCDM2) cannot be implemented due to the very large cost
per iteration, as  the authors also state in their paper.}  Thus,
using a similar reasoning as in
\cite{LuXia:13},  we can argue, based on our analysis, that the
expected value type of convergence rate given in \eqref{pcdm_conv_1}
is better than the one in \cite{RicTak:12a} under certain
separability properties as described below. First, the convergence
rate of the algorithm in the sublinear case, apart from essentially
being of order ${\cal O}\left( \frac{1}{k} \right )$, depends on the
quantity $c$, i.e. implicitly on the stepsizes involved when
computing the next iterate $x^{k+1}$, see \eqref{iter_pcdm2}. Thus,
in essence, the requirement is to find the smallest values for the
diagonal elements of matrix $W$ such that Lemma \ref{lema_desc} is
still valid. To this purpose, we consider the smooth component in
\eqref{sol_norm} in the form $f(x)=\frac{1}{2} \|Ax-b\|^2$.  For
this problem, consider basic block coordinate, i.e. $n_i=1$. Under
these considerations, we observe from the table below that our
stepsizes are better than those in \cite{RicTak:12a} as $\tau$
increases and $\bar{\omega} \ll \omega$. Thus, for certain cases,
our analysis can show improvement over the stepsizes  in
\cite{RicTak:12a} under the same sampling.
\begin{table}[!h]
\begin{center}
\setlength{\extrarowheight}{1.5pt}
 \begin{tabular}{|l|c|}
\hline Paper & $W_{ii}$ \\
\hline This paper & $\sum_{j: i \in \neigh_j} \sum_{t=1}^n A_{jt}^2$ \\
\hline \cite{RicTak:12a} & PCDM1: $\sum\limits_{j=1}^{\bar{N}} \! \min(\omega,\tau)
A_{ji}^2$ \;\; or \;\; PCDM2: $\sum\limits_{j=1}^{\bar{N}} \!\! \left (
1+\frac{(\omega-1)
(\tau-1)}{\max \{1,n-1\} } \right ) A_{ji}^2$ \\
\hline
\end{tabular}
\end{center}
\end{table}
}

\noindent {\bl Finally,  we   proceed  to compare the convergence of
our algorithm with the algorithms in \cite{RicTak:12a}.  Since in practical large scale applications algorithm (PCDM2) in \cite{RicTak:12a} cannot be implemented due to the very large cost per iteration, as  the authors also state in their paper,
in the sequel we consider algorithm (PCDM1) in \cite{RicTak:12a} under the $\tau$-uniform
sampling, which is similar with our sampling strategy, and for which
the authors of \cite{RicTak:12a} were able to derive rate of
convergence. In this case, both algorithms \textbf{(P-RCD)} and
(PCDM1) have similar costs per iteration.} In this setting, (PCDM1)
has the following sublinear convergence:
\begin{equation}
 \average [F(x^k)]-F^* \leq   \frac{2 N c' }{\tau k+ 2N c'(F(x^0) - F^*)^{-1}}
 \quad \forall k \geq 0,  \label{converg_Richt}
\end{equation}
where we define $\beta = \min(\omega,\tau)$ and
\begin{align}
& c'= \max \left \{ \beta  (R_{W'}(x^0))^2, F(x^0)-F^* \right \} \nonumber\\
& R_{W'}(x^0) = \max_{x: \; F(x)\leq F(x^0)} \min_{x^* \in X^*}
\|x-x^*\|_{W'}. \label{R_richt}
\end{align}
Consider that in both algorithms  we have that $F(x^0)-F^*$ is the
smallest term in the two maximums, i.e. $1/2(R_{W}(x^0))^2 \geq
F(x^0)-F^*$ and  $\beta (R_{W'}(x^0))^2 \geq F(x^0)-F^*$. Let us
make a comparison between  the two convergence rates,
\eqref{pcdm_converg_2}  and \eqref{converg_Richt} and note that this
comparison comes down to the comparison between the norms
$\|\cdot\|_W^2$ and the quantity $\beta \| \cdot \|_{W'}^2$.  From
the definitions on the norms we can express:
\begin{align*}
\|x\|_W^2&=\sum_{i=1}^N \left (\sum_{j \in \bar{\neigh_i}}
L_{\neigh_j} \right ) \|x_i\|^2 \\
\beta \|x\|_{W'}^2&= \sum_{i=1}^N \min(\omega,\tau)  L_i \|x_i\|^2.
\end{align*}

\noindent In  conclusion, the sublinear convergence rate of
algorithm  \textbf{(P-RCD)} is {\bl improved under our assumptions,
for similarly sized Lipschitz  constants and  for $\bar{\omega}  \ll
\omega \leq \tau$, i.e. for problems where we have at least a
function $f_j$ that depends on a large number of variables (i.e.
$\omega$ relatively large) but each variable does not appear in many
functions $f_j$ (i.e. $\bar{\omega}$ relatively small)}. Note that
this scenario was also considered at the beginning of the paper
since coordinate gradient descent type methods for solving problem
\eqref{gen_form} make sense only in the case  when $ \bar{\omega}$
is small, otherwise incremental type methods  \cite{WanBer:13}
should be considered for solving \eqref{gen_form}.

\vspace*{0.2cm}

\noindent \textbf{Linear convergence case:}   The authors of
\cite{Nes:12,Nec:13,NecNes:11,NecClip:13a,NecPat:12,RicTak:12a,LuXia:13}
also provide  linear convergence rate for their algorithms. A
straightforward comparison between the convergence rates in this
paper and of those in the papers mentioned above  cannot be done,
due to the fact the linear convergence in all these papers is proved
under the more conservative assumption of strong convexity, while
the convergence rate  of our algorithm \textbf{(P-RCD)} is obtained
under the more relaxed assumption of generalized error bound
property \eqref{EEBF} given in Definition \ref{error_bound}.
However, we can also consider $f$ to be strongly convex with a
constant $\sigma_{W}$ w.r.t. the norm $\|\cdot\|_W$. From Section
\ref{case1_strong} it follows that strongly convex functions are
included in our class of generalized error bound functions
\eqref{EEBF} with $\kappa_1 = \frac{2}{\sigma_W }$ and $\kappa_2=0$.
In this case we can easily prove that we  have the following linear
convergence of \textbf{(P-RCD)}:
\begin{equation}
\average \left [F(x^{k+1})  -F^* \right ] \leq
(1-\gamma_{sc}^{eb})^k \left (F(x^0) -F^* \right ),
\label{linear_second}
\end{equation}
where $\gamma^{eb}_{sc}=\frac{\tau \sigma_W}{N}$. Note that from
\eqref{desc_lemma} it follows that the Lipschitz constant of the
gradient is equal to $1$ and then combining \eqref{desc_lemma} with
\eqref{strongconvexity} we get $\sigma_W \leq 1$  w.r.t. the norm
$\|\cdot\|_W$. In this case, we notice that, given the choice of
$\tau$, we obtain different linear convergence results of order
${\mathcal O}(\theta^k)$. E.g., for $\tau=1$ we obtain a similar
linear convergence rate to that of the random coordinate descent
method in \cite{Nec:13,Nes:12,LuXia:13,RicTak:12}, i.e.
$\gamma^{eb}_{sc}={\mathcal O}(\sigma_W/N)$, while for $\tau=N$ we
get  a similar convergence rate to that of the full composite
gradient method of \cite{Nes:13}, i.e $\gamma^{eb}_{sc}={\mathcal
O}(\sigma_W)$. Finally, if we consider $f$ to be strongly convex in
the norm $\omega \|\cdot\|_{W'}$ with a constant $\sigma_{W'}$ and
under the same sampling strategy as considered here, then for
example the algorithm (PCDM1) in \cite{RicTak:12a}  has a
convergence rate:
\begin{equation}
\average \left [F(x^{k+1})  -F^* \right ]\leq (1-\gamma_{sc})^k \left (F(x^0) -F^* \right), \label{linear_ric}
\end{equation}
where $\gamma_{sc}=\frac{\tau \sigma_{W'}}{N + \omega \tau}$. Thus,
the comparison of convergence rates in this case  reduces to the
comparison of $\gamma_{sc}^{eb}$ and $\gamma_{sc}$.  {\bl Then, if
$\bar{\omega}$ is sufficiently small and $\omega$ is sufficiently
large,  we get that $\gamma_{sc}^{eb} \geq \gamma_{sc}$ and the
linear  convergence rate \textbf{(P-RCD)} is an improvement over
that of \cite{RicTak:12a}}. Thus,  we found again that under some
degree of sparsity (i.e. $ \bar{\omega} \ll \omega$)  our results
are superior to those in \cite{RicTak:12a}.

\noindent Our convergence results are also more general than the
ones in \cite{Nes:12,Nec:13,NecPat:12,RicTak:12a,LuXia:13}, in the
sense that we can show linear convergence of algorithm
\textbf{(P-RCD)} for larger classes of problems. For example, up to
our  knowledge the best global convergence rate results known  for
gradient type methods   for  solving optimization problems of the
form  dual formulation of a linearly constrained convex problem
\eqref{conv_conjug} or general constrained lasso \eqref{sol_norm}
were of the sublinear form ${\cal O}\left( \frac{1}{k^2} \right)$
\cite{Nes:13,NecNed:13}.  In this paper we show for the first time
\textit{global} linear convergence  rate for random coordinate
gradient descent methods for solving this type of problems
\eqref{sol_norm} or \eqref{conv_conjug}. Note that for the
particular case of least-square problems $ \min_x \| Ax - b\|^2$ the
authors in \cite{LevLew:08}, using also an error bound like
property, were able to show linear convergence for a random
coordinate gradient descent method. Our results can be viewed as a
generalization of the results from \cite{LevLew:08} to more general
optimization problems. Further, our approach allows us to analyze in
the same framework several methods: full gradient, serial coordinate
descent and any parallel coordinate descent method in between.


\section{Numerical simulations}
In this section we present  some preliminary numerical results on
solving constrained lasso problems in the form  \eqref{sol_norm}.
The individual constraint sets $X_i \subseteq \rset^{n_i}$ are box
constraints, i.e. $X_i=\left \{x_i : lb_i \leq x_i \leq ub_i \right
\}$. The regularization parameters $\lambda_i$ were chosen uniform
for all components, i.e. $\lambda_i = \lambda$ for all $i$. The
numerical experiments were done for two instances of the
regularization parameter $\lambda=1$ and $\lambda=10$. The numerical
tests were conducted on a machine with 2 Intel(R) Xeon(R) E5410 quad
core CPUs  @ 2.33GHz  and $8GB$ of RAM. The matrices $A \in \rset^{m
\times n}$ were randomly generated in Matlab and have a high degree
of sparsity (i.e. the measures of partial separability $\omega,
{\bar \omega} \ll n$).

\noindent In the first experiment we solve a single randomly
generated constrained lasso problem with matrix $A$ of dimension
$m={\bar N} = 0.99*10^6$ and $n=N = 10^6$. In this case the two
measure of separability  have the  values: $\omega=37$  and $\bar
\omega = 35$. The problem was solved on $\tau=1, 2, 4$ and $7$ cores
in parallel using MPI for $\lambda=10$.
  From  Fig. 8.1 and 8.2 we can observe that for each $\tau$ our algorithm needs almost the
  same number of coordinate updates $\frac{\tau k}{n}$ to solve the problem.
  On the other hand increasing the number of cores reduces substantially the
  number of iterations  $\frac{k}{n}$.
\begin{figure}[ht]
\centering
\begin{minipage}[b]{0.45\linewidth}
\includegraphics[width=\textwidth]{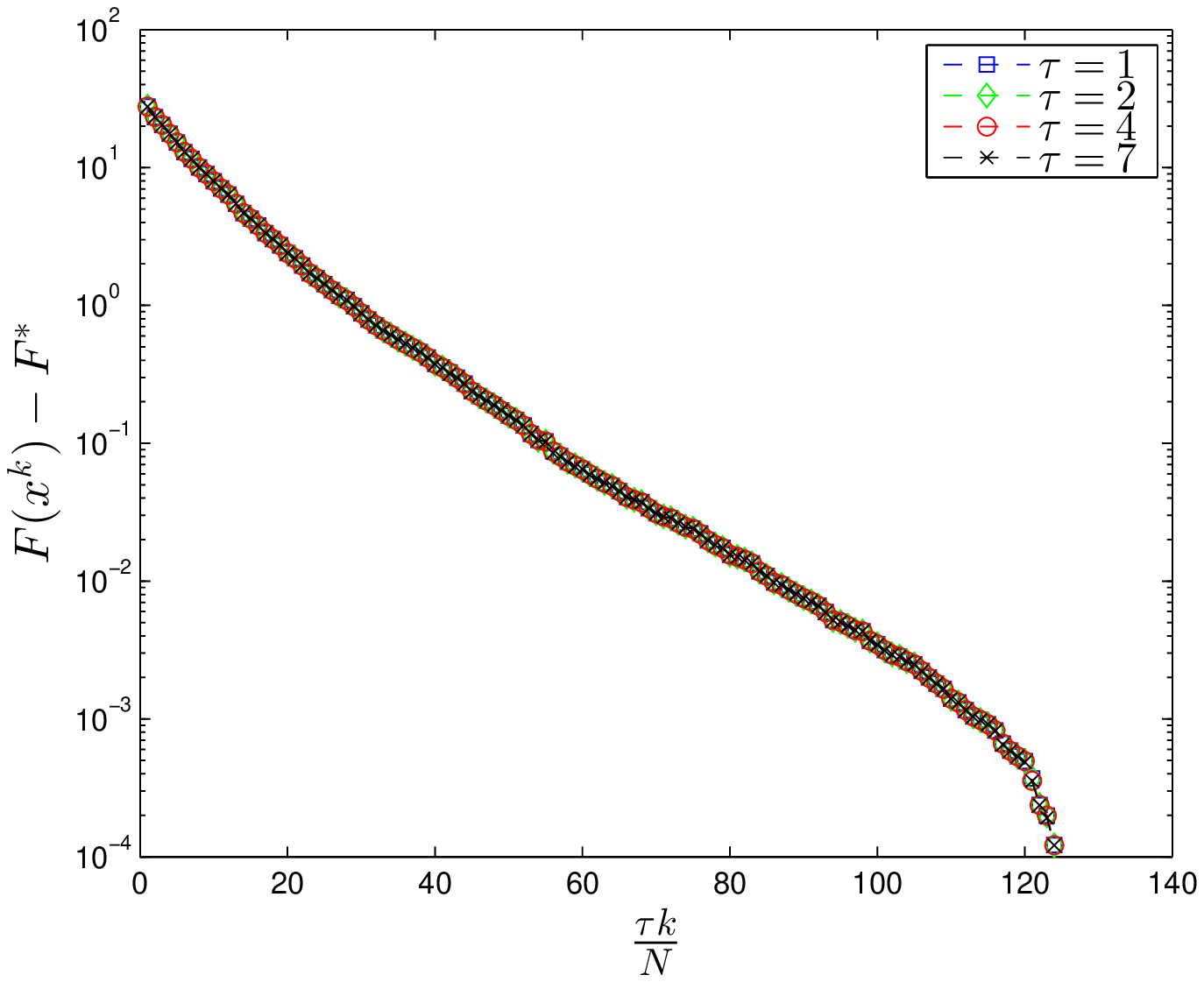}
\caption{Evolution of $F(x^k)-F^*$ along  coordinate updates normalized $\frac{\tau k}{n}$.}
\label{decrease:taukn}
\end{minipage}
\quad
\begin{minipage}[b]{0.45\linewidth}
\includegraphics[width=\textwidth]{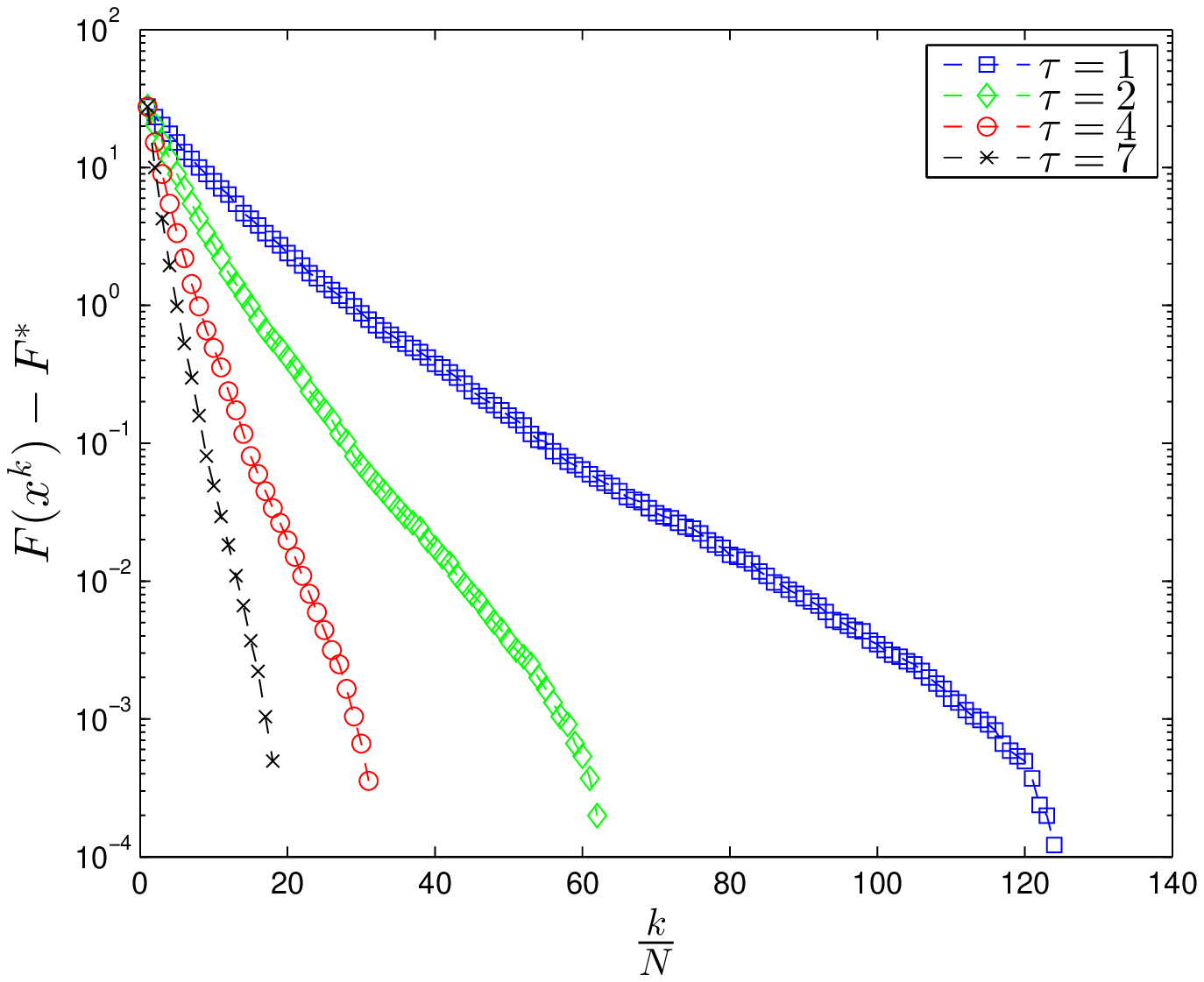}
\caption{ Evolution of $F(x^k)-F^*$ along  iterations normalized $\frac{k}{n}$. }
\label{decrease:kn}
\end{minipage}
\end{figure}
\begin{figure}[htb]
\begin{center}
 \includegraphics[scale=0.35]{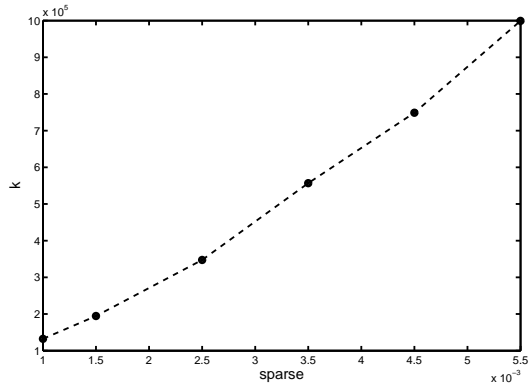}
\caption{ Average number of iterations for alg. \textbf{(P-RCD)} for
each  $10$ randomly generated problems with varying sparsity. }
\label{graph_10_4average}
\end{center}
\end{figure}

\noindent Then, in Fig. \ref{graph_10_4average} for each $10$
randomly generated problems with varying sparsity ranging from
$10^{-3}$ to $6*10^{-3}$  and  dimension $m={\bar N} = 0.9*10^4$ and
$n=N = 10^4$ we plot the average number of iterations. We consider
$\tau=50$  and $\lambda=10$.  We observe that the number of
iterations increases once the sparsity decreases.
\begin{table}[!htb]
\begin{center}
\setlength{\extrarowheight}{0.5pt}
\begin{footnotesize}
 \begin{tabular}{|c|c|r|r|c|c|>{\hspace{-5pt}}c<{\hspace{-5pt}}|>{\hspace{-5pt}}c<{\hspace{-5pt}}|>{\hspace{-5pt}}c<{\hspace{-5pt}}|}
\hline
  $\lambda$  & $n$ & $m$ & $sparse$ & $\bar{\omega}$ & $\omega$ & $ \tau k_{\text{(P-RCD)}} /n$ & $\tau k_{\text{\cite{RicTak:12a}}}/n$ & $F^*$ \\
 \hline \multirow{15}{*}{1}     & \multirow{5}{*}{$10^4$}  & $0.90 \times 10^ 4$ & $2 \times 10^{-3}$  & 35 & 38 & 177 & 279 & 2420.107  \\
        \cline{3-9}    &  & $0.98 \times 10^ 4$  & $3 \times 10^{-3}$  & 55 & 52 & 274 & 432 & 2379.622  \\
        \cline{3-9}    &  & $0.94 \times 10^ 4$  & $4 \times 10^{-3}$  & 64 & 63 & 418 & 605 & 1985.261  \\
         \cline{3-9}   &   & $1    \times 10^ 4$ & $4 \times 10^{-3}$  & 71 & 66 & 364 & 556 & 2422.455  \\
        \cline{3-9}    &   & $1.03 \times 10^ 4$ & $4 \times 10^{-3}$  & 68 & 69 & 397 & 635 & 2307.750  \\

\hhline{|~|=|=|=|=|=|=|=|=|} & \multirow{5}{*}{$10^5$} & $0.97 \times 10^ 5$  & $1.3 \times 10^{-4}$  & 31 & 32 & 111 & 193 & 27768.840  \\
         \cline{3-9}     &  & $0.91 \times 10^ 5$  & $1.5 \times 10^{-4}$  & 34 & 36 & 128 & 238 & 25918.885  \\
         \cline{3-9}     &  & $0.93 \times 10^ 5$  & $2 \times 10^{-4}$  & 43 & 41 & 167 & 285 & 25860.573  \\
         \cline{3-9}     &  & $1    \times 10^ 5$  & $2 \times 10^{-4}$  & 41 & 42 & 162 & 280 & 26894.849  \\
         \cline{3-9}     &  & $1.046 \times 10^ 5$  & $2 \times 10^{-4}$  & 42 & 42 & 161 & 270 & 28405.369  \\

\hhline{|~|=|=|=|=|=|=|=|=|} & \multirow{5}{*}{$10^6$}  & $0.98 \times 10^ 6$  & $1.5 \times 10^{-5}$  & 40 & 38 & 119 & 207 & 287300.02  \\
         \cline{3-9}         &              & $0.91 \times 10^ 6$  & $1.7 \times 10^{-5}$              & 34 & 36 & 144 & 235 & 251255.96  \\
         \cline{3-9}         &              & $0.99 \times 10^ 6$  & $2 \times 10^{-5}$                & 43 & 44 & 109 & 229 & 227031.21   \\
         \cline{3-9}         &              & $1    \times 10^ 6$  & $2 \times 10^{-5}$                & 46 & 43 & 101 & 187 & 273215.09   \\
         \cline{3-9}         &              & $1.046 \times 10^ 6$  & $2 \times 10^{-5}$               & 51 & 53 & 99 &  182 & 239189.71   \\

\hhline{|=|=|=|=|=|=|=|=|=|} \multirow{15}{*}{10}  & \multirow{5}{*}{$10^4$} &
$0.98 \times 10^ 4$ & $2 \times 10^{-3}$  & 39 & 42 & 24 & 38 & 4884.610  \\
        \cline{3-9}    &  & $0.96 \times 10^ 4$  & $3 \times 10^{-3}$  & 51 & 52 & 38 & 62 & 4762.226  \\
        \cline{3-9}    &  & $0.92 \times 10^ 4$  & $4 \times 10^{-3}$  & 64 & 70 & 52 & 85 & 4477.707  \\
        \cline{3-9}   &   & $1    \times 10^ 4$ & $4 \times 10^{-3}$  & 65 & 65 & 57 & 81 & 4909.406  \\
        \cline{3-9}    &   & $1.02 \times 10^ 4$ & $4 \times 10^{-3}$  & 68 & 68 & 51 & 83 & 4922.320  \\

\hhline{|~|=|=|=|=|=|=|=|=|} & \multirow{5}{*}{$10^5$}  & $0.92 \times 10^ 5$  & $1.3 \times 10^{-4}$  & 34 & 31 & 13 & 28 & 46066.411  \\
         \cline{3-9}     &  & $0.95 \times 10^ 5$  & $1.5 \times 10^{-4}$  & 32 & 35 & 16 & 33 & 47770.23  \\
         \cline{3-9}     &  & $0.91 \times 10^ 5$  & $2 \times 10^{-4}$  & 40 & 46 & 23 & 43 & 45520.275  \\
         \cline{3-9}     &  & $1    \times 10^ 5$  & $2 \times 10^{-4}$  & 42 & 43 & 23 & 43 & 49808.196  \\
         \cline{3-9}     &  & $1.09 \times 10^ 5$  & $2 \times 10^{-4}$  & 46 & 43 & 22 & 41 & 54370.699  \\

\hhline{|~|=|=|=|=|=|=|=|=|} & \multirow{5}{*}{$10^6$} & $0.9 \times 10^ 6$   & $1.5 \times 10^{-5}$  & 35 & 37 & 14 & 26 & 449548.04  \\
         \cline{3-9}         &                 & $0.91 \times 10^ 6$  & $1.7 \times 10^{-5}$          & 41 & 40 & 18 & 33 & 467529.31  \\
         \cline{3-9}         &                 & $0.97 \times 10^ 6$  & $2 \times 10^{-5}$            & 42 & 43 & 22 & 42 & 452739.23  \\
         \cline{3-9}         &                         & $1    \times 10^ 6$  & $2 \times 10^{-5}$    & 43 & 44 & 17 & 36 & 426963.31  \\
         \cline{3-9}         &                         & $1.1 \times 10^ 6$  & $2 \times 10^{-5}$     & 48 & 43 & 19 & 39 & 442936.02  \\
\hline 
 \end{tabular}
\end{footnotesize}
\caption {Comparison of algorithms \textbf{(P-RCD)} and (PCDM1) of
\cite{RicTak:12a}.} \label{tabel_sim}
\end{center}
\end{table}




\noindent In the second set of experiments, provided in  Table
\ref{tabel_sim},  the dimension of matrix $A$ ranges as follows:
$m={\bar N}$ from $0.9*10^4$ to $1.1*10^6$ and $n=N$ from $10^4$ to
$10^6$.  For the resulting problem  our objective function satisfies
the generalized error bound property \eqref{EEBF} given in
Definition \ref{error_bound} and in some cases it is even strongly
convex. This  series of numerical tests were undertaken in order to
compare the full number of iterations of the algorithm {\bl under
the original assumptions considered in \cite{RicTak:12a}  and the  ones
considered for the algorithm in this paper}.

\noindent In these simulations
the algorithms were implemented in a centralized manner, i.e.  there
is no inter-core transmission of data,  with the number of updates
per iteration of $\tau=100$ in each case.  {\bl In both cases the
algorithms were allowed to reach the same optimal value $F^*$ which
is presented in the last column  and was computed with the serial
($\tau=1$) random coordinate descent method.} The second and third
column of the table represent the dimensions of matrix $A$. The
fourth column represents the degree of sparsity which
 dictates that the total number of nonzero
elements in the matrix $A$ is less than or equal to $n \times m
\times sparse$. The fifth and sixth columns denote the degrees of
partial separability $\bar{\omega}$ and $\omega$, while the seventh
and eighth columns represent the total number of coordinate updates
normalized that the algorithms completed. {\bl  As it can be
observed from Table \ref{tabel_sim}, algorithm  \textbf{(P-RCD)}
outperforms (PCDM1) of \cite{RicTak:12a} even in the case where
$\bar{\omega}$ and $\omega$ are of similar size or equal}. Moreover,
note that between the problems where $m$ is slightly larger than
$n$, i.e. where the resulting objective function $F(x)$ is  strongly
convex, and the problems where $m$ is slightly smaller than $n$,
i.e. where $F(x)$ is not strongly convex but satisfies our
generalized error bound property \eqref{EEBF}, the number of
iterations of algorithm \textbf{(P-RCD)} is comparable. In
conclusion, given that the constrained lasso problems of the form
\eqref{sol_norm} satisfy the generalized error bound property
\eqref{EEBF}, the theoretical result that linear convergence of
algorithm \textbf{(P-RCD)} is attained under the generalized
error bound property is confirmed also in practice. \vspace{1cm}

\bibliographystyle{unsrt}
\bibliography{bibliografie2013}

\begin{thebibliography}{10}

\bibitem{NecClip:13a}
I.~Necoara and D.~Clipici.
\newblock Efficient parallel coordinate descent algorithm for convex
  optimization problems with separable constraints: application to distributed
  mpc.
\newblock {\em Journal of Process Control}, 23(3):243--253, 2013.

\bibitem{NecSuy:09}
I.~Necoara and J.A.K. Suykens.
\newblock An interior-point lagrangian decomposition method for separable
  convex optimization.
\newblock {\em Journal of Optimization Theory and Applications},
  143(3):567--588, 2009.

\bibitem{Bis:06}
C.M. Bishop.
\newblock {\em Pattern Recognition and Machine Learning}.
\newblock Springer-Verlag, 2006.

\bibitem{WitFra:06}
I.H. Witten, E.~Frank, and M.A. Hall.
\newblock {\em Data Mining: Practical Machine Learning Tools and Techniques}.
\newblock Elsevier, New York, 2011.

\bibitem{RicTak:13}
P.~Richtarik and M.~Takac.
\newblock Distributed coordinate descent method for learning with big data.
\newblock Technical report, Univ. Edinburgh, Oct. 2013.

\bibitem{BecTet:13}
A.~Beck and L.~Tetruashvili.
\newblock On the convergence of block coordinate descent type methods.
\newblock {\em SIAM Journal on Optimization}, 23(4):2037--2060, 2013.

\bibitem{HonWan:13}
M.~Hong, X.~Wang, M.~Razaviyayn, and Z-Q. Luo.
\newblock Iteration complexity analysis of block coordinate descent methods.
\newblock Technical report, University of Minnesota, USA, 2013.
\newblock http://arxiv.org/abs/1310.6957.

\bibitem{TseYun:09}
P.~Tseng and S.~Yun.
\newblock A coordinate gradient descent method for nonsmooth separable
  minimization.
\newblock {\em Mathematical Programming}, 117(1--2):387--423, 2009.

\bibitem{Tse:01}
P.~Tseng.
\newblock Convergence of a block coordinate descent method for
  nondifferentiable minimization.
\newblock {\em Journal of Optimization Theory and Applications},
  109(3):475--494, 2001.

\bibitem{Nes:12}
Y.~Nesterov.
\newblock Efficiency of coordinate descent methods on huge-scale optimization
  problems.
\newblock {\em SIAM Journal on Optimization}, 22(2):341--362, 2012.

\bibitem{Nec:13}
I.~Necoara.
\newblock Random coordinate descent algorithms for multi-agent convex
  optimization over networks.
\newblock {\em IEEE Trans. Automatic Control}, 58(8):2001--2012, 2013.

\bibitem{NecNes:11}
I.~Necoara, Y.~Nesterov, and F.~Glineur.
\newblock A random coordinate descent method on large optimization problems
  with linear constraints.
\newblock Technical report, University Politehnica Bucharest, July 2011.

\bibitem{NecPat:12}
I.~Necoara and A.~Patrascu.
\newblock A random coordinate descent algorithm for optimization problems with
  composite objective function and linear coupled constraints.
\newblock {\em Computational Optimization and Applications}, 57(2):307--337,
  2014.

\bibitem{Nes:13}
Y.~Nesterov.
\newblock Gradient methods for minimizing composite objective functions.
\newblock {\em Mathematical Programming}, 140(1):125--161, 2013.

\bibitem{RicTak:12}
P.~Richtarik and M.~Takac.
\newblock Iteration complexity of randomized block-coordinate descent methods
  for minimizing a composite function.
\newblock {\em Mathematical Programming}, 144:1--38, 2014.

\bibitem{LuXia:13}
Z.~Lu and L.~Xiao.
\newblock On the complexity analysis of randomized block-coordinate descent
  methods.
\newblock Technical report, 2013.
\newblock http://arxiv.org/abs/1305-4723.

\bibitem{LuoTse:93}
Z.Q. Luo and P.~Tseng.
\newblock Error bounds and convergence analysis of feasible descent methods: a
  general approach.
\newblock {\em Annals Operations Research}, 46(1):157--178, 1993.

\bibitem{RicTak:12a}
P.~Richtarik and M.~Takac.
\newblock Parallel coordinate descent methods for big data optimization.
\newblock Technical report, Univ. Edinburgh, Dec. 2012.

\bibitem{PenYan:13}
Z.~Peng, M.~Yan, and W.~Yin.
\newblock Parallel and distributed sparse optimization.
\newblock Technical report, Rice University, USA, 2013.

\bibitem{BraKyr:11}
J..K Bradley, A.~Kyrola, D.~Bickson, and C.~Guestrin.
\newblock Parallel coordinate descernt for $l_1$-regularized loss minimization.
\newblock {\em ICML}, 2011.

\bibitem{TakBij:13}
M.~Takac, A.~Bijral, P.~Richtarik, and N.~Srebro.
\newblock Mini-batch primal and dual methods for svms.
\newblock Technical report, Univ. Edinburgh, March 2013.

\bibitem{RamNed:09}
S.~Sundhar Ram, A.~Nedic, and V.V. Veeravalli.
\newblock Incremental stochastic subgradient algorithms for convex
  optimization.
\newblock {\em SIAM Journal on Optimization}, 20(2):691--717, 2009.

\bibitem{NiuRec:12}
F.~Niu, B.~Recht, C.~Re, and S.~Wright.
\newblock Hogwild!: A lock-free approach to parallelizing stochastic gradient
  descent.
\newblock {\em NIPS}, 2012.

\bibitem{WanBer:13}
M.~Wang and D.~P. Bertsekas.
\newblock Incremental constraint projection-proximal methods for nonsmooth
  convex optimization.
\newblock Technical report, MIT, July 2013.

\bibitem{Nes:04}
Y.~Nesterov.
\newblock {\em Introductory Lectures on Convex Optimization: A Basic Course}.
\newblock Kluwer, Boston, USA, 2004.

\bibitem{RyaSup:10}
S.~Ryali, K.~Supekar, D.~A. Abrams, and V.~Menone.
\newblock Sparse logistic regression for whole-brain classiﬁcation of fmri
  data.
\newblock {\em NeuroImage}, 51(2):752--764, 2010.

\bibitem{YuaCha:10}
G.X. Yuan, K.W. Chang, C.J. Hsieh, and C.J. Lin.
\newblock A comparison of optimization methods and software for large-scale
  $l_1$-regularized linear classification.
\newblock {\em Journal of Machine Learning Research}, 11:3183--3234, 2010.

\bibitem{JamPau:13}
G.M. James, C.~Paulson, and P.~Rusmevichientong.
\newblock The constrained lasso.
\newblock Technical report, University of Southern California, 2013.

\bibitem{CheNg:12}
X.~Chen, M.~K. Ng, and C.~Zhang.
\newblock Non-lipschitz $\ell_p$ -regularization and box constrained model for
  image restoration.
\newblock {\em IEEE Transactions on Image Processing}, 21(12):4709--4721, 2012.

\bibitem{RocWet:98}
R.T. Rockafellar and R.J. Wets.
\newblock {\em Variational Analysis}.
\newblock Springer-Verlag, 1998.

\bibitem{WanLin:13}
P.W. Wang and C.J. Lin.
\newblock Iteration complexity of feasible descent methods for convex
  optimization.
\newblock Technical report, National Taiwan University, 2013.

\bibitem{MaZha:13}
S.~Ma and S.~Zhang.
\newblock An extragradient-based alternating direction method for convex
  minimization.
\newblock Technical report, Chinese University of Hong Kong, January 2013.

\bibitem{Man:85}
O.L. Mangasarian.
\newblock Computable numerical bounds for lagrange multipliers of stationary
  points of non-convex differentiable non-linear programs.
\newblock {\em Operations Research Letters}, 4(2):47--48, 1985.

\bibitem{Rob:73}
S.~M. Robinson.
\newblock Bounds for error in the solution set of a perturbed linear program.
\newblock {\em Linear Algebra and its Applications}, 6:69--81, 1973.

\bibitem{NecNed:13}
I.~Necoara and V.~Nedelcu.
\newblock Rate analysis of inexact dual first order methods: application to
  dual decomposition.
\newblock {\em IEEE Transactions on Automatic Control}, 59(5):1232--1243, 2014.

\bibitem{LevLew:08}
D.~Leventhal and A.S. Lewis.
\newblock Randomized methods for linear constraints: Convergence rates and
  conditioning.
\newblock Technical report, Cornell University, 2008.
\newblock http://arxiv.org/abs/0806.3015.

\end{thebibliography}
\thispagestyle{empty} \pagestyle{empty}

\end{document}